\newcounter{algsubstate}
\renewcommand{\thealgsubstate}{\alph{algsubstate}}
\newenvironment{algsubstates}
  {\setcounter{algsubstate}{0}%
   \renewcommand{\State}{%
     \stepcounter{algsubstate}%
     \Statex {\footnotesize\thealgsubstate:}\space}}
\numberwithin{equation}{section}
\numberwithin{figure}{section}
\patchcmd{\thebibliography}{\chapter*}{\section*}{}{}
\newtheorem{thm}{Theorem}[section]
\newtheorem{lemma}{Lemma}[section]
\newtheorem{rem}{Remark}[section]
\newcommand{\commentout}[1]{{}} 
\newcommand{\Frac}[2]{\frac{\textstyle #1}{\textstyle #2}}
\newcommand{\abs}[1]{\left|#1\right|}
\newcommand{\norm}[1]{\left\|#1\right\|}
\newcommand{\bfb}{{\bf b}}
\newcommand{\bfc}{{\bf c}}
\newcommand{\bfF}{{\bf F}}
\newcommand{\bfn}{{\bf n}}
\newcommand{\bfu}{{\bf u}}
\newcommand{\bfalpha}{{\boldsymbol \alpha}}
\newcommand{\bfdelta}{\boldsymbol{\delta}}
\newcommand{\bfgamma}{\boldsymbol{\gamma}}
\begin{document}
\title{A Fixed Mesh Method With Immersed Finite Elements \\
for Solving Interface Inverse Problems \thanks{This research was partially supported  by GRF 15301714/15327816
of HKSAR, and Polyu AMA-JRI}\\
}
\author{
Ruchi Guo \thanks{Department of Mathematics, Virginia Tech, Blacksburg, VA 24061 (ruchi91@vt.edu) }
\and Tao Lin \thanks{Department of Mathematics, Virginia Tech, Blacksburg, VA 24061 (tlin@vt.edu) } \\
\and Yanping Lin \thanks{Department of Applied Mathematics, Hong Kong Polytechnic University, Kowloon, Hong
Kong, China (yanping.lin@polyu.edu.hk)} \\
  }
\date{}
\maketitle

\begin{abstract}
We present a new fixed mesh algorithm for solving a class of interface inverse problems for the typical elliptic interface problems. These interface inverse problems are formulated as shape optimization problems whose objective functionals depend on the shape of the interface.
Regardless of the location of the interface, both the governing partial differential equations and the objective functional are discretized optimally,
with respect to the involved polynomial space, by an immersed finite element (IFE) method on a fixed mesh. Furthermore, the formula for the gradient of the descritized objective function is derived within the IFE framework that can be computed accurately and efficiently through the discretized adjoint procedure. Features of this proposed IFE method based on a fixed mesh are demonstrated by its applications to three representative interface inverse problems: the interface inverse problem with an internal measurement on a sub-domain, a Dirichlet-Neumann type inverse problem whose data is given on the boundary, and a heat dissipation design problem. \\

{\bf Keywords}: Inverse problems, Interface problems, Shape optimization, Discontinuous coefficients, Immersed finite element methods.

\end{abstract}

\section{Introduction}
In this article, we present a numerical method for solving a class of interface inverse problems with a fixed mesh by an immersed finite element (IFE) method. Without loss of generality, let $\Omega$ be a domain separated by an interface $\Gamma$ into two subdomains $\Omega^-$ and $\Omega^+$ each occupied by a different material represented by a piecewise constant function $\beta$ discontinuous across $\Gamma$. We consider a group of $K$ forward interface boundary problems posed on the domain
$\Omega$ for the typical second order elliptic equation:
\begin{equation}
\begin{split}
\label{inter_prob_0}
&-\nabla\cdot(\beta\nabla u^{k}) = f^{k}, ~~\textrm{in} \; \Omega^-\cup\Omega^+, \\
&u^{k}=g_D^{k}, ~~\textrm{on}\; \partial\Omega_D^k \subseteq \partial \Omega,~~\frac{\partial u^{k}}{\partial\mathbf{ n}}=g_N^{k}, ~~\textrm{on}\; \partial\Omega_N^k \subseteq \partial \Omega,
\end{split}~~~~~~\text{for~~} k = 1, 2, \cdots, K,
\end{equation}
where $\overline{\partial\Omega_N^k} \cup \overline{\partial\Omega_D^k} =\partial \Omega$ and $\bfn$ is the outward normal of $\partial \Omega$, together with the jump conditions on the interface $\Gamma$:
\begin{eqnarray}
&&\begin{split}
\label{jump_cond_0}
&[u^{k}]|_{\Gamma}:=u^{k,+}-u^{k, -}=0, \\
&[\beta\nabla u^{k} \cdot\mathbf{ n}]|_{\Gamma}:=\big(\beta^+\nabla u^{k,+}-\beta^-\nabla u^{k,-}\big)\cdot\mathbf{ n}=0, \text{~~$\bfn$ is the normal of $\Gamma$},
\end{split}~~~~~~1 \leq k \leq K, \\
&&\text{in which~~~}u^{k,s} = u^k|_{\Omega^s}, ~\beta(X) = \beta^s \text{~~for~~} X \in \Omega^s, ~s = -, +. \label{beta}
\end{eqnarray}
%

An important inverse problem related to the typical second order elliptic equation is to identify the coefficient $\beta$ where one needs to either identify the physical properties of materials, i.e., the values (the {\em parameter estimation} problem) and/or detect the location and shape of inclusions/interfaces (the {\em inverse geometric} problem) using the data measured for $u^{k},~1 \leq k \leq K$ on a subset of the domain or on a subset of the boundary $\partial \Omega$ \cite{1991ChowAnderssen,2013Hegemann,2001ItoKunischLi}. This type of inverse problems arise from many applications in engineering and sciences, such as the electrical impedance tomography (EIT) \cite{2006Calderon,2005HolderDavid} and groundwater or oil reservoir simulation \cite{1983Ewing,1986Yeh}. In the former case, $u^{k}$s and $\beta$ represent the electrical potential and the conductivity, respectively, whereas in the latter case $u^{k}$s and $\beta$ are the piezometric head and transmissivity, respectively. Similar inverse problems related to other partial differential equations also appear in applications, we refer readers to \cite{2008HogeaDavatzikos,2003JiMcLaughlin} for medical imaging problems, \cite{2000LeeParkPark,1992SchnurZabaras} for elasticity problems, and references therein. It is well known that these inverse problems are usually ill-posed especially when the available data is rather limited. Numerical methods based on the output-least-squares formulation are commonly used to handle these types of inverse problems, see
\cite{2003ChanTai,1999ChenZou,1991ChowAnderssen,2013Hegemann,1990ItoKunisch} and references therein.

\commentout{
In the past decades, many works on the parameter estimation problems have been published. Some of these works main focus only on recovering the coefficient values,
and a commonly used approach for such as a case is the variational method, which involves the output-least-squares method, the augmented Lagrangian method \cite{1999ChenZou,2009GockenbachKhan,1990ItoKunisch,2007TaiLi}, and various regularization techniques such as the total variation and Tikhonov regularization \cite{1992CattLionsMorel,2003ChanTai,1985KravarisSeinfeld}.
\commentout{
Roughly speaking, instead of assuming the coefficients take one single value on each sub domain, this group of methods minimize an misfit function quantifying the error between the computed solutions and the observation data in a chosen norm based on the discretization of the coefficients over the entire domain. Regularization techniques can be used to avoid some undesirable results or to handle random errors in data.
}
However, as demonstrated in \cite{2002AmeurChavent,2003ChanTai,1991ChowAnderssen,2013Hegemann} and the references therein, these methods can be adopted to recover not only the values of coefficients but also the discontinuities, i.e., they can solve the parameter estimation problems and the inverse geometric problems simultaneously.
}


In many engineering and science applications, the values of material properties or parameters are known or chosen such as the elastic properties of tissue and bone in medical problems \cite{2010McLaughlinZhang,2011PeregoVeneziani} and the electrical properties in EIT problems \cite{1995Alessandrini,2013BelhachmiMeftahi} to mention just a couple of applications. Thus, the focus of this article is to develop an efficient numerical method based on a fixed mesh for the inverse geometric problem related to the forward interface problem described by \eqref{inter_prob_0} and \eqref{jump_cond_0} in which we assume that the material values $\beta^s
= \beta|_{\Omega^s}, s =-, +$ are known priori and we need to use given measurements about $u$ to recover the location and geometry
of the material interface $\Gamma$.

A widely used approach for an inverse geometric problem is the shape optimization method \cite{2003HaslingerMkinen,2013NovotnSokoowski} by which we seek for the interface $\Gamma^*$ from an optimization problem:
\begin{equation}
\label{interf_optimiz}
\Gamma^* = \textrm{argmin} \; \mathcal{J}(u^{1}(\Gamma), u^{1}(\Gamma), \cdots, u^{K}(\Gamma), \Gamma),
\end{equation}
where
\begin{equation}
\label{ObjFun_4_interf_optimiz}
\mathcal{J}(u^{1}(\Gamma), u^{2}(\Gamma), \cdots, u^{K}(\Gamma), \Gamma) = \int_{\Omega_0} J(
u^{1}(\Gamma), u^{2}(\Gamma), \cdots, u^{K}(\Gamma); X, \Gamma) dX,
\end{equation}
and $u^{k}(\Gamma)$s are the solutions to the forward interface problems \eqref{inter_prob_0} and \eqref{jump_cond_0}, but $\Omega_0 \subseteq \Omega$ and \\ $J(u^{(1)}(\Gamma), u^{(2)}(\Gamma), \cdots, u^{(K)}(\Gamma); X, \Gamma)$ are application dependent, a few specific formulations of $J$ are given in Section \ref{sec:Applications} for a chosen group of representative applications. We note that the shape optimization approach has been applied to numerous applications,
see for example \cite{2013BelhachmiMeftahi,
2005BURGEROSHER,
2010HuangXie,
2004LiStevenXie,
2004MohammadiBijan}.
\commentout{
{\color{blue}The shape optimization approach has been applied to numerous applications such as
the EIT problems \cite{2013BelhachmiMeftahi}, the identification of obstacles \cite{2011BurgerMatevosyan}, and inverse
geometric problems in linear elasticity \cite{2005BURGEROSHER}. In addition, the shape optimization is also an important technique in shape designing problems in engineering such as aerodynamical applications \cite{2004MohammadiBijan}, heat dissipation designing \cite{2004LiStevenXie}, and elasticity compliance designing \cite{2010HuangXie}.}
}

The movement of the structure, boundary or interface is a critical issue in a shape optimization process challenging a solver chosen for the related forward problems. Traditional finite element methods can be used to obtain accurate solutions to the forward interface problems provided that they use a body fitting (or interface conforming) mesh \cite{2014AllaireDapogny,2007CarpentieriKoren}; otherwise, their performance may not be satisfactory
\cite{2000BabuskaOsborn,1998ChenZou}.
The shape optimization methods based on body fitting mesh are referred as the Lagrangian approach \cite{1994ChoiChang} which, however, has a few drawbacks.
The first concerns the mesh updating process from one iteration to the next in the optimization. As the geometry changes, to guarantee the accuracy, the mesh used by a chosen solver for the forward problem needs to be updated to fit the new shape of the boundary or interface \cite{1995Bendose,1991SuzukiKikuchi}, which
not only consumes time but also generates unsatisfactory meshes in many situations, see the illustrations in Figure \ref{fig:interface_independent_mesh}
where the two plots on the left demonstrate an inappropriate mesh movement strategy leading to a mesh with less desirable qualities, especially near the right edge.
\commentout{
{\color{blue}Several approaches were proposed to overcome this issue, such as the mesh parameterization \cite{1985BENNETTBOTKIN,1984BraibantFleury,1994TortorelliTomasko}. But these approaches are difficult to handle the large shape changes, which often appear in shape optimization \cite{1985BENNETTBOTKIN,1989YaoChoiKyung} and could cause excessive mesh distortion and consequently inaccurate finite element solutions.}
}

\begin{figure}[H]
\label{fig_mesh}
  \centering
     \subfigure[The initial body fitting mesh]{
    \label{body_fitting_mesh_0} 
    \includegraphics[width=1.2in]{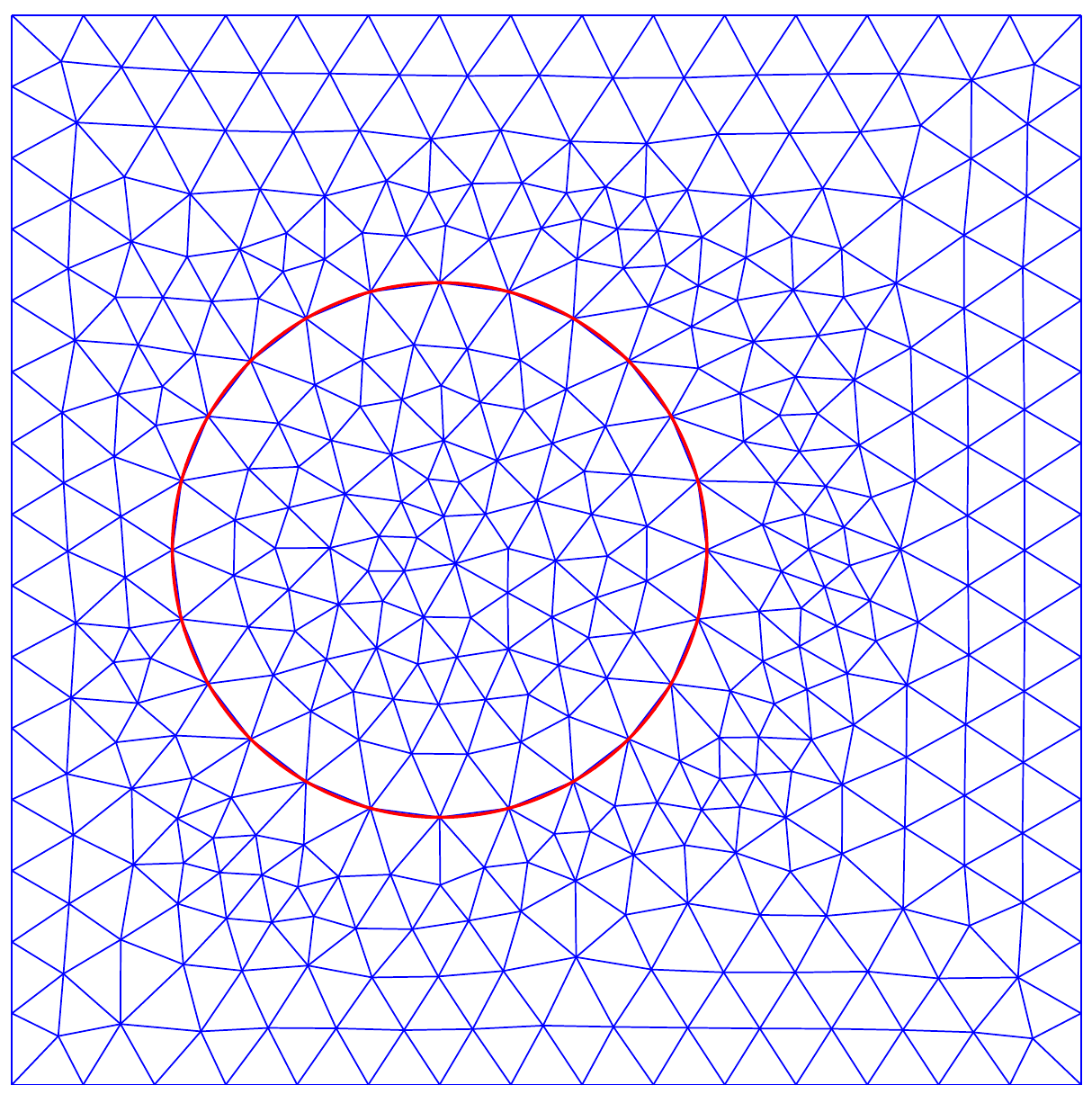}}
    \subfigure[The body fitting mesh after movement]{
    \label{body_fitting_mesh_1} 
    \includegraphics[width=1.2in]{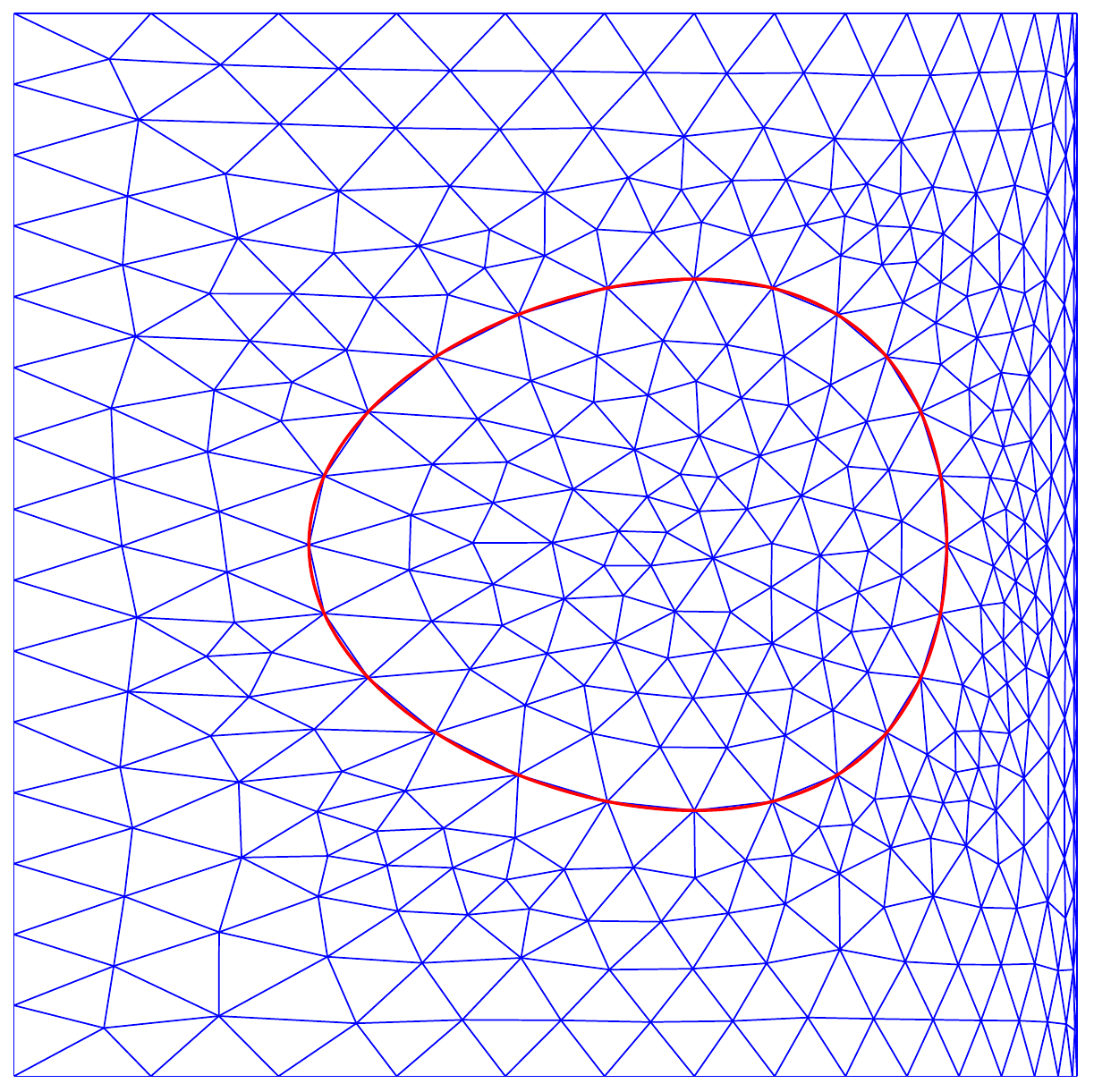}}
    \subfigure[The interface independent mesh]{
    \label{fig_bodyfitting_mesh} 
    \includegraphics[width=1.2in]{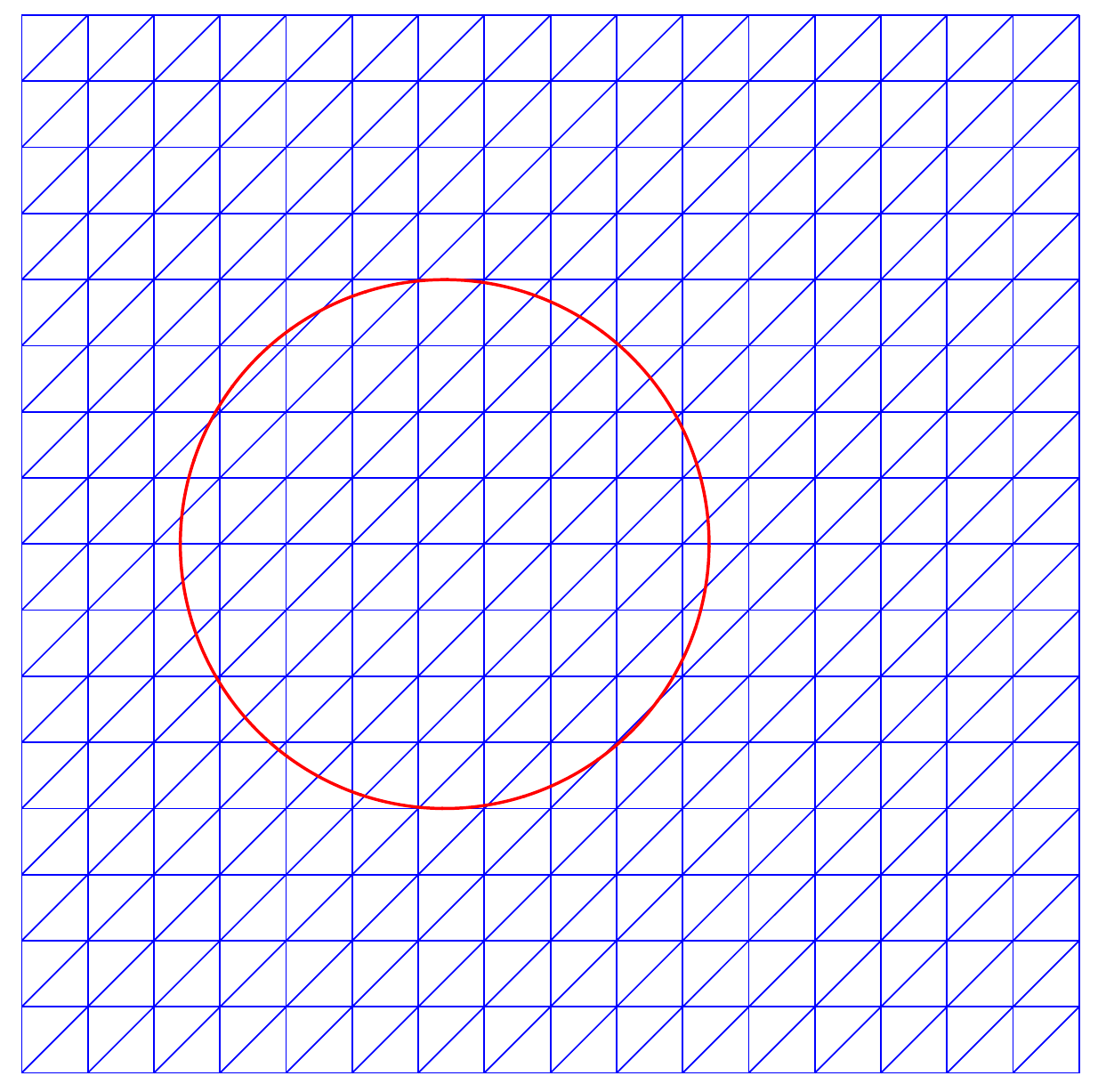}}
  \subfigure[The interface independent mesh]{
    \label{fig_unbodyfitting_mesh} 
    \includegraphics[width=1.2in]{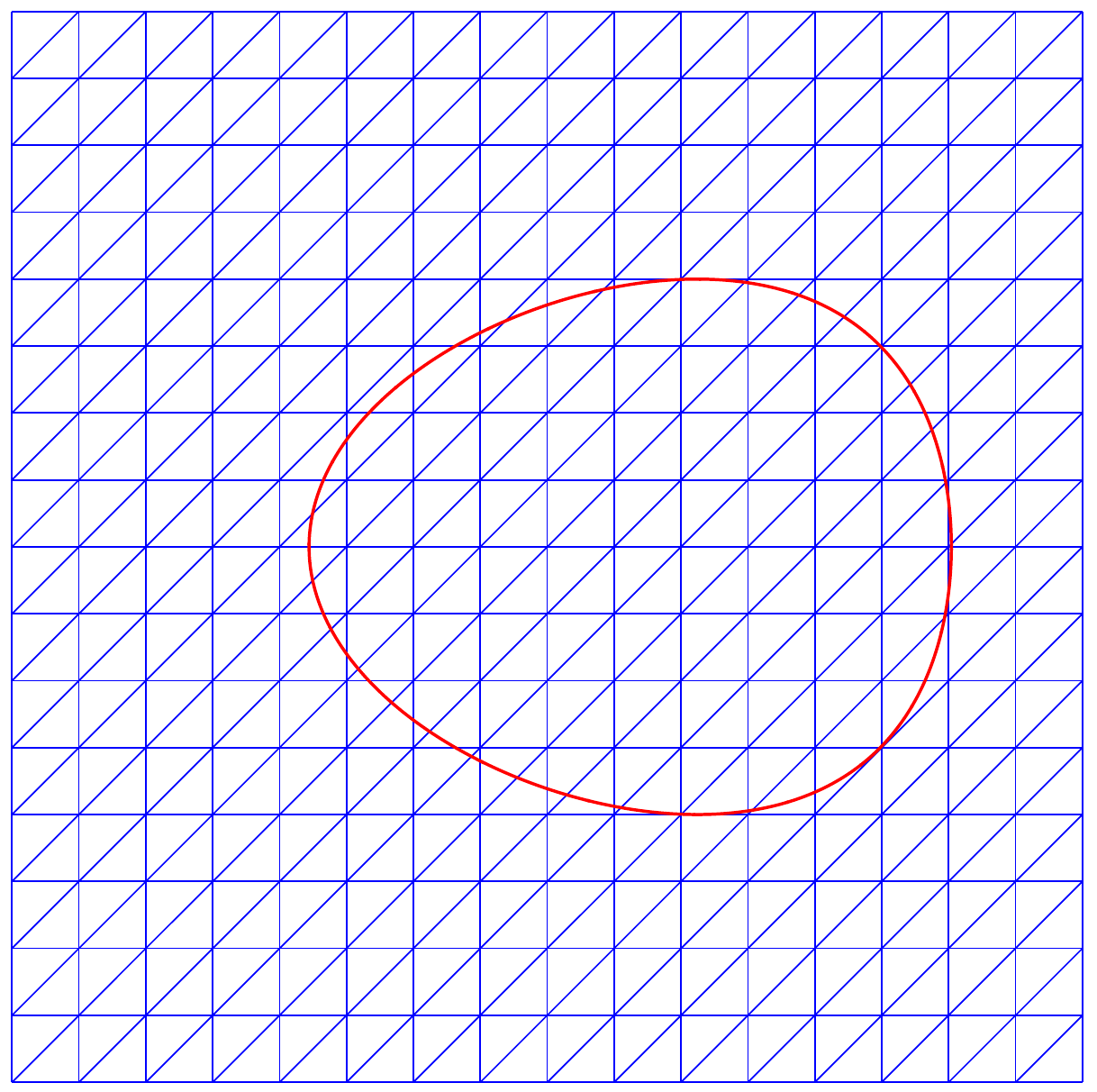}}
  \caption{The body-fitting and interface independent mesh }
  \label{fig:interface_independent_mesh} 
\end{figure}


The sensitivity analysis in a shape optimization is about the derivatives of an objective function with respect to the design variables, i.e., parameters describing the geometry of the domain \cite{1995Bendose,2003HaslingerMkinen}, and this leads to the gradient of the objective function which is a necessary ingredient in common numerical optimization algorithms such as descent direction methods and trust region methods \cite{J.E.Dennis_Robert.B.Schnabel_1996,J.Nocedal_S.Wright_2006}. A velocity field defined as derivatives of node coordinates with respect to the design variables \cite{1994ChoiChang,2004RdenasFuenmayor} is usually employed in the sensitivity analysis. Several approaches for computing the velocity in a Lagrangian framework are summarized in \cite{1994ChoiChang}, which either require computations to be carried out over the whole domain or need some special numerical methods for generating the velocity approximately.

\commentout{
First, the nodes in a body fitting mesh depend on the interface/shape parameters, but it is difficult, if not impossible, to find or construct a formula to describe such a dependence because the mesh is usually produced by an automatic mesh generator, and most, if not all, general mesh generators are not based on any explicit mathematical rules \cite{1994ChoiChang}, requiring such a formula to be differentiable is an even more excessive demand. Furthermore, under the assumption that each node of a body fitting mesh has to move in general as the shape/interface changes in the Lagrangian framework, the sensitivity has to be computed for the pertinent quantities, such as the local stiffness matrix and local load vector, on each elements in the mesh, and this inevitably leads to a global velocity field in the Lagrange framework and a high computational cost especially when the number of interface/design variables is large.

In the Lagrangian framework, the velocity field is highly related to mesh updating procedure in the shape optimization process. According to \cite{1994ChoiChang}, the velocity field needed in the sensitivity analysis should be employed to move the finite element mesh and it should have a few
desirable features for maintaining the topology and quality of the mesh. Indeed, {\color{red}it was pointed out in \cite{1994ChangChoi,1988HouSheen} that an inappropriate choice of velocity field for mesh update will result in a mesh that cannot guarantee the accuracy of the finite element solution}, see the illustration in Figure \ref{fig:body_fitting_mesh} which demonstrates that, with an improper choice of the velocity field, the elements near the right boundary have questionable quality. The authors in \cite{1994ChoiChang} summarized three basic approaches to compute a suitable velocity field: (1) finite difference methods, (2) isoparametric mapping methods and (3) boundary displacement and fictitious load methods. Except for the isoparametric mapping method, the other two lead to
an approximate velocity field to be globally computed over the whole solution domain. The isoparametric mapping method needs spacial decomposition of the computational domain which results in extra difficulties for complicated geometry, and it can not always guarantee mesh quality {\cite{1994ChoiChang}.}
}

Alternatively, the Eulerian approaches based on fixed meshes have been widely used in the shape optimization algorithms \cite{2002KimQuerin,2005KimChang}, which allow the material interface/boundary to cut elements as illustrated in the two plots on the right in Figure \ref{fig:interface_independent_mesh}.
For example, the extended finite element methods (XFEM) based on a fixed mesh are used to solve optimal design problems problems \cite{2012ZhangZhangZhu,2012SoghratiAragn,2008LuoWangWangWei,2015NajafiSafdari} and the inverse geometric problems related to crack detection
\cite{2013NanthakumarLahmer,2007RabinovichGivoli,2009WaismanChatzi}; the immersed interface methods (IIM) based on
a Cartesian mesh is employed to solve an cavity (rigid inclusion) detection problem in \cite{2001ItoKunischLi}.
Various techniques for improving the accuracy of the evaluation of either the stiffness matrix or sensitivity on those boundary/interface elements in Eulerian methods
have been discussed in \cite{2004AllaireJouveToader,2011DunningPeterKim,2005JangWonYoon}.

\commentout{
Solvers based on fixed meshes, such as the extended finite element methods (XFEM)
\cite{2013NanthakumarLahmer,2007RabinovichGivoli,2009WaismanChatzi,2012ZhangZhangZhu,2012SoghratiAragn,2008LuoWangWangWei,2015NajafiSafdari} and the immersed interface methods (IIM) \cite{2001ItoKunischLi}, have been employed to deal with the related forward problems in Eulerian methods.

based on fixed meshes are involved such as the extended finite element methods (XFEM) or the immersed interface methods (IIM). The application of XFEM can be found in \cite{2012ZhangZhangZhu,2012SoghratiAragn,2008LuoWangWangWei,2015NajafiSafdari} for optimal design problems and in \cite{2013NanthakumarLahmer,2007RabinovichGivoli,2009WaismanChatzi} for inverse geometric problems concentrated on crack detection. Besides, IIM was applied to an cavity (rigid inclusion) detection problem in \cite{2001ItoKunischLi}.
}

\commentout{
{\color{blue}In these methods, an optimization model is placed in a fictitious domain with a fixed mesh
whose edges are allowed to be cut by the boundary of this optimization model.} Various techniques are suggested to improve the accuracy of the evaluation of either the stiffness matrix or sensitivity on those boundary elements in \cite{2004AllaireJouveToader,2011DunningPeterKim,2005JangWonYoon}. In \cite{2010PengWangXing,2012ZhangZhangZhu}, a fixed mesh approach based on the extended finite element method(XFEM) \cite{2010FriesBelytschko,1999MoesDolbowBelytschko,2012SoghratiAragn} was proposed to handle either the boundary elements or interface elements. This idea was generalized in \cite{2015NajafiSafdari} to optimize the interface between multi-material using an interface-enriched finite element method (IGFEM) \cite{2012SoghratiAragn}, where the extra degrees of freedoms are added at the intersection points of the interface with mesh edges.
}

The goal of this article is to develop a fixed mesh method based on the partially penalized immersed finite element (PPIFE) method \cite{2015LinLinZhang} for solving the inverse geometric problems/optimal design problems described by \eqref{inter_prob_0}-\eqref{ObjFun_4_interf_optimiz}. A key motivation is that
IFE methods can solve interface forward problems with interface independent meshes optimally with respect to the degree of the involved polynomial spaces. In an IFE method, the interface shape/location and the jump conditions across the interface are utilized in the local IFE shape functions on interface elements, while the standard local finite element spaces are used on non-interface elements. Consequently, the convergence rates, optimal in the sense of the polynomials employed in the involved IFE spaces, have been established for IFE methods on interface independent meshes. We refer readers to IFE spaces constructed with linear polynomials \cite{2004LiLinLinRogers,2003LiLinWu}, with bilinear polynomial \cite{2008HeLinLin,2001LinLinRogersRyan}, and with rotated-$Q_1$ polynomials \cite{2016GuoLinZhang,2013ZhangTHESIS}. Applications of IFE methods to other types of equations or jump conditions can be found in \cite{2015AdjeridChaabaneLin,2011HeLinLin,2012LinZhang,2015ChenwuXiao}.

The advantages of the proposed IFE method for the inverse geometric problem are multifold. Because it is based on an interface independent fixed mesh in the shape optimization process, the issues caused by the mesh regeneration/movement, mesh distortion caused by large geometry changes, as well as some practical and theoretical issues for the construction of the velocity field \cite{1994ChoiChang} are circumvented, see the plots (c) and (d) in Figure \ref{fig:interface_independent_mesh} for an illustration. When the numerical interface curve $\Gamma$ is expressed as a parametric curve whose control points are the design variables for the shape optimization,
the fixed mesh used in the proposed IFE method allows us to develop velocity fields and shape derivatives of IFE shape functions that are advantageous for efficient implementation because they all vanish outside interface elements whose total number is of the order $O(h^{-1})$ on a shape regular mesh compared to the total number of elements in the order of $O(h^{-2})$. In addition, the formulas for the velocity fields and shape derivatives in the propose IFE method can be implemented
precisely without any further approximation procedures. Also, the IFE discretization for the related forward problem leads to an objective function that is optimal with respect to the polynomials employed in the underline finite element space regardless of
the location of the interface to be optimized. Furthermore, using the IFE discretization we are able to derive formulas for the gradient with respect to the design variables for the objective function and these formulas can be efficiently executed within the IFE framework. These benefits together with the fact that no need to remesh again and again in the shape optimization demonstrate a very strong potential of the proposed IFE method.

\commentout{
The accuracy of this objective function in this algorithm
is maintained optimal in this fixed mesh regardless of the location of the interface. We will derive formulas for
computing the gradient of this objective function and show that all of these formulas can be efficiently and accurately computed with the IFE
discretization on a chosen fixed mesh of $\Omega$.
}

\commentout{
{\color{blue}As for details in later sections}, using an IFE method in the shape optimization for solving the inverse geometric problem with a fixed mesh
is advantageous, below are some of its benefits.

\begin{itemize}
\item
Because the IFE method allows us to use an interface independent fixed mesh in the shape optimization process, the issues caused by the mesh regeneration and mesh distortion from large geometry changes are circumvented, see illustrations in Figure \ref{fig:interface_independent_mesh} compared with Figure \ref{fig:body_fitting_mesh}. With a fixed mesh, the velocity field does not need to guide the mesh movement; hence, some practical and theoretical issues for the construction of the velocity field \cite{1994ChoiChang} are simply avoided.

\begin{figure}[H]
\label{fig_mesh}
  \centering
  \subfigure{
    \label{fig_bodyfitting_mesh} 
    \includegraphics[width=2.5in]{interface_independent_mesh_0-eps-converted-to.pdf}}
  \hspace{0.01in}
  \subfigure{
    \label{fig_unbodyfitting_mesh} 
    \includegraphics[width=2.5in]{interface_independent_mesh_1-eps-converted-to.pdf}}
  \caption{The interface independent mesh }
  \label{fig:interface_independent_mesh} 
\end{figure}

\item
Using a fixed mesh in the shape optimization process admits a velocity field for the sensitivity that can be computed efficiently and accurately. With a fixed mesh, as the interface updates in the optimization, the interface-mesh intersection points are changing but all the mesh nodes are fixed. Consequently only the points in interface elements should be considered to move according to the interface, and this allows a velocity field that is zero outside interface elements. Moreover, we can derive a function that explicitly relates the points inside interface elements and the design variables of the shape optimization. Thus, the velocity field defined as the derivatives of this function can be computed efficiently by explicit formula rather than any numerical approximation to be globally carried out over the
whole domain. Also, the proposed velocity field has the $H^1$ regularity.

\item
Since the mesh is the same in one step and the next of the iterative optimization process, the IFE discretization of the governing equations \eqref{inter_prob_0} maintains the same algebraic structure because the number of the degrees of freedom and their locations in the domain do not change, and this feature
facilitates the derivation of computation procedures for sensitivity.
Moreover, assuming the interface does not evolve too drastically, the accuracy of an IFE discretization on a fixed mesh can always remain optimal regardless of
the location of the interface. Of course, no need to regenerate the mesh over and over again in the iterative optimization process helps to reduce the computational cost.

\item
Using an IFE method in the discretization of the shape optimization for solving the inverse geometric problem enables efficient and accurate computations for the
sensitivity. {\color{blue}First, almost all of the computations for the sensitivity of either the objective functional or the IFE discretization of the governing
interface forward problems are carried out only over interface elements because both the velocity field constructed in this framework and the shape derivatives of the IFE basis functions vanish over non-interface elements}. This feature drastically decreases the computational cost for sensitivity since the number of the interface elements is only $\mathcal{O}(h^{-1})$ compared with $\mathcal{O}(h^{-2})$ for the total number of elements. Furthermore, the proposed framework with an IFE method allows us to derive formulas for the calculations of all the derivatives with respect to the design variables in the sensitivity analysis either by the standard adjoint method and/or the shape derivatives of IFE basis functions.
\commentout{
Importantly, in this fixed mesh framework based on an IFE method, we have the formula for the shape derivatives of the IFE basis functions because they are locally decoupled on each interface element into the velocity field at interface-mesh intersection points and the derivatives of the IFE shape functions with respect to the coordinates of the interface-mesh intersection points for which the explicit formulas are readily available.
}
\end{itemize}

We note that the first two groups of advantages listed above are from the fact that an IFE method works with an interface independent fixed mesh and other fixed mesh methods may also have these advantages. Nevertheless, features in the last two groups are the consequences of the IFE formulation which allow us not only to
derive a discrete objective function in terms of the design variables that is an accurate approximation on a fixed mesh to the objective functional of the inverse geometric problem, but also to obtain formulas that can be efficiently executed in the IFE framework for accurately computing the gradient of this discrete objective function. As demonstrated by numerical examples for a set of representative inverse geometric problems in Section 4, these features can greatly benefit a successful
application of a typical numerical methods, especially those based on the quasi-Newton descent direction, to the minimization of the objective function in the proposed
algorithm for solving the inverse geometric problems.

}

This article is organized as follows. The next section recalls the linear IFE space and the related PPIFE scheme for the interface forward problems.
Section 3 presents the shape optimization algorithm based on the IFE discretization on a fixed mesh of $\Omega$ for the inverse geometric problem described by \eqref{inter_prob_0}-\eqref{ObjFun_4_interf_optimiz} and the computation procedure for its sensitivity. In Section 4, we demonstrate the strength and versatility of the proposed IFE method by applying it to three
representative interface inverse problems.

\commentout{
The third section is for the sensitivity computation. We start from derivation of the formulas for the derivatives of interface-mesh intersection points with respect to the design variables, which are used for either computing the shape derivatives of the IFE basis functions and constructing the non-zero \textit{velocity field} on interface elements. And to avoid theoretical difficulties, we discretize the equation \eqref{inter_prob_0} by the IFE spaces and then differentiate the system with respect to the design variables based on a discrete adjoint method \cite{2000GilesPierce,2007JamesonJameson}. Then we use the standard material derivative formula and the designed \textit{velocity field} to compute the derivatives of the local stiffness matrices and local edge penalty matrices with respect to the design variables. In the last section, we provide a bunch of numerical examples to test the efficiency and accuracy of the proposed algorithm on the different inverse and design problems, including the output-least-squares problems, the EIT problems and heat dissipation problems using different target curves.
}

\commentout{
{\color{red}
We can try to use these sentences later: \\

In this article, only recovering the location and geometry of the interface $\Gamma$ is of our interest; hence the values of $\beta^{i}$, $i=1,2$, are assumed to be apriori information. And Therefore here we shall assume

is a positive piecewise constant. And we utilize the shape optimization technique to reconstruct the interface where the discontinuities between coefficient $\beta$ occurs. For simplicity, in the discussion of the section \ref{review_ife} and \ref{sensitivity}, we only consider the case that $\Omega$ is partitioned by one interface into two sub-domains $\Omega^-$ and $\Omega^+$, but the proposed algorithm can be applied to any number of interfaces in implementation.

When the jump conditions \eqref{jump_cond_0} are explicitly imposed in related forward problem, the inverse geometric problem poses an additional challenging issue for traditional discretization methods of the involved PDEs because they usually require the mesh to be generated compatible to the discontinuities of the coefficients so that the mesh has be generated over an over again
in the optimization.

of the traditional discretization methods. In \cite{2001ItoKunischLi}, the interface was represented by a level set method

 and used the immersed interface method(IIM) on a Cartersian mesh to reconstruct the interface by the data available in a vicinity to the boundary. And a more challenging case is that the data is only available on whole or even part of the boundary. In \cite{1997HettlichRundell,1998HettlichRundell}, by only using the measurements around the boundary, the authors explicitly represented and solved the Fr\'echet derivatives through a transmission problem with specific interface jump conditions, which are used to for gradient computation in optimization process to reconstruct the interface and the support sets of the source term.

The parameterization of the interface and computation of the derivative of the objective functional $\mathcal{J}$ with respect to the geometry of the interface such as the shape and topology are two
main ingredients in the optimization of $\mathcal{J}$.

The IFE method can enable us to have these advantages in all the three steps for any shape optimization problems. Furthermore we shall emphasize that the proposed framework is a uniform algorithm for different objective functions in any application.
}
}

\section{An IFE Method for the Interface Forward Problems}
\label{review_ife}

In this section we recall the linear IFE method \cite{2016GuoLin,2004LiLinLinRogers} for the discretizatoin of the interface forward problem described by \eqref{inter_prob_0} and \eqref{jump_cond_0} with an interface independent mesh. The following notations will be used throughout this article. We let $\Gamma(t, \bfalpha), t \in [0,1]$ be a parametrization of the interface $\Gamma$ with design variables as entries in the vector $\bfalpha=(\alpha_i)_{i\in\mathcal{D}}$ where $\mathcal{D}$ is the index set of the chosen design variables. For example, when $\Gamma(t, \bfalpha)$ is a cubic spline, $\bfalpha$ is the vector of all the coordinates of control points \cite{2012Walter}. Let $\mathcal{T}_h$ be an interface independent triangular mesh of the domain $\Omega$. An element $T\in\mathcal{T}_h$ will be called an interface element if its interior intersects the interface $\Gamma(t, \bfalpha)$; otherwise we call it a non-interface element. Let $\mathcal{T}^i_h$ ($\mathcal{E}^i_h$) and $\mathcal{T}^n_h$ ($\mathcal{E}^n_h$) be the sets of interface and non-interface elements (edges), respectively. Denote the set of the interior interface edges by $\mathring{\mathcal{E}}^i_h$. And let $\mathcal{N}_h=\{ X_1, X_2,\cdots, X_{|\mathcal{N}_h|} \}$ and $\mathring{\mathcal{N}}_h$ be the sets of all the nodes in the mesh and the interior nodes, respectively.

For each element $T = \triangle A_1A_2A_3 \in\mathcal{T}_h$, we let $\mathcal{I}=\{1,2,3\}$ and let $\psi^{non}_{i,T}, i = 1, 2, 3$ be the standard linear shape functions \cite{2008BrennerScott} such that $\psi^{non}_{i,T}(A_j)=\delta_{ij}, ~i,j\in\mathcal{I}$. The local IFE space on each $T \in \mathcal{T}_h^n$ is
\begin{equation}
\label{non_inter_loc_space}
S_h(T)=\textrm{Span}\{\psi^{non}_{i,T},~i\in\mathcal{I} \}=\mathbb{P}_1.
\end{equation}
On an interface element $T = \triangle A_1A_2A_3\in\mathcal{T}^i_h$, we let $P=(x_P,y_P)^T$ and $Q=(x_Q,y_Q)^T$ be the two interface-mesh intersection points, and let $l$ be the line connecting $P$ and $Q$. The normal vector for the line $l$ is
$
\bar{\mathbf{ n}}=\frac{1}{||P-Q||}(y_P-y_Q,-(x_P-x_Q))^T
$
and the equation for the line $l$ is $L(X) = 0$ with
$
L(X)=\bar{\mathbf{ n}}\cdot(X-P).
$
The line $l$ cuts the element into two sub-elements $\overline{T}^+$ and $\overline{T}^-$, see the sketch on the left in Figure \ref{interface_element}, and we use them to introduce another two index sets $\mathcal{I}^-=\{i:A_i\in \overline{T}^-\}$ and $\mathcal{I}^+=\{i:A_i\in \overline{T}^+ \}$.
\begin{figure}[H]
\centering
\includegraphics[width=1.6in]{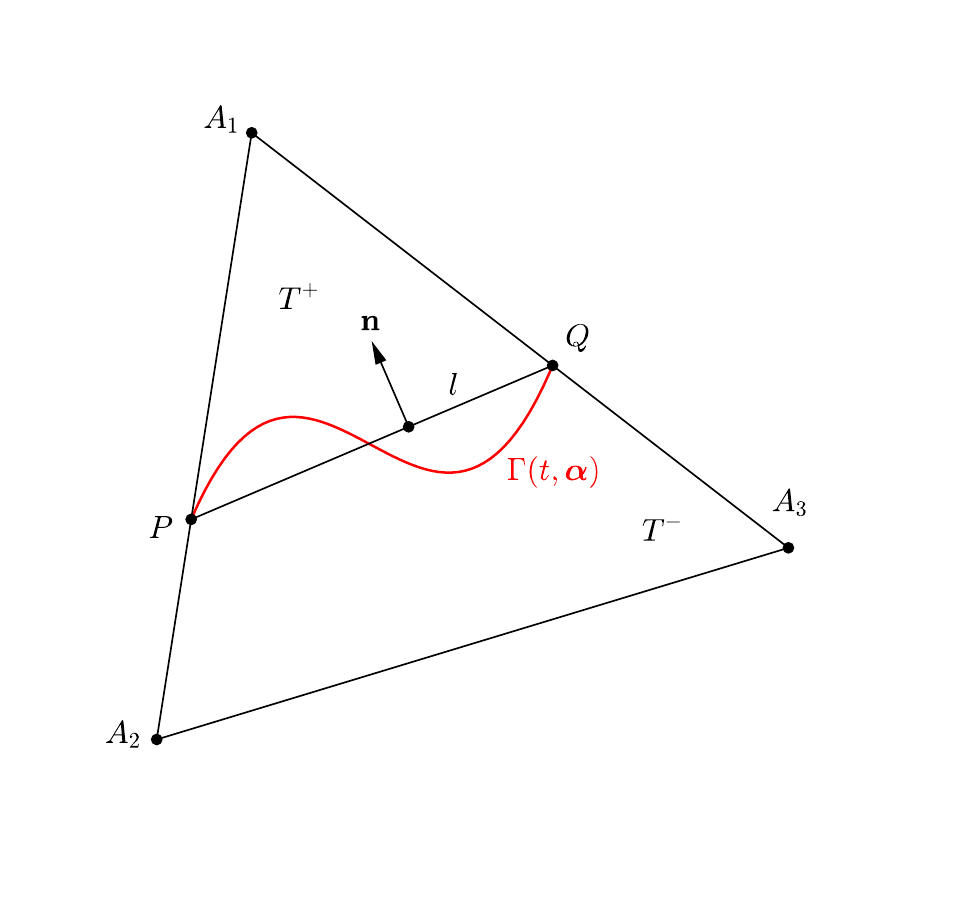}~~~~~~
\includegraphics[width=4.2in]{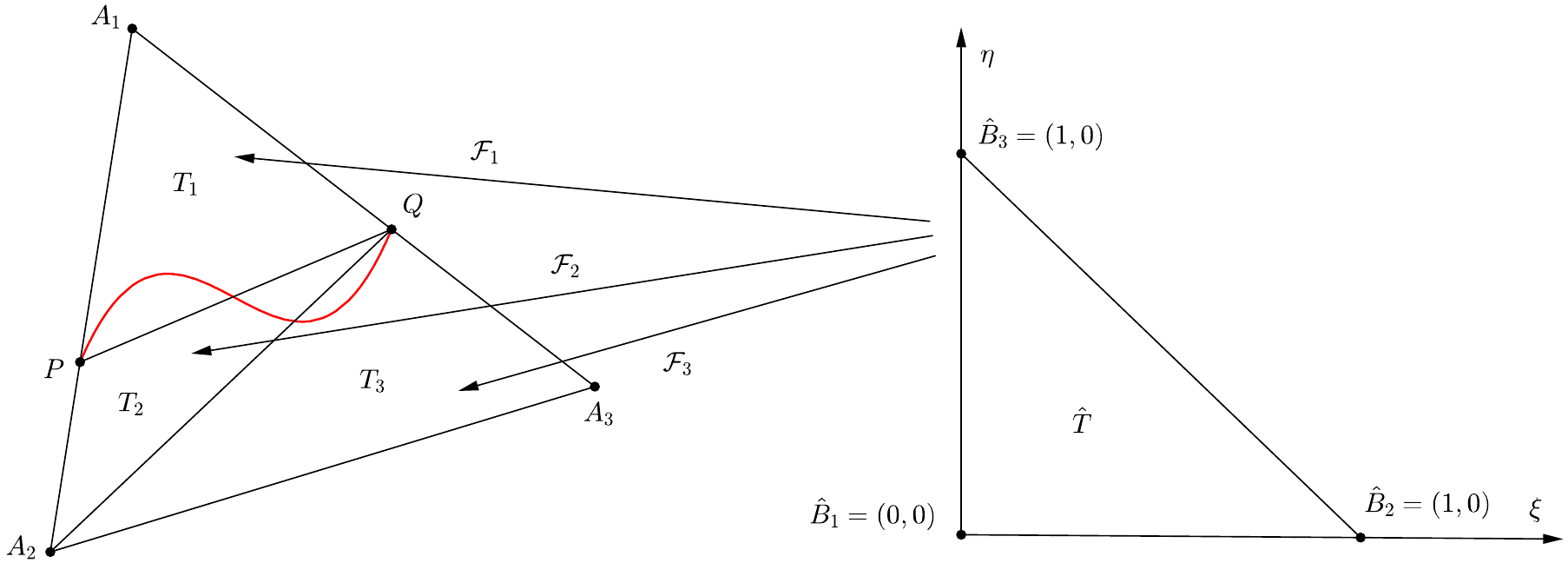}
\caption{ An interface element and its partitions.}
\label{interface_element}
\end{figure}
According to \cite{2016GuoLin}, the linear IFE function constructed according to the interface jump condition \eqref{jump_cond_0} and nodal values
$\mathbf{ v}=(v_1,v_2,v_3)$ has the following formula
\begin{align}
&\psi^{int}_T(X) =
\begin{cases}
 \psi^{int,-}_T(X)  = \psi^{int,+}_T(X)+c_0L(X) & \text{if} \;\; X\in \overline{T}^-, \\
 \psi^{int,+}_T(X)  = \sum_{i\in\mathcal{I}^-}c_i\psi^{non}_{i,T}(X)+\sum_{i\in\mathcal{I}^+}v_i\psi^{non}_{i,T}(X)& \text{if} \;\; X\in \overline{T}^+,
\end{cases} \label{ife_shapefun} \\
\text{with~~~}&c_0=
\left( \frac{\beta^+}{\beta^-}-1 \right) \left( \sum_{i\in\mathcal{I}^-}c_i\nabla\psi^{non}_{i,T}\cdot \bar{\mathbf{ n}}+\sum_{i\in\mathcal{I}^+}v_i\nabla\psi^{non}_{i,T}\cdot \bar{\mathbf{ n}} \right),
~~ ~~ \bfc = \bfb - \mu \frac{(\bfgamma^T \bfb)\bfdelta}{1 + \mu \bfgamma^T\bfdelta},    \label{c0} \\
\text{in which~~~} & \bfgamma = \left( \nabla\psi^{non}_{i,T}\cdot \bar{\mathbf{ n}} \right)_{i\in\mathcal{I}^-}, \bfdelta=\left( L(A_i) \right)_{i\in\mathcal{I}^-}
, \;\;\;
\bfb =
\left(
v_i- \mu L(A_i) \sum_{j\in\mathcal{I}^+} \nabla\psi^{non}_{j,T} \cdot \bar{\mathbf{ n}}~ v_j
 \right)_{i\in\mathcal{I}^-}. \label{gam_del}
\end{align}
Then, using $\mathbf{ v} = \mathbf{e}_i, i \in \mathcal{I}$, the standard basis vectors of $\mathbb{R}^3$ in \eqref{ife_shapefun}-\eqref{gam_del}, we obtain the IFE shape functions $\psi^{int}_{i,T}(X), i\in\mathcal{I}$ satisfying $\psi^{int}_{i,T}(A_j)=\delta_{ij}$ for $i,j\in\mathcal{I}$. The local IFE space on each $T \in \mathcal{T}_h^i$ is defined as
\begin{equation}
\label{ife_space_inter_element}
S_h(T)=\textrm{Span}\{ \psi^{int}_{i,T}\,:\, i\in\mathcal{I} \}.
\end{equation}

\commentout{
since the function $L$ and the normal vector $\bar{\mathbf{ n}}$ depend on the interface-mesh intersection points $P$, $Q$ and thus depend on the interface $\Gamma$, it is clear that the coefficients $(c_i)$ and $c_0$ depends on the design variable vectro $\bfalpha$. Moreover according to \eqref{ife_shapefun}, the dependence of the IFE shape functions on design variables is only through the coefficients $(c_i)$, $c_0$ and the function $L(X)$.
}

Local IFE spaces defined on all elements $\mathcal{T}_h$ are then used to define the IFE space globally as follows
\begin{align}
&S_h(\Omega)=\big\{ v\in L^2(\Omega):v|_{T}\in S_h(T); v|_{T_1}(A)=v|_{T_2}(A),~\forall A \in \mathcal{N}_h, \forall\,T_1, T_2 \in \mathcal{T}_h \text{~with~} A \in T_1\cap T_2 \big\}. \label{global_ife_space}
\end{align}
With this IFE space and its associated space $S^0_h(\Omega) = \left\{ v\in S_h(\Omega) ~:~ v(X)=0,~\forall X\in \mathcal{N}_h\cap\partial\Omega_D \right\}$, the interface forward problem \eqref{inter_prob_0} and \eqref{jump_cond_0} can be disretized by
the symmetric PPIFE (SPPIFE) method \cite{2015LinLinZhang} as follows:
find $u_h^k \in S_h(\Omega), k = 1, 2, \cdots, K$ such that
\commentout{
Because the $K$ interface problems in \eqref{inter_prob_0} and \eqref{jump_cond_0}
are essentially the same type, we consider a typical interface problem
described by \eqref{inter_prob_0} and \eqref{jump_cond_0} without the $k$ superscript. Then, its SPPIFE
discretization is to find $u_h\in S_h(\Omega)$ such that
}
\begin{equation}
\label{ppife_1}
\begin{split}
&a_h(u_h^k,v_h) = L_f^k(v_h),  ~~~~ \forall v_h \in S^0_h(\Omega), ~~u_h^k(X) =g_D^k(X), ~~~ \forall X\in \mathcal{N}_h\cap\partial\Omega_D^k,
\end{split}
\end{equation}
where the bilinear form $a_h$ and linear functional $L_f^k$ are given by
\begin{equation}
\begin{split}
\label{ppife_2}
a_h(u_h,v_h) =& \sum_{T\in\mathcal{T}_h} \int_T \beta \nabla u_h\cdot \nabla v_h dX - \sum_{e\in\mathcal{E}^{i}_h\backslash \partial \Omega_N^k} \int_e \{ \beta \nabla u_h \}_e \cdot [v_h]_e ds \\
& - \sum_{e\in\mathcal{E}^{i}_h\backslash \partial \Omega_N^k} \int_e \{ \beta \nabla v_h \}_e \cdot [u_h]_e ds + \sum_{e\in\mathcal{E}^{i}_h} \frac{\sigma^0_e}{|e|} \int_e [u_h]_e \cdot [v_h]_e ds,~~\forall u_h, v_h \in S_h(\Omega),
\end{split}
\end{equation}
\begin{equation}
\begin{split}
\label{ppife_3}
L_f^k(v_h)=& \int_{\Omega} f^k v_h dX +\int_{\partial \Omega_N^k}g_N^k v_h ds + \epsilon \sum_{e\in\mathcal{E}^{i}_h\cap\partial\Omega_D^k} \int_e \beta g_D^k \nabla v_h\cdot\mathbf{ n}_e  ds \\
& + \sum_{e\in\mathcal{E}^{i}_h\cap\partial\Omega_D^k} \frac{\sigma^0_e}{|e|} \int_e g_D^k v_h ds, ~~\forall v_h \in S_h(\Omega).
\end{split}
\end{equation}
In the bilinear form $a_h(\cdot, \cdot)$, the operators $[\cdot]_e$ and $\{ \cdot \}_e$ on each interior interface edge $e\in\mathring{\mathcal{E}}^i_h$ shared by
$T_1$ and $T_2$ are such that $[v]_e=(v|_{T_1}\mathbf{ n}^1_e + v|_{T_2}\mathbf{ n}^2_e)$, and $\{\beta\nabla v\}_e=\frac{1}{2}(\beta\nabla v|_{T_1} + \beta\nabla v|_{T_2}), ~\forall v\in S_h(\Omega)$,
where the normal vector $\mathbf{ n}^1_e=-\mathbf{ n}^2_e$ is from $T^1$ to $T^2$. For
$e\in \mathcal{E}^i_h\cap\partial\Omega$, we define the operators $[\cdot]_e$ and $\{ \cdot \}_e$ as
$[v]_e=v|_{T}\mathbf{ n}_e, ~\{\beta\nabla v\}_e=\beta\nabla v|_{T}, ~\forall v\in S_h(\Omega)$,
where $T$ is the element that contains $e$ and $\mathbf{ n}_e$ is the outward normal vector to $\partial\Omega$. In our applications, we choose $\sigma^0_E=10 \max\{ \beta^-, \beta^+\}$. It has been proven \cite{2015LinLinZhang} that the PPIFE solutions $u^k_h$ from \eqref{ppife_1} approximate the true solutions $u^k$, $1\leqslant k \leqslant K$, with an optimal accuracy with respect to the involved polynomials regardless of the interface location and shape, i.e.,
\begin{equation}
\label{optimal_accuracy}
\| u^k_h - u^k \|_{L^2(\Omega)} + h | u^k_h - u^k |_{H^1(\Omega)} \leqslant Ch^2 \| u \|_{H^2(\Omega)}.
\end{equation}

We now put the SPPIFE method described by \eqref{ppife_1}-\eqref{ppife_3} in the matrix form. We assume that $S_h(\Omega) = \textrm{Span}\{\phi_i(X) ~|~ X_i \in \mathcal{N}_h\}$
in which $\phi_i(X)$ is the global IFE basis function associated with the node $X_i \in \mathcal{N}_h$.
When the $k$-th ($1 \leq k \leq K$) interface forward problem has a mixed boundary condition, we let
$
\mathcal{N}^m_h=\{X_i ~|~ X_i\in\mathring{\mathcal{N}}_h\cup\partial\Omega_N \}
$
such that we can denote the SPPIFE solution $u_h^k(X)\in S_h(\Omega)$ determined by \eqref{ppife_1}-\eqref{ppife_3} as follows:
\begin{equation}
\label{ife_solu}
u_h^k(X)=\sum^{|\mathcal{N}^m_h|}_{i=1} u_i^k \phi_{i}(X) +\sum^{|\mathcal{N}_h|}_{i=|\mathcal{N}^m_h|+1}g_{D}^k(X_i)\phi_i(X),
\end{equation}
where, without loss of generality, we have assumed that nodes in $\mathcal{N}^m_h$ are ordered first.
The stiffness matrix
$\tilde{\mathbf{ A}} = (a_{i,j})_{i, j = 1}^{\abs{\mathcal{N}_h}}$ associated with the bilinear form defined in \eqref{ppife_2} can be assembled from the following local matrices on elements and edges of $\mathcal{T}_h$:
\begin{subequations}
\label{ppife_local_mat}
\begin{align}
\label{ppife_local_mat_1}
&\mathbf{ K}_T=\left( \int_T \beta \nabla \psi_{p,T} \cdot \nabla \psi_{q,T} dX \right)_{p, q\in\mathcal{I}},  ~~~~~~~~~~~~~~~~~~~ \forall T\in\mathcal{T}_h,   \\
\label{ppife_local_mat_2}
&\mathbf{ E}^{r_1r_2}_e=  \left(\int_e  \beta \nabla \psi_{p,T^{r_1}} \cdot  (\psi_{q,T^{r_2}} \mathbf{ n}^{r_2}_e) ds \right)_{p, q\in\mathcal{I} }, ~~~~~~~~~~ \forall e\in\mathcal{E}^i_h, \\
\label{ppife_local_mat_3}
&\mathbf{ G}^{r_1r_2}_e=  \left( \frac{\sigma^0_e}{|e|} \int_e (\psi_{p,T^{r_1}}\mathbf{ n}^{r_1}_e) \cdot (\psi_{q,T^{r_2}}\mathbf{ n}^{r_2}_e) ds \right)_{p, q\in\mathcal{I}}, ~~~~ \forall e\in\mathcal{E}^i_h,
\end{align}
\end{subequations}
where the index $r_1,r_2=1,2$ and the edge $e\in\mathring{\mathcal{E}}^i_h$ shared by the elements $T^1$ and $T^2$. But in the case $e\in\mathcal{E}^i_h\cap\partial\Omega$, we let $r_1=r_2=0$, $\mathbf{ n}^0_e=\mathbf{ n}_e$ is the outward normal vector and $T^0=T$ is the element that contains $e$.
Let
$
\tilde{\mathbf{ A}}_b^{m, k} = (a_{b, i}^k)_{i = 1}^{\abs{\mathcal{N}_h}} = \tilde{\mathbf{ A}} \begin{bmatrix}
\bf{0} &
\mathbf{ g}_D^k
\end{bmatrix}^T,
$
where $\bf{0}$ is the $\abs{\mathcal{N}^m_h}$-dimensional zero vector and
$\mathbf{ g}_D^k=(g_{D}^k(X_{|\mathcal{N}^m_h|+1}),\cdots,g_{D}^k(X_{|\mathcal{N}_h|}))^T$.
Similarly, the load vector $\tilde{\mathbf{ F}}^k = (f_i^k)_{i=1}^{\abs{\mathcal{N}_h}}$ associated with the linear form defined in \eqref{ppife_3}
can be assembled from the following vectors:
\begin{subequations}
\label{ppife_local_vec}
\begin{align}
\label{ppife_local_vec_1}
& \mathbf{ F}_T^k=\left( \int_T  f^k \psi_{p,T} dX \right)_{p\in\mathcal{I}},    &\forall T\in\mathcal{T}_h,  \\
\label{ppife_local_vec_2}
& \mathbf{ B}_e^k=  \left( \int_e \beta g_D^k \nabla \psi_{p,T} \cdot \mathbf{ n}_e ds \right)_{p\in\mathcal{I}},  ~~~ \mathbf{ C}_e^k=  \frac{\sigma^0_e}{|e|}\left( \int_e \beta g_D^k \psi_{p,T} ds \right)_{p \in\mathcal{I}},    &\forall e\in\mathcal{E}^i_h\cap\partial\Omega_D, \\
& \label{ppife_local_vec_Neu}
\mathbf{ N}_e^k=\left( \int_e  g_N^k \psi_{p,T} ds \right)_{p\in\mathcal{I}}, & \forall e\in \mathcal{E}^i_h\cap\partial\Omega_N.
\end{align}
\end{subequations}
Letting $\mathbf{ u}_h^{m, k} =(u_1^k,u_2^k,\cdots,u_{|\mathcal{N}^m_h|}^k)^T$, we can see that
the unknown coefficient vector $\mathbf{ u}_h^{m, k}$ of the SPPIFE solution $u_h^k(X)$ described by \eqref{ife_solu} is determined by the following linear system:
\begin{equation}
\label{ppife_mat_1}
\mathbf{ A}^{m, k}\mathbf{ u}_h^{m, k}=\mathbf{ F}^{m, k},
\end{equation}
where $\mathbf{ A}^{m, k} = (a_{i,j})_{i, j = 1}^{\abs{\mathcal{N}^m_h}}$,
$\mathbf{ F}^{m, k} = (f_i^k)_{i=1}^{\abs{\mathcal{N}^m_h}} - (a_{b,i}^k)_{i=1}^{\abs{\mathcal{N}^m_h}}$, and
the superscript $m$ in \eqref{ppife_mat_1} means that the boundary condition in the
$k$-th interface forward problem is of a mixed type.
\commentout{
Note that functions in the global space $S_h(\Omega)$ may not be continuous along the interface edges. So by adding penalties terms on the interface edges, the authors in \cite{2015LinLinZhang} proposed the partial penalty immersed finite element(PPIFE) method and proved the optimal convergence rate for the IFE solutions. Also the optimal convergence rate obtained is independent with the relative location of the interface, which is a critical feature for moving interface problems. Without loss of generality, here we only give the details for the mixed boundary value problem which also covers the pure Dirichlet boundary value problem and the discussion for the pure Neumann boundary problem is similar. To this end, we consider the sub set of the nodes set $\mathcal{N}_h$ given by $\mathcal{N}^m_h=\{X_i\in: X_i\in\mathring{\mathcal{N}}_h\cup\partial\Omega_N \}$. The mixed boundary problem based on the IFE discretization could be formulated as finding

For the convenience of presentation and without cause of any confusion, in the following discussion we only use $\psi_{j,T}$, $j\in\mathcal{I}$, to denote the local basis functions, i.e., $\psi_{j,T}$ can be $\psi^{int}_{j,T}$ or $\psi^{non}_{j,T}$ according to whether $T$ is an interface element or not. And let $\phi_i$ be the global shape functions corresponding to each node in the mesh, $i=1,2,\cdots,|\mathcal{N}_h|$. Then the finite element solution to \eqref{ppife_1} can be expressed as

where $\mathbf{ u}^m_h=(u_1,u_2,\cdots,u_{|\mathcal{N}^m_h|})^T$ and $\mathbf{ g}_D=(g_{D}(X_{|\mathcal{N}^m_h|+1}),\cdots,g_{D}(X_{|\mathcal{N}_h|}))^T$ satisfy the linear system

Here the matrices $\mathbf{ A}^m$ and $\mathbf{ A}^m_b$ are assembled by the following local matrices

Besides, the right hand side is assembled by the local vectors
}

When the $k$-th ($1 \leq k \leq K$) interface forward problem has Neumann boundary condition such that $\partial \Omega_N^k = \partial \Omega$,
we know that $\abs{\mathcal{N}_h^m} = \abs{\mathcal{N}_h}$, $u_h^k(X)$ given in \eqref{ife_solu} does not have the second term and the related load vector $\tilde{\mathbf{ F}}^k = (f_i^k)_{i=1}^{\abs{\mathcal{N}_h}}$ is assembled by the local vectors only in \eqref{ppife_local_vec_1} and \eqref{ppife_local_vec_Neu}. Since the solution to the interface problem is not unique, as a common practice, the normalization condition $\int_{\Omega} u^k dX = u_0^k$ is imposed such that the SPPIFE solution $u_h^k(X)$ described by \eqref{ife_solu} is determined by the following linear system:
\begin{align}
&\mathbf{ A}^{n}\mathbf{ u}_h^{n, k} = \mathbf{ F}^{n, k}, ~~~ \text{with}~~
\mathbf{ A}^{n} = \begin{bmatrix}
\tilde{\mathbf{A}} & \mathbf{ R} \\
\mathbf{R}^T & 0
\end{bmatrix},
~~\begin{cases}
\mathbf{ u}_h^{n, k} = [u_1^k, u_2^k, \cdots, u_{\abs{\mathcal{N}_h}}^k, \lambda]^T, \\
\mathbf{ F}^{n, k} =  [f_1^k, f_2^k, \cdots, f_{\abs{\mathcal{N}_h}}^k, u_0^k]^T,
\end{cases}  \label{ppife_mat_Ne}
\end{align}
where the superscript $n$ refers to a pure Neuman boundary condition, $\lambda$ is the Lagrange multiplier,
and $\mathbf{R}$ is the vector assembled with the following local vector constructed on each element:
\begin{equation}
\label{ppife_local_mat_Neu_1}
\mathbf{ R}_T=\left( \int_T \psi_{p,T} dX \right)_{p\in\mathcal{I}},~~\forall T \in \mathcal{T}_h.
\end{equation}

\commentout{
Need to be cleaned:

The matrix form of the PPIFE discretization for the interface forward problem is slightly different when its boundary is such that $\partial \Omega_N = \partial \Omega$, i.e., when it has a pure Neumann boundary condition. In this case, $u_h(X)$ given in \eqref{ife_solu} does not have the second term and
the related load vector $\tilde{F} = (f_i)_{i=1}^{\abs{\mathcal{N}_h}}$ is assembled by the local vectors defined in \eqref{ppife_local_vec_1} and \eqref{ppife_local_vec_Neu}, but local vectors given in
\eqref{ppife_local_vec_2} are not needed. In

Putting the coefficients of the IFE solution $u_h(X)$
in $\mathbf{ u}^n_h=(u_1,u_2,\cdots,u_{|\mathcal{N}_h|})^T$, we can see that the PPIFE discretization for
the interface forward problem has the following matrix form:

$\mathbf{ u}^n_h=(u_1,u_2,\cdots,u_{|\mathcal{N}_h|})^T$, $\mathbf{ F}^n$ is assembled by the local vectors \eqref{ppife_local_vec_1} and $\mathbf{ N}^n$ is assembled by the local vectors \eqref{ppife_local_vec_Neu} but on all $\partial\Omega_N$. For pure Neumann boundary value problem, the normalization condition
\begin{equation}
\label{normlization}
\int_{\Omega} u dX = u_0
\end{equation}
is needed to guarantee the uniqueness. So the last row of $\mathbf{ A}^n$ is generated by the local vectors

and correspondingly the last component of the whole right hand side is $u_0$.
}

In summary, according to \eqref{ppife_mat_1} and \eqref{ppife_mat_Ne}, the SPPIFE discretization for
the $K$ interface forward problems described in \eqref{inter_prob_0} and \eqref{jump_cond_0} can be written in the following unified matrix form:
\begin{align}
&\mathbf{ A}^k\mathbf{ u}_h^k = \mathbf{ F}^k, ~~\mathbf{ u}_h^k = \begin{cases}
\mathbf{ u}_h^{m, k} \\
\mathbf{ u}_h^{n, k}
\end{cases} \mathbf{ A}^k = \begin{cases}
\mathbf{ A}^m \\
\mathbf{ A}^n
\end{cases}\mathbf{ F}^k = \begin{cases}
\mathbf{ F}^{m, k} & \text{for a mixed boundary condition},  \\
\mathbf{ F}^{n, k} & \text{for a Neumann boundary condition}.
\end{cases} \label{ppife_mat_unified}
\end{align}
\commentout{
in which, depending on the boundary condition in the interface forward problem,
\begin{eqnarray*}
\mathbf{ u}_h^k = \begin{cases}
\mathbf{ u}_h^{m, k}, \\
\mathbf{ u}_h^{n, k}
\end{cases}~~~~\mathbf{ A}^k = \begin{cases}
\mathbf{ A}^m, \\
\mathbf{ A}^n,
\end{cases}~~~~\mathbf{ F}^k = \begin{cases}
\mathbf{ F}^{m, k}, & \text{for a mixed boundary condition},  \\
\mathbf{ F}^{n, k}, & \text{for a pure Neumann boundary condition}.
\end{cases}
\end{eqnarray*}
}
We note that the matrices $\mathbf{ A}^k$s in \eqref{ppife_mat_unified}
are symmetric positive definite and their size and algebraic structure remain the same as the interface $\Gamma(t, \bfalpha), t \in [0,1]$ evolves in a fixed mesh when the design variable $\bfalpha$ varies.

\section{An IFE Method for the Interface Inverse Problem}
\label{sensitivity}

We now discuss the discretization of the inverse geometric problem \eqref{interf_optimiz} subject to the governing equations \eqref{inter_prob_0} and \eqref{jump_cond_0} by the SPPIFE method on a fixed mesh. When the design variable $\bfalpha$ varies, the parametric interface $\Gamma = \Gamma(t, \bfalpha), t\in [0, 1]$ moves, and the two sub-domains $\Omega^-$ and $\Omega^+$ have to change their shapes correspondingly. Consequently, according to \cite{1994ChoiChang,2004RdenasFuenmayor}, the spacial variables $X \in \Omega$ is considered as a mapping from the design variables $\bfalpha$ to $\Omega$, i.e., $X=X(\bfalpha)$, of which the derivative $\frac{\partial X}{\partial \bfalpha}$ is the so called velocity field. This consideration implies that the local matrices \eqref{ppife_local_mat_1}-\eqref{ppife_local_mat_3}, local vectors
\eqref{ppife_local_vec_1}-\eqref{ppife_local_vec_Neu} and \eqref{ppife_local_mat_Neu_1} should be influenced
by this shape variation; hence, we write the matrix $\mathbf{A}^k$ and vector $\mathbf{F}^k$ in the SPPIFE equation \eqref{ppife_mat_unified} as
$\mathbf{A}^k = \mathbf{A}^k(X(\bfalpha),\bfalpha), ~~\mathbf{F}^k = \mathbf{F}^k(X(\bfalpha),\bfalpha),
~~k = 1, 2, \cdots, K$,
which further imply the solution $\bfu_h^k$ to the IFE equation \eqref{ppife_mat_unified} depends on $\alpha$ so we will denote it as $\bfu_h^k(\bfalpha)$ from now on.
Therefore, the IFE solution $u_h^k(X)$ to the $k$-th ($1 \leq k \leq K$) governing interface forward problem in the form of \eqref{ife_solu} depends on $\bfalpha$ through the IFE solution vector $\mathbf{ u}_h^k(\bfalpha)$, the spacial variable $X(\bfalpha)$, and the IFE basis functions as follows
\begin{equation}
\begin{split}
\label{ife_solu_alpha}
u_h^k(X) = u_h^k(\bfalpha) = u_h^k(\mathbf{ u}_h^k(\bfalpha),X(\bfalpha),\bfalpha) =& \sum^{|\mathcal{N}^m_h|}_{i=1} u_i^k(\bfalpha) \phi_{i}(X(\bfalpha), \bfalpha) +\sum^{|\mathcal{N}_h|}_{i=|\mathcal{N}^m_h|+1}g_{D}^k(X_i)\phi_i(X(\bfalpha), \bfalpha)
\end{split}
\end{equation}
where the second variable in $\phi_i(X(\bfalpha), \bfalpha)$ emphasizes the fact that $\bfalpha$ also effects the IFE solution
$u_h^k(X)$ through the coefficients of the IFE shape functions by the formulas \eqref{c0}-\eqref{gam_del}.

\commentout{
It has been proven in \cite{2015LinLinZhang} that the PPIFE solution $u_h^k \in S_h(\Omega)$ approximates the exact solution $u^k$ to the interface forward problem \eqref{inter_prob_0}-\eqref{jump_cond_0} with an optimal convergence rate regardless of the interface location in optimization iterations, and this is a critical feature the IFE method for the inverse geometric problem in which the interface is to be optimized with a fixed mesh.
}

The IFE solutions $u_h^k(X)\approx u^k(X),~1 \leq k \leq K$ naturally suggest the following discretization of the integrand in the objective functional defined by  \eqref{ObjFun_4_interf_optimiz}:
$$
J(u_h^1(\bfalpha), u_h^2(\bfalpha), \cdots, u_h^K(\bfalpha); X, \Gamma(\cdot, \bfalpha)) \approx
J(u^1(\bfalpha), u^2(\bfalpha), \cdots, u^K(\bfalpha); X, \Gamma(\cdot, \bfalpha)).
$$
Following explanations similar to those in the previous
paragraph, the design variable $\bfalpha$ can influence the approximate integrand $J(u_h^1(\bfalpha), u_h^2(\bfalpha), \cdots, u_h^K(\bfalpha); X, \Gamma(\cdot, \bfalpha))$ through
$\mathbf{ u}_h^k(\bfalpha), 1 \leq k \leq K$, $X(\bfalpha)$, and $\bfalpha$ itself; hence, we can denote these dependencies as
\begin{eqnarray}\label{discrt_obj_1}
J_h(\mathbf{ u}_h^1(\bfalpha), \mathbf{ u}_h^2(\bfalpha), \cdots, \mathbf{ u}_h^K(\bfalpha), X(\bfalpha),\bfalpha) \coloneqq J(u_h^1(\bfalpha), u_h^2(\bfalpha), \cdots, u_h^K(\bfalpha); X, \Gamma(\cdot, \bfalpha)).
\end{eqnarray}
Therefore, we propose an IFE method for solving the inverse geometric problem described in \eqref{inter_prob_0}-\eqref{ObjFun_4_interf_optimiz} on a fixed mesh of $\Omega$ by carrying out a shape optimization as follows: look for the design variable $\bfalpha^*$ that can minimize the following objective function
\begin{equation}
\begin{split}
\label{discrt_obj_2}
&\mathcal{J}_h (\mathbf{ u}_h^1(\bfalpha), \mathbf{ u}_h^2(\bfalpha), \cdots, \mathbf{ u}_h^K(\bfalpha), \bfalpha) := \int_{\Omega_0} J_h(\mathbf{ u}_h^1(\bfalpha), \mathbf{ u}_h^2(\bfalpha), \cdots, \mathbf{ u}_h^K(\bfalpha), X(\bfalpha),\bfalpha) dX, \\
\textrm{subject to}~~~~& \mathbf{ A}^k(X(\bfalpha), \bfalpha)\mathbf{ u}_h^k(\bfalpha) - \mathbf{ F}^k(X(\bfalpha), \bfalpha)=\mathbf{ 0},~~k = 1, 2, \cdots, K.
\end{split}
\end{equation}

The fact that the IFE solution $u_h^k(X),~1 \leq k \leq K$ is an optimal approximation to $u^k(X),~1 \leq k \leq K$ regardless of the location of the
interface $\Gamma(t, \bfalpha),~t \in [0, 1]$ to be optimized in a chosen fixed mesh \cite{2015LinLinZhang}
implies that the objective function $\mathcal{J}_h (\mathbf{ u}_h^1(\bfalpha), \mathbf{ u}_h^2(\bfalpha), \cdots, \mathbf{ u}_h^K(\bfalpha), \bfalpha)$
in this IFE method is an optimal approximation to the objective functional given in \eqref{ObjFun_4_interf_optimiz} regardless
of the interface location in a chosen fixed mesh for common inverse geometric problems such as those to be presented in Section \ref{sec:Applications}.
For example, when the shape functional $J(u^{1}, u^{2}, \cdots, u^{K}; X, \Gamma)$ is in the popular output-least-squares form such that:
\begin{equation}
\label{optimal_shape_fun_eq_1}
J(u^{1}, u^{2}, \cdots, u^{K};X,\Gamma) = \sum_{k=1}^K  | u^k - \bar{u}^k |^2,
\end{equation}
where $u^{1}, u^{2}, \cdots, u^{K}$ are the true solutions and $\bar{u}^{1}, \bar{u}^{2}, \cdots, \bar{u}^{K}$ are the given data functions in $L^2(\Omega)$, then, according to \eqref{discrt_obj_1} and \eqref{discrt_obj_2}, we have
\begin{equation}
\begin{split}
\label{optimal_shape_fun_eq_3}
& \left|\mathcal{J}(u^{1}, \cdots, u^{K}, \Gamma) - \mathcal{J}_h(\bfu_h^{1}, \cdots, \bfu_h^{K}, \Gamma)\right| \leqslant  \int_{\Omega} | J(u^{1}, \cdots, u^{K};X,\Gamma) - J(u^{1}_h, \cdots, u^{K}_h;X,\Gamma) | dX \\
 = &  \sum_{k=1}^K \int_{\Omega} | u^{k} - u^{k}_h | ~ | u^{k} + u^{k}_h -2\bar{u}^k | dX \\
 \leqslant &  \sum_{k=1}^K ( \| u^{k}\|_{L^2(\Omega)} + \|u^{k}_h\|_{L^2(\Omega)} + 2 \|\bar{u}^k \| _{L^2(\Omega)} ) \|  u^{k} - u^{k}_h \|_{L^2(\Omega)} \leqslant C ( \| u^k \|^2_{H^2(\Omega)} + \|\bar{u}^k \|^2_{L^2(\Omega)} ) h^2,
\end{split}
\end{equation}
where in the last step, we have applied the optimal estimation in the $L^2$ norm for the PPIFE solutions \cite{2015LinLinZhang}: $\|  u^{k} - u^{k}_h \|_{L^2(\Omega)} \leqslant Ch^2 \| u^k \|_{H^2(\Omega)}$ which also implies $\| u^k_h \|\leqslant C\| u^k \|_{H^2(\Omega)} $. Therefore, it holds that
\begin{equation}
\label{optimal_shape_fun_eq_2}
|\mathcal{J}(u^{1}, u^{2}, \cdots, u^{K}, \Gamma) - \mathcal{J}_h(\bfu_h^{1}, \bfu_h^{2}, \cdots, \bfu_h^{K}, \Gamma)| \leqslant C h^2
\end{equation}
in which the constant $C$ depend on the solutions $u^{k}$ and data $\bar{u}^{k}$, $1\leqslant k \leqslant K$.


 Furthermore, as discussed in the following subsections, we have formulas that can be efficiently executed in the IFE framework for accurately computing the gradient of this discrete objective function. These features are advantageous for implementing this IFE method with a typical numerical optimization algorithm, such as those based on the descent direction, for efficiently and accurately solving an
inverse geometric problem described by \eqref{inter_prob_0}-\eqref{ObjFun_4_interf_optimiz}.

\commentout{
Hence, by the IFE discretization, a discretized objective function $\mathcal{J}_h$ should encompass the influence of $\bfalpha$ also through the spacial variables $X$, the solution vector $\mathbf{ u}_h^k$ and all the rest including the coefficients \eqref{coef} in IFE shape functions. This leads to the following discretized form of
the objective functional given in \eqref{ObjFun_4_interf_optimiz}:

{\color{red}Not needed here: In this section, we derive a discretization for optimization problem \eqref{ObjFun_4_interf_optimiz} subject to the governing equations described by the interface forward problems \eqref{inter_prob_0} and \eqref{jump_cond_0} and then carry out the related sensitivity analysis. As we can see later in this section, the sensitivity analysis
for the discrete optimization problem critically depends on how the intersection points $P$ and $Q$ of the
interface $\Gamma$ and an interface element (see Figure \ref{interface_element}) change when the design variable $\bfalpha=(\alpha_i)_{i\in\mathcal{D}}$ of the parametrization of the interface $\Gamma$ varies.
With a fixed mesh, we show that the formulas for the derivatives of $P$ and $Q$ with respect to
$\bfalpha=(\alpha_i)_{i\in\mathcal{D}}$ can be derived. These formulas are then applied to
derive formulas for the velocity field and the shape derivatives of IFE shape functions. Ultimately,
all of these formulas enables us to efficiently and accurately compute the sensitivities on the fixed mesh through the standard discretized adjoint procedure {\color{red}[references ????]}.
}
}

\commentout{
where the constraint equation is described by \eqref{ppife_mat_unified} with its stiffness matrix
and load vector written as $\mathbf{ A}(\bfalpha)$ and $\mathbf{ F}(\bfalpha)$, respectively, to emphasize their dependence on the design variable $\bfalpha$.
}

\subsection{Velocity at Intersection Points} \label{sec:DerivativeIntersectionPoints}

By \eqref{ife_shapefun}, an IFE function on an interface element $T = \bigtriangleup A_1A_2A_3\in \mathcal{T}_h$ depends on the interface-mesh intersection points $P$ and $Q$, see the first sketch in Figure \ref{interface_element}. Obviously, points $P$ and $Q$
change their locations
when interface $\Gamma(t, \bfalpha), ~t \in [0, 1]$ evolves due to the change in the design variables $\bfalpha=(\alpha_i)_{i\in\mathcal{D}}$. Hence, the objective function in \eqref{discrt_obj_2} essentially depends on how the interface-mesh intersection points $P$ and $Q$ change when the design variable $\bfalpha$ varies such that the derivatives of $P$ and $Q$ with respect to $\bfalpha$ are critical ingredients for the sensitivity analysis
of the proposed IFE method, and this motivates us to derive their formulas in this subsection. According to \cite{2015NajafiSafdari}, these derivatives are the velocity defined at those intersection points and they will be used to develop the velocity field on the whole domain $\Omega$.

\commentout{
In general the velocity field is not unique because the functions relating the design variables $\bfalpha$ and spacial variables $X$ might be different according to different automatic mesh generator or different algorithms. Under these automatic mesh generators, usually every nodes in the mesh should be assumed to move as the interface changes and the dependence of the mesh nodes on the interface parameters is not based on any explicit mathematic rules. Thus it is even difficult to construct a function to relate the mesh nodes and design variables, let alone requiring the differentiability of the function.
}

\commentout{To be precise, we let $\Gamma(t,\bfalpha)=(x(t,\bfalpha),y(t,\bfalpha)),~t \in [0, 1]$ be the standard cubic spline parametrization of the interface curve and let
$T = \bigtriangleup A_1A_2A_3\in \mathcal{T}_h$ be an interface element such that
$A_i = (x_i, y_i)^T, i = 1, 2, 3$. }

Assume that $\Gamma(t,\bfalpha)$ intersects with the edge of $T$ at points $P = (x_P, y_P)$ and $Q = (x_Q, y_Q)$ corresponding to certain parameters $\hat{t}_P, \hat{t}_Q \in [0, 1]$, see the illustration in Figure \ref{interface_element}. Obviously, these two interface-mesh intersection points and their corresponding parameters $\hat{t}_P$ and $\hat{t}_Q$ all vary with respect to the design variable $\bfalpha$; hence, we can
express them as functions of $\bfalpha$ as follows:
\begin{align*}
&P = P(\bfalpha) = (x_P, y_P) = (x(\hat{t}_P), y(\hat{t}_P)) = (x(\hat{t}_P(\bfalpha),\bfalpha), y(\hat{t}_P(\bfalpha),\bfalpha)), \\
&Q = Q(\bfalpha) = (x_Q, y_Q) = (x(\hat{t}_Q), y(\hat{t}_Q)) =  (x(\hat{t}_Q(\bfalpha),\bfalpha), y(\hat{t}_Q(\bfalpha),\bfalpha)).
\end{align*}
In the following discussions, we use $D_{\alpha_j}$ to denote the total derivative operator with respect to the $j$-th design variable $\alpha_j$, $j\in\mathcal{D}$, and $D_{\bfalpha}$ is the corresponding gradient operator. But we use $\frac{\partial}{\partial \alpha_j}$ and $\frac{\partial}{\partial \bfalpha}$
to denote the standard partial differential operators and the gradient operator with respect to $\alpha_j$ and $\bfalpha$.

Without loss of generality, we assume that the interface-mesh intersection points
are such that $P \in \overline{A_1A_2}$ and $Q \in \overline{A_1A_3}$ as illustrated in Figure \ref{interface_element}. Then the following lemma establishes explicit formulas for computing the total derivatives of interface-mesh intersection points with respect to $\bfalpha$.

\commentout{
{\color{red}write the following into two lemmas??? A lemma for P, and use some statements to describe results for Q plus some comments on why the results depend only on the edge ...}
}
\begin{lemma}
\label{inter_pt_deri}
Assume $\Gamma(t, \bfalpha)$ is not tangent to $A_1A_2$ at $P$. Then the function $P = P(\hat{t}_P(\bfalpha), \bfalpha)$ is differentiable and
its velocity defined as the total derivatives $D_{\alpha_j}P$ with respect to $\alpha_j$, $j\in\mathcal{D}$ are determined by the following linear system:
\begin{align}
&M_P(\hat{t}_P)  ~ D_{\alpha_j}P =  b_{P, j}(\hat{t}_P),~~\forall ~j \in \mathcal{D}, \label{P_deri_4} \\
\text{with~~~~} & M_P(\hat{t}_P)= \begin{bmatrix}
y_2-y_1 & -(x_2-x_1) \\
 \frac{\partial y}{\partial t}(\hat{t}_P) & -\frac{\partial x}{\partial t} (\hat{t}_P)
\end{bmatrix} ~~\textrm{and}~~
b_{P,j}(\hat{t}_P)=
\begin{bmatrix}
0 \\
\frac{\partial y}{\partial t}(\hat{t}_P)\frac{\partial x}{\partial \alpha_j}(\hat{t}_P) -
 \frac{\partial x}{\partial t}(\hat{t}_P)\frac{\partial y}{\partial \alpha_j}(\hat{t}_P)
\end{bmatrix}. \nonumber
\end{align}
\end{lemma}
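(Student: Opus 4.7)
The plan is to treat $P$ as implicitly defined by two equations: one saying $P$ lies on the (fixed) edge $\overline{A_1A_2}$, and one saying $P$ lies on the parametrized interface. Since the mesh is fixed, the vertices $A_1=(x_1,y_1)$ and $A_2=(x_2,y_2)$ do not depend on $\bfalpha$, so the line constraint reads
\begin{equation*}
(y_2-y_1)(x_P-x_1)-(x_2-x_1)(y_P-y_1)=0,
\end{equation*}
while the parametric constraint at $\hat{t}_P=\hat{t}_P(\bfalpha)$ reads $x_P=x(\hat{t}_P,\bfalpha)$ and $y_P=y(\hat{t}_P,\bfalpha)$. Regarding $(\hat{t}_P,x_P,y_P)$ as unknowns of a $3\times 3$ nonlinear system in $\bfalpha$, I will apply the implicit function theorem: its Jacobian with respect to these unknowns has determinant proportional to $(y_2-y_1)\frac{\partial x}{\partial t}(\hat{t}_P)-(x_2-x_1)\frac{\partial y}{\partial t}(\hat{t}_P)$, which is the cross product of the edge direction with the curve tangent at $P$ and is nonzero precisely when $\Gamma(t,\bfalpha)$ is not tangent to $\overline{A_1A_2}$ at $P$. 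This yields differentiability of $\hat{t}_P(\bfalpha)$, and hence of $P(\bfalpha)$, and justifies the forthcoming chain-rule computations.

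Next, I would differentiate the line constraint with respect to $\alpha_j$; since the edge is fixed, this gives
\begin{equation*}
(y_2-y_1)\,D_{\alpha_j}x_P-(x_2-x_1)\,D_{\alpha_j}y_P=0,
\end{equation*}
which is exactly the first row of the claimed system. To obtain the second row, I differentiate the two parametric relations to get $D_{\alpha_j}x_P=\frac{\partial x}{\partial t}(\hat{t}_P)\,D_{\alpha_j}\hat{t}_P+\frac{\partial x}{\partial \alpha_j}(\hat{t}_P)$ and the analogous identity for $y_P$. The presence of the unknown scalar $D_{\alpha_j}\hat{t}_P$ must be removed, and the natural trick is to eliminate it by taking a linear combination: multiplying the $x$-equation by $\frac{\partial y}{\partial t}(\hat{t}_P)$, the $y$-equation by $\frac{\partial x}{\partial t}(\hat{t}_P)$, and subtracting cancels the $D_{\alpha_j}\hat{t}_P$ term and leaves
\begin{equation*}
\tfrac{\partial y}{\partial t}(\hat{t}_P)\,D_{\alpha_j}x_P-\tfrac{\partial x}{\partial t}(\hat{t}_P)\,D_{\alpha_j}y_P=\tfrac{\partial y}{\partial t}(\hat{t}_P)\tfrac{\partial x}{\partial \alpha_j}(\hat{t}_P)-\tfrac{\partial x}{\partial t}(\hat{t}_P)\tfrac{\partial y}{\partial \alpha_j}(\hat{t}_P),
\end{equation*}
which is exactly the second row of the system. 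Stacking the two equations gives $M_P(\hat{t}_P)\,D_{\alpha_j}P=b_{P,j}(\hat{t}_P)$ as claimed. Geometrically the elimination projects the velocity onto the normal of the curve, so the first equation constrains the velocity to the fixed edge direction while the second transports it along the normal of $\Gamma$; these two directions are independent exactly under the non-tangency hypothesis.

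The only delicate step is verifying that the non-tangency hypothesis coincides with invertibility of $M_P$. This is immediate from $\det M_P(\hat{t}_P)=-(y_2-y_1)\frac{\partial x}{\partial t}(\hat{t}_P)+(x_2-x_1)\frac{\partial y}{\partial t}(\hat{t}_P)$, which is the scalar cross product of the edge tangent $(x_2-x_1,y_2-y_1)$ and the curve tangent $\bigl(\tfrac{\partial x}{\partial t}(\hat{t}_P),\tfrac{\partial y}{\partial t}(\hat{t}_P)\bigr)$ at $P$, and this quantity vanishes precisely when the two tangent directions are collinear. Finally, by the identical argument applied to the edge $\overline{A_1A_3}$ one obtains the analogous formula for $D_{\alpha_j}Q$; because the derivation only uses the endpoints of whichever edge contains the intersection point, no further work is required. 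I do not anticipate any serious obstacle; the calculation is essentially an application of the implicit function theorem, and the main care is ensuring the tangency condition is used in the right place.
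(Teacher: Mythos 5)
Your proposal is correct and follows essentially the same route as the paper: differentiate the edge constraint and the parametric relations with respect to $\alpha_j$, eliminate $D_{\alpha_j}\hat{t}_P$ by the cross-combination with $\partial y/\partial t$ and $\partial x/\partial t$, and observe that $\det M_P$ equals the cross product of the edge direction with the curve tangent, which is nonzero under the non-tangency hypothesis. Your explicit appeal to the implicit function theorem to justify the differentiability of $\hat{t}_P(\bfalpha)$ is a small but welcome addition that the paper leaves implicit.
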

\begin{proof}
First, differentiating $x_P=x(\hat{t}_P(\bfalpha),\bfalpha)$ and $y_P=y(\hat{t}_P(\bfalpha),\bfalpha)$ with respect to $\alpha_j$, we have
$
D_{\alpha_j}x_P= \frac{\partial x}{\partial t} \frac{\partial \hat{t}_P}{ \partial \alpha_j } +\frac{\partial x}{\partial \alpha_j},~
D_{\alpha_j}y_P= \frac{\partial y}{\partial t} \frac{\partial \hat{t}_P}{ \partial \alpha_j } +\frac{\partial y}{\partial \alpha_j},
$
which leads to
\begin{equation}
\label{P_deri_3}
 \frac{\partial y}{\partial t} D_{\alpha_j}x_P - \frac{\partial x}{\partial t} D_{\alpha_j}y_P = \frac{\partial y}{\partial t}\frac{\partial x}{\partial \alpha_j} -
 \frac{\partial x}{\partial t}\frac{\partial y}{\partial \alpha_j}.
\end{equation}
On the other hand, since $P$ is on the edge $A_1A_2$, we have the equation
$
(y_2-y_1)x_P-(x_2-x_1)y_P=x_2y_1-x_1y_2.
$
Differentiating it with respect to $\alpha_j$ yields
\begin{equation}
\label{P_deri_2}
(y_2-y_1)D_{\alpha_j}x_P -(x_2-x_1)D_{\alpha_j}y_P=0.
\end{equation}
Combining \eqref{P_deri_2} and \eqref{P_deri_3} yields the linear system for $D_{\alpha_j}P$ in
\eqref{P_deri_4}.
Let $\mathbf{ n}_e$ be the normal vector to the edge $A_1A_2$. Then we have
$
\textrm{det}(M_P(\hat{t}_P))=\mathbf{ n}_e\cdot \nabla\Gamma(\hat{t}_P(\bfalpha), \bfalpha)
$
which is non zero by the assumption that $A_1A_2$ is not tangent to $\Gamma(t, \bfalpha)$ at $P$.
\end{proof}

Similar results hold for the interface-mesh intersection point $Q$. Assume that
$\Gamma(t, \bfalpha)$ is not tangent to $A_1A_3$ at $Q$, then the function $Q = Q(\hat{t}_Q(\bfalpha), \bfalpha)$ is differentiable and for $j\in\mathcal{D}$, $D_{\alpha_j}Q$ is determined by
\begin{align}
&M_Q(\hat{t}_Q)  ~ D_{\alpha_j}Q =  b_{Q, j}(\hat{t}_Q),~~\forall ~j \in \mathcal{D}, \label{Q_deri_4}\\
\text{with~~} &M_Q(\hat{t}_Q)= \begin{bmatrix}
y_3-y_1 & -(x_3-x_1) \\
 \frac{\partial y}{\partial t}(\hat{t}_Q) & -\frac{\partial x}{\partial t} (\hat{t}_Q)
\end{bmatrix} ~~~ \textrm{and} ~~~
b_{Q,j}(\hat{t}_Q)= \begin{bmatrix}
0 \\
\frac{\partial y}{\partial t}(\hat{t}_Q)\frac{\partial x}{\partial \alpha_j}(\hat{t}_Q) -
\frac{\partial x}{\partial t}(\hat{t}_Q)\frac{\partial y}{\partial \alpha_j}(\hat{t}_Q)
\end{bmatrix}. \nonumber
\end{align}


\commentout{
Formulas \eqref{P_deri_4} and \eqref{Q_deri_4} allow us to compute material derivatives of the interface-mesh intersection points since the edge tangent to the interface is not considered as the interface edge. In addition, the case in which the interface is tangent to the edge of an element actually rarely happens in computation.
}

Note that $\partial x/\partial t, \partial y/\partial t, \partial x/\partial \alpha_j$ and $\partial y/\partial \alpha_j$ required in
formulas \eqref{P_deri_4} and \eqref{Q_deri_4} depend on the chosen parametrization for the interface $\Gamma(t, \bfalpha)$, but they are usually
easy to derive by standard calculus procedures.

\commentout{
Note that formulas \eqref{P_deri_4} and \eqref{Q_deri_4} allow us to compute
$D_{\alpha_j}P$ and $D_{\alpha_j}Q$ without using $\frac{\partial \hat{t}}{ \partial \alpha_j }$
which is usually cumbersome because the dependence of $\hat{t}$ on $\bfalpha$ involves the inverse function
of the parametrization $\Gamma(t, \bfalpha)$ for the interface.

depend on the specific parameterization of $\Gamma$. For example,
let $\bfalpha = (\bfalpha_x, \bfalpha_y)$ be such that $\bfalpha_x$ and $\bfalpha_y$ are the vectors for the $x$ and $y$ coordinates of control points,
and let $\Gamma(t, \bfalpha) = (x(t, \bfc_x(\bfalpha_x)), y(t, \bfc_y(\bfalpha_y))), ~t = [0, 1]$ be a parametric cubic spline whose
coefficient vectors are determined by two linear systems $\mathbf{ W}_s\bfc_s=\mathbf{ r}_s(\bfalpha_s)$, $s=x,y$ \cite{2012Walter}. We note that matrices $\mathbf{ W}_s, s = x, y$ are independent of $\bfalpha$ and vectors $\mathbf{ r}_s(\bfalpha_s)$ are simple polynomials of $\bfalpha_s, s = x, y$. Then we can solve for $\frac{\partial\bfc_x}{\partial\alpha_j}$ from $\mathbf{ W}_x\frac{\partial\bfc_x}{\partial\alpha_j}=\frac{\partial\mathbf{ r}_x(\bfalpha_x)}{\partial \alpha_j}$,
with $1 \leq j \leq dim(\bfalpha_x)$ and solve for $\frac{\partial\bfc_y}{\partial\alpha_j}$ from
$
\mathbf{ W}_y\frac{\partial\bfc_y}{\partial\alpha_j}=\frac{\partial\mathbf{ r}_y(\bfalpha_y)}{\partial\alpha_j}
$
with $dim(\bfalpha_x) + 1 \leq j \leq dim(\bfalpha_x) + dim(\bfalpha_y)$. Then, we obtain $\partial x/\partial \alpha_j = x(t, \frac{\partial\bfc_x}{\partial\alpha_j})$ and $\partial y/\partial \alpha_j = y(t, \frac{\partial\bfc_y}{\partial\alpha_j})$.
}

\commentout{
Let $x = x(t, \bfc_x)$ and $y = y(t, \bfc_y)$ be the cubic splines
interpolating vector $\bfalpha_x$ and $\bfalpha_y$, respectively, at the sampling points with
$\bfc_x$ and $\bfc_y$ being the coefficients of these two cubic splines. As usual, $\bfalpha_x$ and $\bfalpha_y$ are the $x$ and $y$ coordinates of the control points for this cubic spline parameterization. According to {\color{red}\cite{2012Walter}},
vectors $\bfc_x$ and $\bfc_y$ are determined by two linear systems $\mathbf{ W}_x\bfc_x=\mathbf{ r}_x(\bfalpha_x)$ and $\mathbf{ W}_y\bfc_y=\mathbf{ r}_y(\bfalpha_y)$
in which the matrices $\mathbf{ W}_x$ and $\mathbf{ W}_y$ are independent of the
design variable $\bfalpha = (\bfalpha_x, \bfalpha_y)^T$. Differentiating these systems with respect to $\alpha_j$ leads to
\begin{subequations}
\label{spline_coe_deri}
\begin{align}
&\mathbf{ W}_x\frac{\partial\bfc_x}{\partial\alpha_j}=\frac{\partial\mathbf{ r}_x(\bfalpha_x)}{\alpha_j}, ~~\text{if~~} 1 \leq j \leq dim(\bfalpha_x) \label{spline_coe_1} \\
&\mathbf{ W}_y\frac{\partial\bfc_y}{\partial\alpha_j}=\frac{\partial\mathbf{ r}_y(\bfalpha_y)}{\partial\alpha_j}, ~~
\text{if~~} dim(\bfalpha_x) + 1 \leq j \leq dim(\bfalpha_x) + dim(\bfalpha_y), \label{spline_coe_2}
\end{align}
\end{subequations}
from which we can solve \eqref{spline_coe_deri} for $\frac{\partial\bfc_x}{\partial\alpha_j}$ and
$\frac{\partial\bfc_y}{\partial\alpha_j}$ to obtain
$\partial x/\partial \alpha_j = x(t, \frac{\partial\bfc_x}{\partial\alpha_j})$ and $\partial y/\partial \alpha_j = y(t, \frac{\partial\bfc_y}{\partial\alpha_j})$.
}
\commentout{
\begin{rem}
\label{solu_of_Mp}
Actually Lemma \ref{inter_pt_deri} indicates that the formula for the
\end{rem}
}

\commentout{
{\color{red}Add some comments stating that the results above are valid for general parametrizations. Given specific descriptions about how to compute
$$\frac{\partial x}{\partial t}, \frac{\partial y}{\partial t}, \frac{\partial x}{\partial \alpha_j}, \frac{\partial y}{\partial \alpha_j}$$
for using a cubic spline
}
}


\begin{rem}
\label{P_A1A2}
Let $\mathbf{ n}_e$ be the normal of $\overline{A_1A_2}$, then we can directly verify that
$
D_{\alpha_j}P\cdot \mathbf{ n}_e =\mathbf{ n}_e M_P^{-1}b_P =0
$
which means that $D_{\alpha_j}P$ is parallel to the edge $A_1A_2$ for any $\alpha_j$, $j\in\mathcal{D}$. Geometrically, this property implies that every intersection point can only move along the corresponding interface edge.
\end{rem}

\subsection{A Velocity Field for Sensitive Computations}\label{sec:VelocityField}

In the inverse geometric problem, the two sub-domains $\Omega^-$ and $\Omega^+$ separated from each other by the interface $\Gamma = \Gamma(t, \bfalpha), t\in [0, 1]$ change their shapes when the parametric interface moves because of a variation in the design variable $\bfalpha$. Hence, $\Omega^-$ and $\Omega^+$ can be considered as functions of $\bfalpha$. Consequently, since $\Omega^- \cup \Omega^+ = \Omega \backslash \Gamma$, we can consider the spacial variable $X \in \Omega$ as a mapping from the design variables $\bfalpha$ to $\Omega$, i.e., $X=X(\bfalpha)$, and its derivative $\frac{\partial X}{\partial \bfalpha}$ is the so called velocity field \cite{2004RdenasFuenmayor}, a key ingredient in the sensitivity analysis in shape optimizations. Therefore, in this subsection, we develop and analyze a velocity field for the IFE-based shape optimization to solve the inverse geometric problem.
\commentout{
{\color{blue}and naturally, on each interface element, this velocity field depends on the velocity at the interface-mesh intersection points derived in the last sub-subsection.}
}

Since the IFE method proposed in \eqref{discrt_obj_2} is based on a fixed interface independent mesh, all the points located in non-interface elements can be considered as constant functions of the design variable $\bfalpha$. Therefore, on such a fixed mesh used by the proposed IFE method, the velocity field vanish on all non-interface elements because $\frac{\partial X}{\partial \bfalpha}=0$, and this suggests we need to discuss the velocity field only on interface elements.

\commentout{
{\color{red}Moved here from 3.1:
Naturally, our intention is to solve the discrete optimization problems \eqref{discrt_obj_2} numerically
by gradient-based methods such as the {\color{red}XXX method and the XXX [???]}, and a key ingredient in computing the gradient of its objective function with respect to the design variable $\bfalpha$ is the velocity field. Since the IFE method provides an accurate discretization for the interface forward problems on a fixed interface independent mesh, all the points located in non-interface elements can be considered as constant functions of the design $\bfalpha$, and thus, do not move as the interface changes {\color{red}[an references???]}. Therefore, on such a fixed mesh used by the IFE discretization, the velocity field vanish on all non-interface elements because $\frac{\partial X}{\partial \bfalpha}=0$, and this implies that non-zero velocity field only needs to be considered on interface elements. Furthermore, with the IFE discretization on a fixed mesh, the dependence of $X$ in every interface element on the design variables $\bfalpha$ is through the interface-mesh intersection points, explicit formulas of $X$ in terms of $\bfalpha$ is readily available and these formulas are indeed differentiable. Hence, to construct the velocity field needed for
solving the discrete optimization problems \eqref{discrt_obj_2} numerically, we derive the formulas for the derivatives of the coordinates of the interface-mesh intersection points with respect to design $\bfalpha$.
Furthermore, we note that this fundamental result enables us to derive the formula for the shape derivatives of the IFE shape functions.

The basic results in the previous subsection enables us to derive formulas that define a velocity field whose support is the union of all the interface elements, i.e., this velocity field is zero outside interface elements. This velocity field is defined piecewise on elements of an interface independent mesh $\mathcal{T}_h$ and it has an $H^1$ regularity.
}
}
As before, we consider a typical interface element $T = \bigtriangleup A_1A_2A_3 \in \mathcal{T}_h^i$, without loss of generality, we assume that the parameterized interface $\Gamma(t,\bfalpha)=(x(t,\bfalpha),y(t,\bfalpha)),~t \in [0, 1]$ intersects with $T$ at $P(\bfalpha) \in \overline{A_1A_2}$ and $Q(\bfalpha) \in \overline{A_1A_3}$, see the first sketch in Figure \ref{interface_element}, but neither $P$ nor $Q$ coincides with vertices of $T$. All results derived from now on are readily extended to the case in which one of the interface-mesh intersection points $P$ and $Q$ is a vertex of $T$.

Inspired by the ideas from \cite{2003HaslingerMkinen,2015NajafiSafdari}, we partition $T$ into three sub-elements as follows:
$
T_1 = \bigtriangleup A_1PQ, ~~T_2 = \bigtriangleup A_2QP, ~~T_3 = \bigtriangleup A_3QA_2,
$
and let $\hat{T} = \bigtriangleup \hat{B}_1\hat{B}_2\hat{B}_3$ be the usual reference element with vertices
$
\hat{B}_1 = (0, 0)^T, ~~\hat{B}_2 = (1, 0)^T, ~~\hat{B}_1 = (0, 1)^T,
$
see the 2nd and the 3rd sketches in Figure \ref{interface_element}. Then, the standard affine mappings from the reference element $\hat{T} = \bigtriangleup \hat{B}_1\hat{B}_2\hat{B}_3$ to $T_m, m = 1, 2, 3$ provide a relation between the points in
$T$ and the design variable $\bfalpha$ as follows:
\begin{equation}
\label{aff_map}
X(\bfalpha)=\mathcal{F}_m(\bfalpha,\xi,\eta)=\mathbf{ J}_m(\bfalpha)\left(\begin{array}{c} \xi \\ \eta \end{array}\right)+A_m, ~~\text{for~~} \begin{pmatrix}
\xi \\
\eta
\end{pmatrix} \in \hat{T}, ~~m = 1, 2, 3,
\end{equation}
where the matrix $\mathbf{ J}_m(\bfalpha)$ is the Jacobian matrix of $\mathcal{F}_m$ such that
$
\mathbf{ J}_1(\bfalpha)=\left( P(\bfalpha)-A_1, Q(\bfalpha)-A_1 \right), ~\mathbf{ J}_2(\bfalpha)=\left( Q(\bfalpha)-A_2, P(\bfalpha)-A_2 \right), ~\mathbf{ J}_3(\bfalpha)=\left( Q(\bfalpha)-A_3, A_2-A_3 \right).
$
For every $X\in T$, the function $X(\bfalpha)$ given in \eqref{aff_map} is a piecewise
differentiable function such that for every $j\in\mathcal{D}$
\begin{align}
&D_{\alpha_j} X(\bfalpha)= (D_{\alpha_j} \mathbf{ J}_m(\bfalpha)) \mathbf{ J}^{-1}_m(\bfalpha)(X(\bfalpha)-A_m) ~~~\text{for~} X(\bfalpha) \in T_m \subseteq T, ~m=1, 2, 3, \label{DX} \\
\text{with} ~~&D_{\alpha_j}\mathbf{ J}_1(\bfalpha)=\left(D_{\alpha_j}P, D_{\alpha_j}Q\right), ~~ D_{\alpha_j}\mathbf{ J}_2(\bfalpha)=\left( D_{\alpha_j}Q, D_{\alpha_j}P\right), ~~ D_{\alpha_j}\mathbf{ J}_3(\bfalpha)=\left( D_{\alpha_j}Q, \mathbf{0} \right), \label{J_123_deriv}
\end{align}
in which $D_{\alpha_j}P$ and $D_{\alpha_j}Q$ are provided by the formulas \eqref{P_deri_4} and \eqref{Q_deri_4}.
Therefore, by the formula for $X(\bfalpha) \in T\in \mathcal{T}_h^i$ given in \eqref{aff_map} and its derivatives given
in \eqref{DX}, we introduce a piecewise velocity field $\mathbf{ V}^j$ with respect to the $j$-th design variable $\alpha_j$, $j\in\mathcal{D}$ as follows:
\begin{eqnarray}
\label{v_filed}
\mathbf{ V}^j(X) = \begin{cases}
\mathbf{ V}^j_T(X)=\mathbf{0}, & \text{if $T\notin \mathcal{T}^i_h $ }, \\
\mathbf{ V}^j_T(X)=(D_{\alpha_j} \mathbf{ J}_m(\bfalpha)) \mathbf{ J}^{-1}_m(\bfalpha)(X(\bfalpha)-A_m), & \text{if $T\in\mathcal{T}^i_h$ and $X\in T_m$}, m = 1, 2, 3.
\end{cases} \label{eq4_1}
\end{eqnarray}

We now present the properties of the velocity field by the following theorem.
\commentout{
\begin{lemma}
\label{v_property}
On each interface element $T = \bigtriangleup A_1A_2A_3 \in \mathcal{T}_h^i$, the following formulas hold for the velocity field $\mathbf{V}^j(X), ~j \in \mathcal{D}$ defined in \eqref{v_filed}:
\begin{equation}
\label{property_1}
\mathbf{ V}^j_T|_{A_iP}=\frac{\norm{X-A_i}}{\norm{P-A_i}}D_{\alpha_j}P, ~ i=1,2,~~~and~~~ \mathbf{ V}^j_T|_{A_iQ}=\frac{\norm{X-A_i}}{\norm{Q-A_i}}D_{\alpha_j}Q, ~ i=1,3.
\end{equation}
\begin{align}
 \mathbf{ V}^j_T|_{PQ}=\frac{\norm{X-Q}}{\norm{P-Q}}D_{\alpha_j}P+\frac{\norm{X-P}}{\norm{P-Q}}D_{\alpha_j}Q,& ~~ \mathbf{ V}^j_T|_{A_2Q}=\frac{\norm{X-A_2}}{\norm{Q-A_2}}D_{\alpha_j}Q, ~~\mathbf{ V}^j_T|_{A_2A_3}=\mathbf{ 0}. \label{property_2}
\end{align}
and
\begin{equation}
\label{div}
\emph{div}(\mathbf{ V}^j_{T_m})=\emph{tr}\left( (D_{\alpha_j} \mathbf{ J}_m) \mathbf{ J}^{-1}_m \right), ~~m = 1, 2, 3,
\end{equation}
\end{lemma}
\begin{proof}
Formula \eqref{div} follows directly from \eqref{v_filed}. Formulas in \eqref{property_1} and \eqref{property_2} can be verified by direct calculations.
Take the first identity in \eqref{property_2} for example, we can use \eqref{v_filed}
and \eqref{J_123_deriv} to calculate $\mathbf{ V}^j|_{T_i}, i = 1, 2$ and then show that, when restricted on $PQ$, they lead to the same formula given in the first identity in \eqref{property_2}.

\commentout{
Identities in \eqref{property_1} and \eqref{property_2} can be verified by direct calculations, and to reduce the presentation page space, we only provide a proof for the first identity in \eqref{property_1}, the proofs for the other follow similarly. Note that $\mathbf{ J}^{-1}_2(P-A_2)=(0,1)^T$. Then for any $X$ on the line $A_2P$, by \eqref{J_123}, we have
\begin{equation*}
\begin{split}
(D_{\alpha_j} \mathbf{ J}_2) \mathbf{ J}^{-1}_2(X-A_2)&= \frac{\norm{X-A_2}}{\norm{P-A_2}}(D_{\alpha_j} \mathbf{ J}_2) \mathbf{ J}^{-1}_2(P-A_2) \\
&=  \frac{\norm{X-A_2}}{\norm{P-A_2}} (D_{\alpha_j} \mathbf{ J}_2) \left(\begin{array}{c}0 \\1 \end{array}\right) \\
&= \frac{\norm{X-A_2}}{\norm{P-A_2}} D_{\alpha_j}P,
\end{split}
\end{equation*}
which establishes the first identity in \eqref{property_1}.
}
\end{proof}
}
\begin{thm}
\label{v_H1}
For any $j\in\mathcal{D}$, the velocity $\mathbf{ V}^j(X)$ defined in \eqref{v_filed} has the properties:
\begin{itemize}
  \item[\textbf{P1}:] on each interface element $T = \bigtriangleup A_1A_2A_3 \in \mathcal{T}_h^i$, there holds
 \begin{subequations}
 \label{v_iden}
 \begin{align}
\mathbf{ V}^j_T|_{A_iP}=\frac{\norm{X-A_i}}{\norm{P-A_i}}D_{\alpha_j}P, ~ i=1,2,~~~&~~~ \mathbf{ V}^j_T|_{A_iQ}=\frac{\norm{X-A_i}}{\norm{Q-A_i}}D_{\alpha_j}Q, ~ i=1,3, \label{property_1} \\
\mathbf{ V}^j_T|_{PQ}=\frac{\norm{X-Q}}{\norm{P-Q}}D_{\alpha_j}P+\frac{\norm{X-P}}{\norm{P-Q}}D_{\alpha_j}Q,& ~~ \mathbf{ V}^j_T|_{A_2Q}=\frac{\norm{X-A_2}}{\norm{Q-A_2}}D_{\alpha_j}Q, ~~\mathbf{ V}^j_T|_{A_2A_3}=\mathbf{ 0}, \label{property_2} \\
\emph{div}(\mathbf{ V}^j_{T_m})=\emph{tr}\left( (D_{\alpha_j} \mathbf{ J}_m) \mathbf{ J}^{-1}_m \right)&, ~~m = 1, 2, 3; \label{div}
\end{align}
\end{subequations}
  \item[\textbf{P2}:] $\mathbf{ V}^j \in H^1(\Omega)$ and $supp(\mathbf{ V}^j) \subseteq \bigcup_{T \in \mathcal{T}_h^i}T$;
  \item[\textbf{P3}:] when restricted on each interface edge e, $\mathbf{ V}^j(X)$ has the same direction as the edge e.
\end{itemize}
\end{thm}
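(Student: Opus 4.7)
The plan is to prove the three properties in the order P1, P2, P3, since P2 and P3 both rely on the explicit formulas established in P1. The whole argument is ultimately a careful bookkeeping exercise that leverages the specific forms of the affine maps $\mathcal{F}_m$ and the columns of $D_{\alpha_j}\mathbf{J}_m$ in \eqref{J_123_deriv}.

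For P1, I would verify each identity in \eqref{v_iden} by direct computation on the reference triangle $\hat{T}$. The observation is that on any edge $\overline{A_m B}$ of the sub-triangle $T_m$, where $B$ is one of the other two vertices, the preimage under $\mathcal{F}_m$ is an axis-parallel edge of $\hat{T}$; hence $\mathbf{J}_m^{-1}(X-A_m)$ reduces to $s\,\mathbf{e}_i$ for the standard basis vector $\mathbf{e}_i$ with $s = \|X-A_m\|/\|B-A_m\|$. Multiplying by $D_{\alpha_j}\mathbf{J}_m$ then picks out the corresponding column of $D_{\alpha_j}\mathbf{J}_m$ given in \eqref{J_123_deriv}, producing the claimed scalar multiple of $D_{\alpha_j}P$ or $D_{\alpha_j}Q$ (or $\mathbf{0}$ on $A_2A_3$, since the second column of $D_{\alpha_j}\mathbf{J}_3$ is zero). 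For the edge $PQ$ one writes a convex combination $X = sP + (1-s)Q$, and the formula from each of $T_1$ and $T_2$ produces the same expression. The divergence formula \eqref{div} is immediate: $\mathbf{V}^j_{T_m}$ is an affine function of $X$ whose linear part is the constant matrix $D_{\alpha_j}\mathbf{J}_m\mathbf{J}_m^{-1}$.

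For P2, the support statement follows straight from the definition \eqref{v_filed}, so the real work is $H^1$-regularity. Since $\mathbf{V}^j$ is affine, hence smooth, on every sub-triangle, it suffices to prove continuity across every shared edge, and then global $H^1$ regularity follows from the standard patching criterion. Internally to an interface element, continuity across $\overline{PQ}$ and across $\overline{A_2Q}$ is exactly what the formulas in \eqref{property_1}--\eqref{property_2} express, since both adjacent sub-triangles give the same restriction. Across a mesh edge shared by two interface elements $T, T'$, the key point is that the intersection point (e.g.\ $P$) is geometrically the same from both sides, and by Lemma \ref{inter_pt_deri} its derivative $D_{\alpha_j}P$ is determined entirely by the edge $\overline{A_1A_2}$ and by $\Gamma(t,\bfalpha)$; thus applying P1 from either side yields the same scalar multiple of the same vector $D_{\alpha_j}P$. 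On an edge of $T$ that is not an interface edge (such as $\overline{A_2A_3}$), the $T$-side value is zero by P1, and the value on the other side is also zero, either because that element is non-interface or because the analogous formula on that side gives zero.

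For P3, I would simply observe that on each interface edge, P1 writes $\mathbf{V}^j$ as a scalar function times $D_{\alpha_j}P$ (or $D_{\alpha_j}Q$), and Remark \ref{P_A1A2} guarantees that $D_{\alpha_j}P$ is parallel to the edge $\overline{A_1A_2}$ (respectively $\overline{A_1A_3}$ for $Q$). The main obstacle I anticipate is the patching step in P2: one must ensure that the sub-triangulation on the neighbouring interface element $T'$, which in general has a different vertex labelling and Jacobian, still produces the identical edge formula. This is resolved by the fact that the universal formulas in P1 depend only on the endpoint $A_i$, the intersection point, and the derivative of the intersection point, and none of these quantities is tied to the interior structure of a single element.
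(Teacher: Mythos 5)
Your proposal is correct and follows essentially the same route as the paper, whose proof is just a terse version of your argument: P1 by direct computation from the definition \eqref{v_filed} and the columns of $D_{\alpha_j}\mathbf{J}_m$ in \eqref{J_123_deriv}, P2 as a consequence of P1 and the definition (matching traces across sub-element and element edges), and P3 from \eqref{property_1} together with Remark \ref{P_A1A2}. Your write-up supplies the details the paper omits, including the convex-combination check on $\overline{PQ}$ and the cross-element matching via Lemma \ref{inter_pt_deri}, and these details are all sound.
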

\begin{proof}
\textbf{P1} can be verified by calculation and the definition \eqref{v_filed}. \textbf{P2} is the consequence of \textbf{P1} and the definition \eqref{v_filed}. \textbf{P3} is based on \eqref{property_1} and Remark \ref{P_A1A2}.
\end{proof}

\commentout{
We note that there are other ways to express a point $X$ in an interface element $T$ as a function
of the design variable $\bfalpha$ and they can lead to different velocity fields. The advantage of the
proposed formula \eqref{aff_map} for $X$ is that its derivatives can be calculated by the
explicit formula given in \eqref{v_filed} instead of any approximation techniques; hence, formula
\eqref{aff_map} can accurately represent how the quantities depend on spacial variable change according to the movement of the interface. Furthermore, this formulation of $X$ in terms of $\bfalpha$ naturally enables us to
propose a velocity field that can be used to compute the sensitivity efficiently since the support of this velocity field is inside the union of interface elements whose number is proportional only to $O(h^{-1})$ while
the number of all elements in the mesh is in the order of $O(h^{-2})$.
}

\subsection{Shape Derivatives of IFE Shape Functions}

\commentout{
{\color{red}These sentences are from Section 2: By the formula \eqref{c0}, we note each IFE shape function on an interface element $T \in \mathcal{T}_h^i$ is determined by $L(X)$, $c_0$ and $\mathbf{ c}=(c_i)_{i\in\mathcal{I}^-}$ which are functions of
the interface-mesh intersection points $P$, $Q$ described explicitly by \eqref{c0}-\eqref{gam_del}. Furthermore,
the rate of change of the interface-mesh intersection points $P$, $Q$ with respect to the interface $\Gamma$ in terms of
the design variable vector $\bfalpha$ can be readily established, see more details in Section {\color{red}???}. Therefore, \eqref{c0} is actually a formula
explicitly describing how each IFE shape function depends on the design variable vector $\bfalpha$ and this feature
greatly benefits our sensitivity computations in the shape optimization.}
}

In the proposed IFE method described by \eqref{discrt_obj_2}, the IFE basis functions $\phi_i, 1 \leq i \leq
\abs{\mathcal{N}_h}$ on the chosen fixed interface independent mesh are directly employed in the objective function $\mathcal{J}_h$ according to \eqref{ife_solu_alpha} and \eqref{discrt_obj_1}. By their construction described in
\eqref{ife_shapefun}-\eqref{gam_del}, the IFE basis functions change when the interface
$\Gamma(t, \bfalpha), t \in [0, 1]$ moves because of the variations in the design variable $\bfalpha$. Hence, the gradient of the objective function $\mathcal{J}_h$ in this IFE method inevitably involves the derivatives of
the IFE basis functions with respect to $\bfalpha$. By definition, each IFE basis function is a piecewise polynomial that
is a linear combination of the IFE shape functions on each element described by \eqref{ife_space_inter_element} or \eqref{non_inter_loc_space} depending on whether the element is an interface element or not. Consequently, the derivative of an IFE basis function $\phi_i$ with respect to $\bfalpha$ is zero on each non-interface element where all the shape functions are independent of $\bfalpha$, and our focus in this subsection will be the derivative of IFE shape functions with respect to $\bfalpha$ on interface elements. We note that \cite{2015NajafiSafdari,2012ZhangZhangZhu} presented similar approaches to calculate the shape derivative for special finite element shape functions.

Consider a typical interface element $T = \bigtriangleup A_1A_2A_3$ configured as in Figure \ref{interface_element}. By \eqref{ife_shapefun} and the discussions
at the beginning of this section and Section \ref{sec:DerivativeIntersectionPoints}, we express an IFE shape function $\psi_T^{int}(X)$ on $T$ as
$\psi_T^{int}(X) = \psi_T^{int}(X(\bfalpha), \bfalpha)$ to emphasize that the design variable $\bfalpha$ influences the value of $\psi_T^{int}$ not only
through the spatial variable $X$ which is a function of $\bfalpha$ according to \eqref{aff_map}, but also directly through its coefficients $c_0, \bfc$ and the coefficients of $L(X)$. However, the rate of change for an IFE shape function $\psi_T^{int}$ with respect to $\alpha_j, j \in \mathcal{D}$ through $X(\bfalpha)$
is readily known by the simple chain rule for differentiation because $\psi_T^{int}(X(\bfalpha), \bfalpha)$ depends on $X$ linearly and $\frac{\partial X}{\partial \alpha_j}$ is a velocity field already discussed in Section \ref{sec:VelocityField}. Therefore, we only need to discuss the rate of change for an IFE shape function $\psi_T^{int}$ with respect to $\alpha_j, j \in \mathcal{D}$ not through $X(\bfalpha)$, and this rate of change is referred as a shape derivative in the shape optimization literature
\cite{2003HaslingerMkinen}.

\commentout{
According to the formula for $\psi_T^{int}(X(\bfalpha), \bfalpha)$ given in \eqref{ife_shapefun}, we need to prepare
formulas for $\frac{\partial \mathbf{ c}}{\partial \alpha_j}$, $\frac{\partial  c_0}{\partial \alpha_j}$ and $\frac{\partial L}{\partial \alpha_j}$ which is the shape derivative of
$L(X(\bfalpha), \bfalpha)$. For simplicity, we adopt the abbreviation $X$ for $X(\bfalpha)$ in this subsection for the discussion of shape derivatives of IFE functions.

We now consider the shape derivatives of IFE shape functions locally on each interface element which are needed for computing the shape derivatives of IFE basis functions globally defined over the whole solution domain $\Omega$. Let $T = \bigtriangleup A_1A_2A_3$ be a typical interface element
configured as in Figure \ref{interface_element}. By \eqref{ife_shapefun}, an IFE function
$\psi_T^{int}(X)$ on $T$ is a functional of $\bfalpha$ because it is constructed with
$c_0, L(X)$, and $\bfc$ which depend on $\bfalpha$ through interface-mesh intersection points
$P$ and $Q$. Hence, we can write it as {\color{red}$\psi_T^{int}(X) = \psi_T^{int}(X(\bfalpha), \bfalpha)$}, by which, we note that the design variable $\bfalpha$ can change the value of $\psi_T^{int}$ not only
directly, but also through the spatial variable $X$ which is a function of $\bfalpha$ according to \eqref{aff_map}. We note that the enriched basis functions in \cite{2015NajafiSafdari} corresponding to the interface-mesh intersection points also have such a feature. This explicit dependence
of IFE shape functions on the design variable enables use to compute the shape derivatives of
IFE basis functions which is an important ingredient in the proposed IFE method for shape optimizations, and this makes our proposed IFE method very different from solving shape optimization problems by the standard element method in which the dependence of the shape functions on $\bfalpha$ is only through the spacial variables $X$ because the standard finite element shape functions on reference elements are independent of the interface change after the affine mapping and only the Jacobian matrix of the affine mapping depend on $\bfalpha$.

{\color{red}By definition \cite{J.Sokolowski_J.-P.Zolesio_1992}}, a shape derivative of a function is about how
$\bfalpha$ affects this function directly, not how $\bfalpha$ affects it via the spatial variable $X(\bfalpha)$. Hence,
according to its definition \eqref{ife_shapefun}, to derive the formula for the shape
derivative of an IFE function $\psi^{int}_T(X(\bfalpha), \bfalpha)$ with respect to $\alpha_j, j \in \mathcal{I}$ we need to derive formulas for computing $\frac{\partial \mathbf{ c}}{\partial \alpha_j}$, $\frac{\partial  c_0}{\partial \alpha_j}$ and $\frac{\partial L}{\partial \alpha_j}$ which is the shape derivative of
$L(X(\bfalpha), \bfalpha)$. For simplicity, we adopt the abbreviation $X$ for $X(\bfalpha)$ in this subsection for the discussion of shape derivatives of IFE functions.
}
First, by their formulas given in Section \ref{review_ife}, both $L(X)$ and $\bar{\mathbf{ n}}$ depend on the design variable $\bfalpha$ because of their dependence on the interface-mesh intersection points
$P = (x_P, y_P)$ and $Q = (x_Q, y_Q)$ that are functions of $\bfalpha$. By direct calculations, we have
\begin{align}
&\frac{\partial L}{\partial P}=\frac{(X-P)^T\bar{\mathbf{ t}}\bar{\mathbf{ n}}^T}{\norm{P-Q}}+\bar{\mathbf{ n}}, ~~\frac{\partial L}{\partial Q}=-\frac{(X-Q)^T\bar{\mathbf{ t}}\bar{\mathbf{ n}}^T}{\norm{P-Q}}, ~~\frac{\partial \bar{\mathbf{ n}}}{\partial P}=\frac{\bar{\mathbf{ t}} \mathbf{ \bar{n}}^T }{\norm{P-Q}}, ~~\frac{\partial \bar{\mathbf{ n}}}{\partial Q}=-\frac{\bar{\mathbf{ t}} \mathbf{ \bar{n}}^T }{\norm{P-Q}}, \label{Ln_PQ}
\end{align}
where $\frac{\partial L}{\partial P}=(\frac{\partial L}{\partial x_P}, \frac{\partial L}{\partial y_P})$, $\frac{\partial L}{\partial Q}=(\frac{\partial L}{\partial x_Q}, \frac{\partial L}{\partial y_Q})$ are $1$-by-$2$ matrices, and $\bar{\mathbf{ t}}=\frac{1}{\norm{P-Q}}(x_P-x_Q, y_P-y_Q)^T$ is the tangential vector of $l$,
$\frac{\partial \bar{\mathbf{ n}}}{\partial P}=(\frac{\partial \bar{\mathbf{ n}}}{\partial x_P},\frac{\partial \bar{\mathbf{ n}}}{\partial y_P})$, $\frac{\partial \bar{\mathbf{ n}}}{\partial Q}=(\frac{\partial \bar{\mathbf{ n}}}{\partial x_Q},\frac{\partial \bar{\mathbf{ n}}}{\partial y_Q})$ are $2$-by-$2$ matrices. Then, by the chain rule,
we can use \eqref{Ln_PQ} to calculate $\frac{\partial L(X, \bfalpha)}{\partial \alpha_j}$ and $\frac{\partial\bar{\mathbf{ n}}}{\partial \alpha_j}$ as follows:
\begin{align}
&\frac{\partial L(X, \bfalpha)}{\partial \alpha_j}=\frac{\partial L}{\partial P} ~D_{\alpha_j}P
                                                  +\frac{\partial L}{\partial Q} ~D_{\alpha_j}Q, ~~\frac{\partial \bar{\mathbf{ n}}}{\partial \alpha_j}=\frac{\partial \bar{\mathbf{ n}}}{\partial P} ~
D_{\alpha_j}P + \frac{\partial \bar{\mathbf{ n}}}{\partial Q} ~D_{\alpha_j}Q, \label{L_deri} 
\end{align}
in which $D_{\alpha_j}P$ and $D_{\alpha_j}Q$ are given by formulas in \eqref{P_deri_4}, and \eqref{Q_deri_4}.
\commentout{According to \eqref{L_deri}, we note that the shape derivatives of $L$ and $\bar{\mathbf{ n}}$ are decomposed into two components: (1) their derivatives with respect to interface-mesh intersection points $P$ and $Q$, and (2) the total derivatives of $P$ and $Q$ with respect to the design variables.
The derivatives of $L$ and $\bar{\mathbf{ n}}$ with respect to interface-mesh intersection points $P$ and $Q$ given in \eqref{Ln_PQ} are independent of the parametrization of the interface.}

Then, by \eqref{c0} and \eqref{gam_del}, we have
\begin{subequations}
\label{cgdb_deri}
\begin{align}
&\frac{\partial c_0}{\partial \alpha_j}  =  \mu \left( \sum_{i\in\mathcal{I}^-} \frac{\partial c_i}{\partial \alpha_j} \nabla\psi^{non}_{i,T}\cdot \bar{\mathbf{ n}}+ c_i \nabla\psi^{non}_{i,T} \cdot \frac{\partial\bar{\mathbf{ n}}}{\partial \alpha_j}+\sum_{i\in\mathcal{I}^+}v_i\nabla\psi^{non}_{i,T}\cdot  \frac{\partial\bar{\mathbf{ n}}}{\partial \alpha_j} \right), \label{c0_deri}\\
&\frac{\partial\bfgamma}{\partial \alpha_j} = \left( \nabla\psi^{non}_{i,T}\cdot \frac{\partial\bar{\mathbf{ n}}}{\partial \alpha_j} \right)_{i\in\mathcal{I}^-}, \;\;\;\;\;\;\;\frac{\partial\bfdelta}{\partial \alpha_j}=\left( \frac{\partial L(A_i)}{\partial \alpha_j} \right)_{i\in\mathcal{I}^-}, \label{gd_deri} \\
& \frac{\partial\bfb}{\partial \alpha_j} =
\left(
- \mu\frac{ \partial L(A_i)}{\partial \alpha_j} \sum_{j\in\mathcal{I}^+} \nabla\psi^{non}_{j,T} \cdot \bar{\mathbf{ n}}~ v_j - \mu L(A_i)\sum_{j\in\mathcal{I}^+} \nabla\psi^{non}_{j,T} \cdot \frac{\partial\bar{\mathbf{ n}}}{\partial \alpha_j}~ v_j
 \right)_{i\in\mathcal{I}^-}. \label{b_deri}
\end{align}
\end{subequations}
\commentout{
In addition, by \eqref{coef}, \eqref{gd_deri}, and \eqref{b_deri}, we obtain
\begin{align}
{\color{red}\frac{\partial \mathbf{c}}{\partial \alpha_j} = ??????} \label{c_deri}
\end{align}
}
Furthermore, by \eqref{c0} again, we can compute $\frac{\partial \mathbf{ c}}{\partial\alpha_j}, j \in \mathcal{D}$ from \eqref{gd_deri}, \eqref{b_deri} as follows:
\begin{equation}
\label{c_alpha_j}
\frac{\partial \mathbf{ c}}{\partial \alpha_j}=\frac{\partial \mathbf{ b}}{\partial \alpha_j}-\mu\frac{ \left[\left(\frac{\partial \bfgamma}{\partial \alpha_j}\right)^T\mathbf{ b}\bfdelta + \bfgamma^T\frac{\partial\mathbf{ b}}{\partial\alpha_j}\bfdelta + \bfgamma^T\mathbf{ b}\frac{\partial \bfdelta}{\partial \alpha_j} \right](1+\mu\bfgamma^T\bfdelta) -
\mu\bfgamma^T\mathbf{ b}\bfdelta\left[ \left( \frac{\partial \bfgamma}{\partial \alpha_j} \right)^T\bfdelta + \bfgamma^T \frac{\partial \bfdelta}{\partial \alpha_j} \right]
 }{(1+\mu\bfgamma^T\bfdelta)^2}.
\end{equation}
Finally, we use \eqref{L_deri}, \eqref{c0_deri}, and \eqref{c_alpha_j} to obtain the formula for the shape derivatives of an IFE shape function defined by \eqref{ife_shapefun} by the following formula: for every $j \in \mathcal{D}$,
\begin{equation}
\label{ife_shapefun_shape_deriv}
\frac{\partial \psi^{int}_T(X, \bfalpha)}{\partial \alpha_j} =
\begin{cases}
\Frac{\partial \psi^{int,-}_T(X, \bfalpha)}{\partial \alpha_j}  = \Frac{\partial \psi^{int,+}_T(X, \bfalpha)}{\partial \alpha_j} + \Frac{\partial c_0}{\partial \alpha_j}L(X, \bfalpha) + c_0 \Frac{L(X, \bfalpha)}{\partial \alpha_j}& \text{if} \;\; X\in \overline{T}^-, \\
\\
\Frac{\partial \psi^{int,+}_T(X, \bfalpha)}{\partial \alpha_j}  = \sum_{i\in\mathcal{I}^-}\Frac{\partial c_i}{\partial \alpha_j}\psi^{non}_{i,T}(X)& \text{if} \;\; X\in \overline{T}^+.
\end{cases}
\end{equation}

\commentout{
We note the proposed velocity field can be utilized by any fixed mesh methods for sensitivities. The particular advantage for the IFE methods on shape optimization problems is that the shape derivatives of the IFE shape functions can be calculated by explicit formulas. So next, based on Lemma \ref{inter_pt_deri}, on any interface element $T\in\mathcal{T}^i_h$, we derive those explicit formula for the derivatives of the coefficients \eqref{c0} and \eqref{coef} with respect to $\bfalpha$. Since the dependence of the coefficients are only through the mesh-interface intersection points, the total derivatives can be decoupled into two independent parts by the chain rule: the derivatives of the interface-mesh intersection points with respect to $\bfalpha$ and the derivatives of the coefficients in the IFE shape functions with respect to the intersection points. We shall see the latter one only needs to be derived once and remains the same in the whole optimization process while the former one is given by Lemma \ref{inter_pt_deri}. We emphasize that it is a difference from solving shape optimization problems using the standard finite element method in which the dependence of the shape functions on $\bfalpha$ is only through the spacial variables $X$, since the standard finite element shape functions on reference elements are independent with the interface change after the affine mapping and only the Jacobian matrix of the affine mapping depend on $\bfalpha$. We note that the enriched basis functions in \cite{2015NajafiSafdari} corresponding to the interface-mesh intersection points also have such a feature. In the formulation \eqref{ife_shapefun}, only the constructed coefficients $\mathbf{c}$, $c_0$ and the function $L(X)$ depend on $\bfalpha$ while the standard basis functions $\psi^{non}_{i,T}$ are fixed. Hence for any $\alpha_j$, $j\in\mathcal{D}$, we only need to compute $\frac{\partial \mathbf{ c}}{\partial \alpha_j}$, $\frac{\partial  c_0}{\partial \alpha_j}$ and $\frac{\partial L}{\partial \alpha_j}$. And according to the formula \eqref{c0} and \eqref{coef}, we only need:

Note that $\frac{\partial \mathbf{ c}}{\partial\alpha_j}$ can be computed from \eqref{gd_deri}, \eqref{b_deri} and the formula \eqref{coef}. Then we only need to calculate $\frac{\partial\bar{\mathbf{ n}}}{\partial \alpha_j}$. By the chain rule, for any $X$, we have

Let $\bar{\mathbf{ t}}=\frac{1}{|P-Q|}(x_P-x_Q, y_P-y_Q)^T$ be the tangential vector of $l$. Then by direct calculation, we have
\begin{equation}
\label{n_PQ}
\frac{\partial \bar{\mathbf{ n}}}{\partial P}=\frac{\bar{\mathbf{ t}} \mathbf{ \bar{n}}^T }{|P-Q|}, ~~~~~ \frac{\partial \bar{\mathbf{ n}}}{\partial Q}=-\frac{\bar{\mathbf{ t}} \mathbf{ \bar{n}}^T }{|P-Q|},
\end{equation}

Finally the analytical shape derivatives of IFE shape functions are based on \eqref{coef} and \eqref{cgdb_deri}-\eqref{L_PQ}. Here \eqref{nL_deri}-\eqref{L_PQ} indicate that the calculation of the $\frac{\partial \bar{\mathbf{ n}}}{\partial \alpha_j}$ and $ \frac{\partial L(X)}{\partial \alpha_j}$ can be decoupled into the the derivatives of intersection points with respect to $\alpha_j$ and the derivatives with respect to the intersection points \eqref{n_PQ} and \eqref{L_PQ}. Indeed the formulas \eqref{n_PQ} and \eqref{n_PQ} are essentially applicable for any interface shapes and locations. We shall see in the following of this section the advantages that the shape derivatives of the IFE shape functions can be calculated by explicit formulas and the shape functions on non-interface elements are independent with the interface change make the sensitivity calculation efficiently and accurately.
}


\subsection{The Gradient of the Discretized Objective Function}

The gradient of the objective function $\mathcal{J}_h$ is necessary for implementing the proposed IFE method with a common
minimization algorithm based on a decent direction or trust region. We now put all the preparations in the previous subsections together to derive
the formula for the gradient of the objective function $\mathcal{J}_h$ that can be executed efficiently within the IFE framework. This formula involves the total derivatives of $\mathcal{J}_h$ with respect to $\bfalpha_j,~j \in \mathcal{D}$ depending on the velocity field and these total derivatives are also referred as the material derivatives of $\mathcal{J}_h$ in the shape optimization literature \cite{2003HaslingerMkinen}. For the simplicity of presentation, we assume that the boundary condition functions $g^k_N$, $g^k_D$ and the force term $f^k$ are fixed and independent with interface change, $1\leqslant k \leqslant K$. In the following this discussion, we use $\nabla$ to denote the standard gradient operator with respect to $X$. We start from the material derivatives with respect to $\alpha_j, j \in \mathcal{D}$ of the local matrices and vectors which are used to construct $\mathbf{ A}^k(X(\bfalpha), \bfalpha)$ and $\mathbf{ F}^k(X(\bfalpha), \bfalpha)$. Their formulas are presented in the two theorems below, of which the derivation is based on Lemma 3.3 of \cite{2003HaslingerMkinen} in the direction of
the velocity field developed in Section \ref{sec:VelocityField} together with the properties in Theorem \ref{v_H1} and the shape derivatives of the IFE shape functions given by the formula \eqref{ife_shapefun_shape_deriv}

\begin{thm}\label{D_alpha_local_matrices}
On each interface element $T\in \mathcal{T}_h^i$ and each interface edge $e \in \mathcal{E}_h^i$,
we have the following formulas for the material derivatives of $\mathbf{K}_T, \mathbf{E}_e^{r_1, r_2}, \mathbf{G}_e^{r_1, r_2}$ and $\mathbf{R}_T$ with respect to $\alpha_j, j\in \mathcal{D}$:
\begin{subequations}
\begin{align}
\label{KT_deri}
D_{\alpha_j}\mathbf{ K}_T &= \left( \int_T \beta \nabla \frac{\partial\psi_{p,T}}{\partial \alpha_j} \cdot \nabla \psi_{q,T} dX \right)_{p, q\in\mathcal{I}} + \left( \int_T \beta \nabla \frac{\partial\psi_{p,T}}{\partial \alpha_j} \cdot \nabla \psi_{q,T} dX \right)_{p, q\in\mathcal{I}}^T \nonumber \\
&~~~~~~ +\left( \sum_{i=1}^3 \int_{T_i} \beta \nabla \psi_{p,T} \cdot \nabla \psi_{q,T}~ dX ~ \emph{tr}\left( (D_{\alpha_j} \mathbf{ J}_i) \mathbf{ J}^{-1}_i \right) \right)_{p, q\in\mathcal{I}}, \\
D_{\alpha_j} \mathbf{ E}^{r_1r_2}_e & =
\left(\int_e \beta \nabla \frac{\partial\psi_{p,T^{r_1}}}{\partial \alpha_j}\cdot (\psi_{q,T^{r_2}}\mathbf{ n}^{r_2}_e)  ds \right)_{p, q\in\mathcal{I}} + \left(\int_e  \beta \nabla \psi_{p,T^{r_1}}\cdot  (\frac{\partial\psi_{q,T^{r_2}}}{\partial \alpha_j}\mathbf{ n}^{r_2}_e)  ds \right)_{p, q\in\mathcal{I}} \nonumber \\
& ~~+  \Bigg( \beta^- \nabla \psi^-_{p,T^{r_1}}\cdot (\psi^-_{q,T^{r_2}}\mathbf{ n}^{r_2}_e)|_{P} - \beta^+ \nabla \psi^+_{p,T^{r_1}}\cdot (\psi^+_{q,T^{r_2}}\mathbf{ n}^{r_2}_e)|_{P}  \Bigg)_{p, q\in\mathcal{I}} \frac{ D_{\alpha_j}P\cdot(A_2-A_1)}{\norm{A_2-A_1}}, \label{DE_e} \\
D_{\alpha_j} \mathbf{ G}^{r_1r_2}_e &= \frac{\sigma^0_e}{|e|} \left(\int_e (\frac{\partial\psi_{p,T^{r_1}}}{\partial \alpha_j}\mathbf{ n}^{r_1}_e)\cdot (\psi_{q,T^{r_2}}\mathbf{ n}^{r_2}_e)  ds + \int_e  (\psi_{p,T^{r_1}}\mathbf{ n}^{r_1}_e)\cdot (\frac{\partial\psi_{q,T^{r_2}}}{\partial \alpha_j}\mathbf{ n}^{r_2}_e) ds \right)_{p, q\in\mathcal{I}}, \label{DG_e} \\
D_{\alpha_j} \mathbf{R}_T &= \left( \int_T \frac{\partial\psi_{p,T}}{\partial \alpha_j} dX + \int_T\nabla\psi_{p,T}\cdot\mathbf{ V}^j dX \right)_{p\in\mathcal{I}} +\left( \sum_{i=1}^3 \int_{T_i} \psi_{p,T}~ dX ~ \emph{tr}\left( (D_{\alpha_j} \mathbf{ J}_i) \mathbf{ J}^{-1}_i \right) \right)_{p \in\mathcal{I}}. \label{DR_T}
\end{align}
\end{subequations}
\end{thm}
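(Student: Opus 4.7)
The plan is to apply the standard shape calculus formula (Lemma 3.3 of \cite{2003HaslingerMkinen}) in the direction of the velocity field $\mathbf{V}^j$ constructed in Section \ref{sec:VelocityField}, and then exploit the piecewise-polynomial structure of the linear IFE shape functions together with the properties of $\mathbf{V}^j$ listed in Theorem \ref{v_H1} to simplify each term. Recall that for any smoothly moving domain $D(\alpha_j)$ with velocity $\mathbf{V}^j$ and any integrand $f = f(X,\alpha_j)$, the material derivative reads
\begin{equation*}
D_{\alpha_j}\int_{D(\alpha_j)} f\, dX \;=\; \int_{D(\alpha_j)} \frac{\partial f}{\partial \alpha_j}\, dX \;+\; \int_{D(\alpha_j)} \nabla f \cdot \mathbf{V}^j\, dX \;+\; \int_{D(\alpha_j)} f \,\mathrm{div}(\mathbf{V}^j)\, dX,
\end{equation*}
and an analogous one-dimensional Leibniz formula holds for edge integrals whose integrand has a moving interior discontinuity point.

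For $\mathbf{K}_T$, the first step is to split $\int_T$ as $\sum_{i=1}^3 \int_{T_i}$ over the three sub-elements introduced in Section \ref{sec:VelocityField}, since each IFE function $\psi_{p,T}$ is affine on each $T_i$. Applying the material derivative formula to each sub-element, I observe that on the interior of $T_i$ the integrand $f = \beta\nabla\psi_{p,T}\cdot\nabla\psi_{q,T}$ is a \emph{constant}, so the convective contribution $\int_{T_i}\nabla f\cdot \mathbf{V}^j\, dX$ vanishes identically; the direct derivative $\partial f/\partial\alpha_j$ is computed by the product rule using the shape derivatives \eqref{ife_shapefun_shape_deriv} and, when reassembled over the three sub-elements, gives the two $\int_T$ terms with $\partial\psi_{p,T}/\partial\alpha_j$ and $\partial\psi_{q,T}/\partial\alpha_j$ (noting that the contributions from neighboring sub-elements along interior segments cancel because $\beta\nabla\psi\cdot\bar{\mathbf{n}}$ is continuous across $l$); finally the compressibility term uses \eqref{div} to replace $\mathrm{div}(\mathbf{V}^j_{T_i})$ by $\mathrm{tr}((D_{\alpha_j}\mathbf{J}_i)\mathbf{J}_i^{-1})$. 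The same argument, with the integrand $\psi_{p,T}$ now affine rather than constant on each $T_i$, is used for $\mathbf{R}_T$; this time the convective term $\int_{T}\nabla\psi_{p,T}\cdot\mathbf{V}^j\, dX$ survives and yields the extra integral in \eqref{DR_T}.

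For the edge matrices $\mathbf{E}_e^{r_1 r_2}$ and $\mathbf{G}_e^{r_1 r_2}$, the key observation is that the edge $e$ itself is fixed in the mesh, but the interface-mesh intersection $P$ on $e$ moves and, by Remark \ref{P_A1A2}, $D_{\alpha_j}P$ is parallel to $e$ with signed speed $D_{\alpha_j}P\cdot(A_2-A_1)/\|A_2-A_1\|$. I split $\int_e = \int_{A_1P}+\int_{PA_2}$ and apply the one-dimensional Leibniz rule with a moving interior endpoint. For $\mathbf{E}_e^{r_1 r_2}$, the integrand involves $\nabla\psi_{p,T^{r_1}}$, which jumps across $l$ at $P$, so the moving-endpoint contribution produces exactly the third term of \eqref{DE_e}; the remaining integral terms come from the shape derivatives of $\psi_{p,T^{r_1}}$ and $\psi_{q,T^{r_2}}$ via \eqref{ife_shapefun_shape_deriv}. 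For $\mathbf{G}_e^{r_1 r_2}$, the integrand involves only the IFE functions themselves (not their gradients), and IFE functions are continuous across the interface line $l$; hence $\psi^-|_P = \psi^+|_P$ and the boundary/jump contribution cancels, leaving only the two shape-derivative integrals in \eqref{DG_e}.

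The main technical obstacle is the careful bookkeeping for the edge matrix $\mathbf{E}_e^{r_1 r_2}$: I must verify the sign conventions for the jump $(g^- - g^+)|_P$, the orientation of the unit tangent $(A_2-A_1)/\|A_2-A_1\|$, and the correct use of the $-$/$+$ evaluations of $\nabla\psi_{p,T^{r_1}}$ and $\psi_{q,T^{r_2}}$ at $P$, and then check that the net formula is consistent with both possible configurations of $T^{r_1}, T^{r_2}$ sharing $e$. The other delicate point is to confirm, for $\mathbf{K}_T$, that the boundary contributions along the interior interface segment $PQ$ arising from the Reynolds formula on adjacent sub-elements cancel; this follows from the continuity of $\psi_{p,T}$ across $l$ combined with the opposite outward normals of adjacent sub-elements. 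Once these points are handled, the identities \eqref{KT_deri}--\eqref{DR_T} follow by direct assembly.
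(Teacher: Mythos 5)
Your proposal is correct and follows essentially the same route as the paper: both apply the standard material-derivative formulas of Lemma 3.3 in \cite{2003HaslingerMkinen} in the direction of the velocity field $\mathbf{V}^j$, kill the convective term for $\mathbf{K}_T$ using the piecewise constancy of $\beta\nabla\psi_{p,T}\cdot\nabla\psi_{q,T}$, invoke \eqref{div} for the divergence term, and for the edge matrices use the one-dimensional version with the moving interior point $P$ together with $\mathbf{V}^j_T(A_1)=\mathbf{V}^j_T(A_2)=\mathbf{0}$ and the continuity of $\psi$ (but not of $\nabla\psi$) across $l$. One small remark: in the Lagrangian three-volume-term form you apply on each $T_i$ there are no interior boundary terms along $PQ$ to cancel, so your parenthetical appeal to the continuity of $\beta\nabla\psi\cdot\bar{\mathbf{n}}$ is unnecessary --- and would not suffice anyway, since the full integrand $\beta\nabla\psi_{p,T}\cdot\nabla\psi_{q,T}$ does jump across $l$; those jump contributions are instead absorbed exactly by the piecewise convective and divergence terms.
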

\commentout{
\begin{proof}
These formulas follow directly from the standard material derivative formulas given in Lemma 3.3 of \cite{2003HaslingerMkinen} in the direction of
the velocity field developed in Section \ref{sec:VelocityField} plus the fact that each IFE shape function is a piecewise linear polynomial. For example,
we can apply the first material derivative formula in Lemma 3.3 of \cite{2003HaslingerMkinen} to $\mathbf{ K}_T$ in \eqref{ppife_local_mat_1}, then we
obtain \eqref{KT_deri} by simplifying the result with \eqref{div} and $\frac{\partial}{\partial X} \left( \beta \nabla \psi_{k,T} \cdot \nabla \psi_{l,T} \right)=0$.
According to Theorem \ref{v_H1}, formula \eqref{DE_e} follows from applying the second formula in Lemma 3.3 of \cite{2003HaslingerMkinen} to
$\mathbf{ E}_e^{r_1r_2}$ in \eqref{ppife_local_mat_2} and then simplifying the result by \eqref{property_1} and the fact $\mathbf{ V}^j_T(A_i)=\mathbf{ 0}$, $i=1,2$. The other two formulas \eqref{DG_e} and \eqref{DR_T} follow from similar arguments.
\commentout{
For each $T\in\mathcal{T}^i_h$, according to the material derivative formula given in {\color{red}Lemma 3.3} of \cite{2003HaslingerMkinen}, we have
\begin{equation}
\begin{split}
\label{KT_deri_1}
D_{\alpha_j}\mathbf{ K}_T 
&= \left( \int_T \beta \nabla \frac{\partial}{\partial \alpha_j}\psi_{p,T} \cdot \nabla \psi_{q,T} dX + \int_T \beta \nabla \psi_{p,T} \cdot \nabla\frac{\partial}{\partial \alpha_j} \psi_{q,T} dX \right)_{p, q\in\mathcal{I}} \\
& +\left( \int_T \frac{\partial}{\partial X} \left( \beta \nabla \psi_{p,T} \cdot \nabla \psi_{q,T} \right) \cdot \mathbf{ V}^j_T dX \right)_{p, q\in\mathcal{I}} +\left( \int_T \beta \nabla \psi_{p,T} \cdot \nabla \psi_{q,T}~ \textrm{div}(\mathbf{ V}^j_T)~ dX \right)_{p, q\in\mathcal{I}},
\end{split}
\end{equation}
where $\frac{\partial\psi_{p,T}}{\partial \alpha_j}, p \in \mathcal{I}$ are the shape derivatives of the IFE shape functions with respect to $\alpha_j$ given by the formula in \eqref{ife_shapefun_shape_deriv}. Since
$\nabla \psi_{p,T} \cdot \nabla \psi_{q,T}$ are constants for any $p, q\in\mathcal{I}$, we have
$\frac{\partial}{\partial X} \left( \beta \nabla \psi_{k,T} \cdot \nabla \psi_{l,T} \right)=0$;
hence, \eqref{KT_deri} follows from \eqref{KT_deri_1} together with \eqref{div}. {\color{blue}And the formula \eqref{DR_T} for the material derivatives of $\mathbf{ R}_T$ can be derived similarly.}

For the material derivatives of $\mathbf{ E}^{r_1r_2}_e$, let $e = \overline{A_1A_2}$ in an interface
element $T$ as configured in Figure \ref{interface_element} without loss of generality. According to Remark \ref{v_e}, the geometry change restricted on interface edges can be viewed as the 1-D shape change with the two ending points fixed. Hence, applying the material derivative formula given in {\color{red}Lemma 3.3} of \cite{2003HaslingerMkinen} to $E^{r_1r_2}_e$ given in \eqref{ppife_local_mat_2}, we have
\begin{align}
D_{\partial_j} \mathbf{ E}^{r_1r_2}_e &=  D_{\partial_j} \left(\int_e  \beta \nabla \psi_{p,T^{r_1}}\cdot (\psi_{q,T^{r_2}}\mathbf{ n}^{r_2}_e) ds \right)_{p, q\in\mathcal{I}}  \nonumber \\
&= \left(\int_e  \frac{\partial}{\partial \alpha_j}  \left( \beta \nabla \psi_{p,T^{r_1}}\cdot (\psi_{q,T^{r_2}}\mathbf{ n}^{r_2}_e) \right) ds \right)_{p, q\in\mathcal{I}} +  \Bigg( \beta^- \nabla \psi^-_{p,T^{r_1}}\cdot (\psi^-_{q,T^{r_2}}\mathbf{ n}^{r_2}_e)|_{P}\mathbf{ V}^j_T(P)\cdot\mathbf{ t}_e  \Bigg)_{p, q\in\mathcal{I}} \nonumber \\
&-  \Bigg( \beta^+ \nabla \psi^+_{p,T^{r_1}}\cdot  (\psi^+_{q,T^{r_2}}\mathbf{ n}^{r_2}_e)|_{P}\mathbf{ V}^j_T(P)\cdot\mathbf{ t}_e  \Bigg)_{p, q\in\mathcal{I}}  - \Bigg( \beta \nabla \psi_{p,T^{r_1}}\cdot (\psi_{q,T^{r_2}}\mathbf{ n}^{r_2}_e)|_{A_1}\mathbf{ V}^j_T(A_1)\cdot\mathbf{ t}_e  \Bigg)_{p, q\in\mathcal{I}} \nonumber \\
&+ \Bigg( \beta \nabla \psi_{p,T^{r_1}}\cdot  (\psi_{q,T^{r_2}}\mathbf{ n}^{r_2}_e)|_{A_2}\mathbf{ V}^j_T(A_2)\cdot\mathbf{ t}_e  \Bigg)_{p, q\in\mathcal{I}}, \label{DE_e1}
\end{align}
where $\mathbf{ t}_e$ is the unit direction vector from $A_1$ to $A_2$. Then,
\eqref{DE_e} follows from applying \eqref{property_1} and the fact $\mathbf{ V}^j_T(A_1)=\mathbf{ V}^j_T(A_2)=\mathbf{ 0}$ to \eqref{DE_e1}. The formula \eqref{DG_e} for the material derivative
of $\mathbf{G}_e^{r_1, r_2}$ can be derived similarly.
}
\end{proof}
}

\commentout{
Suppose $\frac{\partial \mathcal{J}_h}{\partial \mathbf{ u}_h}$ is known, in order to compute \eqref{adj_1}, we only need to evaluate $ D_{\alpha_j}\mathbf{ A}$ and  $D_{\alpha_j}\mathbf{ \widetilde{F}}$. Since the global matrix $\mathbf{ A}$ and vector $\widetilde{\mathbf{ F}}$ are assembled by the local matrices and vectors on elements or edges, the computation for $ D_{\alpha_j}\mathbf{ A}$ and  $D_{\alpha_j}\widetilde{\mathbf{ F}}$ reduces to the derivatives of those local matrices or vectors with respect to $\alpha_j$. Again, based on the IFE framework, the non-interface elements or edges and the associated standard finite element shape functions are all independent with the interface change, of which then the derivatives with respect to $\bfalpha$ vanish. Therefore to finish the sensitivity calculation, we derive the formula for the derivatives of $\mathbf{ K}_T$, $\mathbf{ E}^{r_1r_2}_e$, $\mathbf{ G}^{r_1r_2}_e$ and $\mathbf{ F}_T$, $\mathbf{ B}_e$, $\mathbf{ C}_e$, $\mathbf{ N}_e$ given by \eqref{ppife_local_mat_1}-\eqref{ppife_local_vec_Neu}, with respect to $\alpha_j$ on the interface elements $T\in\mathcal{T}^i_h$ and edges $e\in\mathcal{E}^i_h$.

\begin{equation}
\begin{split}
\label{KT_deri_2}
D_{\alpha_j}\mathbf{ K}_T&= \left( \int_T \beta \nabla \frac{\partial}{\partial \alpha_j}\psi_{k,T} \cdot \nabla \psi_{l,T} dX \right)_{k,l\in\mathcal{I}} + \left( \int_T \beta \nabla \frac{\partial}{\partial \alpha_j}\psi_{k,T} \cdot \nabla \psi_{l,T} dX \right)_{k,l\in\mathcal{I}}^T \\
&+\left( \sum_{i=1}^3 \int_{T_i} \beta \nabla \psi_{k,T} \cdot \nabla \psi_{l,T}~ dX ~ \textrm{tr}\left( (D_{\alpha_j} \mathbf{ J}_i) \mathbf{ J}^{-1}_i \right) \right)_{k,l\in\mathcal{I}}.
\end{split}
\end{equation}
}


\begin{thm} \label{D_alpha_local_vectors}
On each interface element $T\in \mathcal{T}_h^i$ and each interface edge $e \in \mathcal{E}_h^i$,
we have the following formulas for the material derivatives of
$\mathbf{F}_T, \mathbf{B}_e, \mathbf{C}_e$ and $\mathbf{N}_e$ with respect to $\alpha_j, j\in \mathcal{D}$:
\begin{subequations}
\begin{align}
D_{\alpha_j}\mathbf{ F}_T^k&= \left( \int_T f^k \frac{\partial\psi_{p,T}}{\partial \alpha_j} dX \right)_{p\in\mathcal{I}}  + \left( \int_T \nabla(f^k\psi_{p,T})\cdot\mathbf{ V}^j_T dX \right)_{p\in\mathcal{I}} \nonumber \\
 &+\left( \sum_{i=1}^3 \int_{T_i} f^k  \psi_{p,T}~ dX ~ \emph{tr}\left( (D_{\alpha_j} \mathbf{ J}_i) \mathbf{ J}^{-1}_i \right) \right)_{p \in\mathcal{I}}, \label{FT_deri} \\
D_{\partial_j} \mathbf{ B}_e^k &= \left(\int_e \beta g_D^k \nabla \frac{\partial\psi_{p,T}}{\partial \alpha_j}\cdot \mathbf{ n}_e  ds \right)_{p\in\mathcal{I}}
\nonumber \\
&~~~~~~~~ +  \Bigg( \beta^- g_D^k \nabla \psi^-_{p,T}\cdot \mathbf{ n}_e|_{P} - \beta^+ g_D^k \nabla \psi^+_{p,T}\cdot \mathbf{ n}_e|_{P}  \Bigg)_{p\in\mathcal{I}} \frac{ D_{\alpha_j}P\cdot(A_2-A_1)}{\norm{A_2-A_1}}, \label{DBe_deri} \\
D_{\partial_j} \mathbf{ C}_e^k &= \frac{\sigma^0_e}{|e|}\left(\int_e \beta g_D^k \frac{\partial\psi_{p,T}}{\partial \alpha_j} ds \right)_{p\in\mathcal{I}} +  \frac{\sigma^0_e}{|e|}\Bigg( \beta^- g_D^k  \psi^-_{p,T}|_{P} - \beta^+ g_D^k  \psi^+_{p,T}|_{P}  \Bigg)_{p\in\mathcal{I}} \frac{ D_{\alpha_j}P\cdot(A_2-A_1)}{\norm{A_2-A_1}}, \label{DCe_deri} \\
D_{\partial_j} \mathbf{ N}_e^k &= \left(\int_e g_N^k \frac{\partial\psi_{p,T}}{\partial \alpha_j}  ds \right)_{p\in\mathcal{I}} +  \Bigg( g_N^k  \psi^-_{p,T}|_{P} - g_N^k  \psi^+_{p,T}|_{P}  \Bigg)_{p\in\mathcal{I}} \frac{ D_{\alpha_j}P\cdot(A_2-A_1)}{\norm{A_2-A_1}}. \label{DNe_deri}
\end{align}
\end{subequations}
\end{thm}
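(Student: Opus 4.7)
The plan is to mirror the proof of Theorem \ref{D_alpha_local_matrices}, applying the standard material derivative formulas from Lemma 3.3 of \cite{2003HaslingerMkinen} in the direction of the velocity field $\mathbf{V}^j$ constructed in Section \ref{sec:VelocityField}, and then simplifying using the explicit shape derivatives of the IFE shape functions given by \eqref{ife_shapefun_shape_deriv} together with the structural properties of $\mathbf{V}^j$ in Theorem \ref{v_H1}. The assumption that $f^k$, $g_D^k$, and $g_N^k$ are independent of $\bfalpha$ removes their direct derivatives and isolates the contributions coming solely from the moving integration domain and the moving IFE shape functions.

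For $\mathbf{F}_T^k$, I would apply the volume-integral version of the material derivative formula to $\int_T f^k\psi_{p,T}\,dX$. This produces three pieces: the shape derivative term $\int_T f^k(\partial\psi_{p,T}/\partial\alpha_j)\,dX$, the spatial-gradient term $\int_T \nabla(f^k\psi_{p,T})\cdot\mathbf{V}^j_T\,dX$, and the volume-change term $\int_T f^k\psi_{p,T}\,\emph{div}(\mathbf{V}^j_T)\,dX$. Because $\mathbf{V}^j_T$ is only piecewise smooth on $T$, I would split the last term into a sum over the three sub-triangles $T_m$ and invoke \eqref{div} to replace $\emph{div}(\mathbf{V}^j_{T_m})$ by $\emph{tr}((D_{\alpha_j}\mathbf{J}_m)\mathbf{J}_m^{-1})$, matching the last term of \eqref{FT_deri}. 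Formula \eqref{DR_T} of Theorem \ref{D_alpha_local_matrices} provides a consistency check, being recovered by formally setting $f^k \equiv 1$.

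For the edge quantities $\mathbf{B}_e^k, \mathbf{C}_e^k, \mathbf{N}_e^k$ on an interface edge $e = \overline{A_1A_2}$, the key observation is property \textbf{P3} of Theorem \ref{v_H1} together with Remark \ref{P_A1A2}: the field $\mathbf{V}^j$ is tangent to $e$ everywhere, vanishes at the endpoints $A_1, A_2$ by \eqref{property_1}, and equals $D_{\alpha_j}P$ at the interior intersection point $P \in e$. Consequently, the geometric motion of the integration domain is essentially one-dimensional along $e$ with $P$ as the only moving ``interior boundary''. Applying the one-dimensional material derivative formula on the two sub-segments $\overline{A_1P}$ and $\overline{PA_2}$ and then adding, the endpoint contributions at $A_1$ and $A_2$ drop out, leaving (i) the integral over $e$ of the shape derivative of the integrand, and (ii) a jump contribution at $P$ of the form $[\,\text{integrand}\,]_P\cdot(\mathbf{V}^j(P)\cdot\mathbf{t}_e)$, with $\mathbf{t}_e = (A_2-A_1)/\norm{A_2-A_1}$. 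Since $\mathbf{V}^j(P)\cdot\mathbf{t}_e = D_{\alpha_j}P\cdot(A_2-A_1)/\norm{A_2-A_1}$, this reproduces the scalar factor seen in \eqref{DBe_deri}--\eqref{DNe_deri}, while the one-sided limits of $\beta\nabla\psi_{p,T}\cdot\mathbf{n}_e$, $\beta\psi_{p,T}$, and $\psi_{p,T}$ at $P$ yield the $(\cdot)^--(\cdot)^+$ differences appearing in \eqref{DBe_deri}, \eqref{DCe_deri}, and \eqref{DNe_deri}, respectively.

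The main obstacle I anticipate is the careful bookkeeping for the jump at $P$. Although $\mathbf{V}^j$ is continuous across $P$ along $e$, the IFE integrands are not: the value $\psi_{p,T}$ is continuous but $\beta$ and $\nabla\psi_{p,T}$ jump across the interface, so the integrands of $\mathbf{B}_e^k$ and $\mathbf{C}_e^k$ have genuine jumps at $P$, whereas the $\mathbf{N}_e^k$ integrand is continuous and its written jump term actually vanishes (a consistency check for \eqref{DNe_deri}). Ensuring the correct orientation of $\mathbf{n}_e$ and $\mathbf{t}_e$, the consistent sign convention for $(\cdot)^\pm|_P$ relative to the $\overline{T}^\pm$ partition, and the vanishing of the endpoint contributions at $A_1$ and $A_2$ are the technical points needing the most care; once these are handled, the remainder reduces to routine chain-rule manipulations exactly analogous to the proof of Theorem \ref{D_alpha_local_matrices}.
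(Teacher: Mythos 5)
Your proposal is correct and follows essentially the same route as the paper: apply the material derivative formulas of Lemma 3.3 in \cite{2003HaslingerMkinen} in the direction of the velocity field $\mathbf{V}^j$, split the divergence contribution over the sub-triangles $T_m$ via \eqref{div}, and for the edge terms use that $\mathbf{V}^j$ is tangent to $e$, vanishes at $A_1,A_2$, and equals $D_{\alpha_j}P$ at $P$, so only the jump at $P$ survives with the factor $D_{\alpha_j}P\cdot(A_2-A_1)/\norm{A_2-A_1}$. Your side observations (recovering \eqref{DR_T} by setting $f^k\equiv 1$, and the vanishing of the written jump term in \eqref{DNe_deri} by continuity of $\psi_{p,T}$ at $P$) are both accurate consistency checks.
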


Now, by Lemma 3.3 in \cite{2003HaslingerMkinen} again, we have the following
standard formula for the material derivative associated to the $j$-th design variable $\alpha_j$:
\begin{align}
\label{material_deri}
&D_{\alpha_j} \mathcal{J}_h = \sum_{k=1}^K \left( \frac{\partial \mathcal{J}_h}{\partial \mathbf{ u}_h^k}\cdot D_{\alpha_j}\mathbf{ u}_h^k \right)
 +\int_{\Omega_0} \frac{\partial J_h}{\partial \alpha_j} dX + \int_{\Omega_0} \nabla  J_h\cdot \mathbf{ V}^j dX +  \int_{\Omega_0} J_h ~ \textrm{div}\left( \mathbf{ V}^j \right)dX
\end{align}
in which we have used the fact that $\frac{\partial \mathcal{J}_h}{\partial \mathbf{ u}_h^k} =
\int_{\Omega_0} \frac{\partial J_h}{\partial \mathbf{ u}_h^k} dX$,
and, as demonstrated by examples presented in the next section,
$\frac{\partial \mathcal{J}_h}{\partial \mathbf{ u}_h^k}, \nabla J_h,
\frac{\partial J_h}{\partial \alpha_j}$ and $J_h$ itself are problem dependent, but they
are usually easy to calculate for many applications. Also, we note that $\mathbf{V}^j$ is given in \eqref{v_filed} and $\textrm{div}\left( \mathbf{ V}^j \right)$ is given in \eqref{div}; hence, we
proceed to derive formula for $\left( \frac{\partial \mathcal{J}_h}{\partial \mathbf{ u}_h^k} \right)\cdot D_{\alpha_j}\mathbf{ u}_h, ~j \in \mathcal{D}$ which can be directly used in \eqref{material_deri}.

\commentout{
First, by the definition of $\mathbf{V}^j$ given in \eqref{v_filed}, $\textrm{div}\left(\mathbf{ V}^j\right)$ is zero on every non-interface element. On each interface element $T$, configured as in Figure \ref{interface_element_map} without loss of generality, by \eqref{v_filed}, we have
\begin{equation}
\label{div}
\textrm{div}(\mathbf{ V}^j_{T_i})= \textrm{tr}\left( \nabla_X \mathbf{ V}^j_{T_i} \right)=\textrm{tr}\left( (D_{\alpha_j} \mathbf{ J}_i) \mathbf{ J}^{-1}_i \right), ~~i = 1, 2, 3,
\end{equation}
where the formula for $D_{\alpha_j} \mathbf{ J}_i, i, j \in \mathcal{D}$ is given in \eqref{J_123_deriv}.
}

For $D_{\alpha_j}\mathbf{ u}_h^k, 1 \leq k \leq K, j \in \mathcal{D}$, by differentiating the IFE system in \eqref{ppife_mat_unified} with respect to $\alpha_j$, we have
the following linear system for $D_{\alpha_j}\mathbf{ u}_h^k$:
$
\mathbf{ A}^k(X(\bfalpha), \bfalpha) ~ D_{\alpha_j} \mathbf{ u}_h^k  =  D_{\alpha_j}\mathbf{ F}^k(X(\bfalpha), \bfalpha)- D_{\alpha_j} \mathbf{ A}^k(X(\bfalpha), \bfalpha) ~\mathbf{ u}_h^k(\bfalpha)
$, $1 \leq k \leq K.$
Then, by the standard process in the discretized adjoint method \cite{2000GilesPierce}, we
can compute $\left(\frac{\partial \mathcal{J}_h}{\partial \mathbf{ u}_h^k}\right)\cdot D_{\alpha_j}\mathbf{ u}_h^k$ efficiently
(especially when $\abs{\mathcal{D}}$ is large) by solving for $\mathbf{Y^k}$ from
$\big(\mathbf{ A}^k\big)^T\mathbf{ Y}^k = \frac{\partial \mathcal{J}_h}{\partial \mathbf{ u}_h^k}$, and then
\begin{equation}
\label{adj_1}
\begin{split}
\left( \frac{\partial \mathcal{J}_h}{\partial \mathbf{ u}_h^k} \right)\cdot D_{\alpha_j}\mathbf{ u}_h &=
\mathbf{Y}^k\cdot \left( D_{\alpha_j}\mathbf{ F}^k(X(\bfalpha), \bfalpha)- D_{\alpha_j} \mathbf{ A}^k(X(\bfalpha), \bfalpha) ~\mathbf{ u}_h^k(\bfalpha) \right), ~~1 \leq k \leq K,
\end{split}
\end{equation}
where $\mathbf{ A}^k$ is the matrix for the $k$-th IFE equation described in \eqref{ppife_mat_unified}.
As summarized in the next section, one advantage of the proposed IFE method is that computations for the material derivatives \eqref{material_deri}
can be very efficiently implemented in the IFE framework.

\commentout{
{\color{red}
We note that the matrices $D_{\alpha_j} \mathbf{ A}^k(X(\bfalpha), \bfalpha)$ and the vectors $D_{\alpha_j}\mathbf{ F}^k(X(\bfalpha), \bfalpha)$
are assembled from the gradient formulas of local matrices and vectors prepared in Theorem \ref{D_alpha_local_matrices} and Theorem \ref{D_alpha_local_vectors},
and their assemblages can be done very efficiently because the related element-by-element assembling procedures are performed only over interface elements whose number is in the order of $O(h^{-1})$ versus the number of all elements in the order of $O(h^{-2})$ for a Cartesian mesh $\mathcal{T}_h$. Similarly,
the computations for the last two terms in \eqref{material_deri} also need to be carried out only on interface elements intersecting with $\Omega_0$. In contrast, preparing $D_{\alpha_j}\mathbf{ F}^k(X(\bfalpha), \bfalpha)$ and $D_{\alpha_j} \mathbf{ A}^k(X(\bfalpha), \bfalpha)$ is usually expensive within the Lagrange framework where a global velocity field requires to carry out the assemblages over all elements in a mesh \cite{1994ChoiChang}, and $D_{\alpha_j}\mathbf{ F}^k(X(\bfalpha), \bfalpha)$ and $D_{\alpha_j} \mathbf{ A}^k(X(\bfalpha), \bfalpha)$ are usually prepared approximately in methods in the Eullerian framework \cite{2011DunningPeterKim,2010PengWangXing,2012ZhangZhangZhu}.
}
}

\commentout{
All in all, in contrast to the Lagrange framework, the velocity can be calculated accurately and efficiently within the Eulerian framework. In order to compute the sensitivities on the elements which contain discontinuous material properties, many approximation approaches are proposed in literature
\cite{2011DunningPeterKim,2010PengWangXing,2012ZhangZhangZhu,2015NajafiSafdari}, the IFE method based method enables us to calculate the material derivatives of the local stiffness matrix and load vector on interface elements by analytical formulas

local matrices and vectors only on interface elements. Indeed, they are assembled from those matrices and vectors given in Theorem \ref{D_alpha_local_matrices} and Theorem \ref{D_alpha_local_vectors}, respectively, element-by-element only over interface elements instead of over all elements of $\mathcal{T}_h$. {\color{red}Therefore, the IFE method proposed here is advantageous compared with those in the literature because preparing $D_{\alpha_j}\mathbf{ F}^k(X(\bfalpha), \bfalpha)$ and $D_{\alpha_j} \mathbf{ A}^k(X(\bfalpha), \bfalpha)$
are expensive within the Lagrange framework [????] with a global velocity field, and $D_{\alpha_j}\mathbf{ F}^k(X(\bfalpha), \bfalpha)$ and $D_{\alpha_j} \mathbf{ A}^k(X(\bfalpha), \bfalpha)$ are usually prepared approximately within the Eullerian framework [????]}

The matrix $D_{\alpha_j} \mathbf{ A}^k(X(\bfalpha), \bfalpha)$ and vector $D_{\alpha_j}\mathbf{ F}^k(X(\bfalpha), \bfalpha)$ can be assembled in the ways similar to those for their
counterparts $\mathbf{ A}^k(X(\bfalpha), \bfalpha)$ and $\mathbf{ F}^k(X(\bfalpha), \bfalpha)$ described
in Section \ref{review_ife}, respectively.

Also, since both the velocity field and the shape derivatives of the IFE basis functions vanish on all the non-interface elements,
by formulas \eqref{v_filed} and \eqref{div}, the last two terms in \eqref{material_deri} need to be computed only on interface elements as follows
\begin{align}
\int_{\Omega_0} \left( \frac{\partial  J_h}{\partial X}\right)^T \mathbf{ V}^j dX &= \sum_{T\in\mathcal{T}^i_h} \sum_{i=1}^3 \int_{\Omega_0\cap T_i} \left( \frac{\partial J_h}{\partial X}\right)^T (D_{\alpha_j} \mathbf{ J}_i) \mathbf{ J}^{-1}_i(X-A_i)dX, \label{J_x_v}\\
\int_{\Omega_0} J_h ~ \textrm{div}\left( \mathbf{ V}^j \right)dX &= \sum_{T\in\mathcal{T}^i_h} \sum_{i=1}^3 \int_{\Omega_0\cap T_i} J_h ~ \textrm{tr}\left( (D_{\alpha_j} \mathbf{ J}_i) \mathbf{ J}^{-1}_i \right)dX. \label{J_div}
\end{align}
}

\commentout{
In summary, the gradient formula $D_{\bfalpha}\mathcal{J}_h = (D_{\alpha_j}\mathcal{J}_h)_{j \in \mathcal{D}}$ given in \eqref{material_deri} can be implemented accurately and efficiently in the procedure described below. \\
\\
{\bf Procedure to Compute the Gradient $D_{\bfalpha}\mathcal{J}_h$}: Compute $\frac{\partial  J_h}{\partial X}$
according to the specific form of $J_h$ pertinent to the application. Then, for each $j \in \mathcal{D}$,
\begin{itemize}
\item
Compute $\int_{\Omega_0} \frac{\partial J_h}{\partial \alpha_j} dX$ according to the specific form of $J_h$ and compute
$\int_{\Omega_0}  \left( \frac{\partial  J_h}{\partial X}\right)^T \mathbf{ V}^j dX$ and
$\int_{\Omega_0} J_h ~ \textrm{div}\left( \mathbf{ V}^j \right)dX$ by \eqref{J_x_v} and \eqref{J_div}, respectively.

\item
For $k = 1, 2, \cdots, K$, compute $\frac{\partial \mathcal{J}_h}{\partial \mathbf{ u}_h^k}$ according to the specific form of
$J_h$; solve \eqref{eq_for_Y} for $\mathbf{Y}^k$; assemble $D_{\alpha_j}\mathbf{ F}^k(X(\bfalpha), \bfalpha)$ and $D_{\alpha_j} \mathbf{ A}^k(X(\bfalpha), \bfalpha)$;
compute $\left( \frac{\partial \mathcal{J}_h}{\partial \mathbf{ u}_h^k} \right)\cdot D_{\alpha_j}\mathbf{ u}_h$ by \eqref{adj_1}.

\item
Then, compute $D_{\alpha_j} \mathcal{J}_h$ by \eqref{material_deri}.

\end{itemize}
}


\subsection{Implementation}
\label{Implementation}

In this subsection, we discuss the implementation of the proposed IFE-based shape optimization method. First, we summarize the discretization of forward/inverse problems and the sensitivity computation discussed in the previous subsections into the following algorithm.

\begin{algorithm}[H]\label{alg:IFE_shape_opt_alg}
\renewcommand{\thealgorithm}{}
\caption{The IFE Shape Optimization Algorithm}
\label{algo1}
\begin{algorithmic}[1]
\State Generate a fixed mesh and choose an initial design variable $\bfalpha$.
\State Loop until convergence.
\State Prepare data:
\begin{algsubstates}
\State use the design variables to generate the parametric curve as the numerical interface;
\State find the interface-mesh intersection points, interface edges and interface elements.
\end{algsubstates}
\State Prepare matrices and vectors for the IFE systems and compute the cost function:
\begin{algsubstates}
\State use \eqref{ppife_local_mat} and \eqref{ppife_local_vec} and the IFE shape functions given in \eqref{non_inter_loc_space} and \eqref{ife_space_inter_element} to assemble matrices and vectors $\mathbf{ A}^k, \mathbf{ F}^k, 1 \leq k \leq K$ for the IFE systems \eqref{ppife_mat_unified};
\State compute the PPIFE solutions $\mathbf{u^k}, 1 \leq k \leq K$ by \eqref{ppife_mat_unified} and compute the objective function $\mathcal{J}_h (\mathbf{ u}_h^1(\bfalpha), \mathbf{ u}_h^2(\bfalpha), \cdots, \mathbf{ u}_h^K(\bfalpha), \bfalpha)$
in \eqref{discrt_obj_2}.
\end{algsubstates}
\State Compute the shape sensitivities:
\begin{algsubstates}
\State prepare the velocity fields $\mathbf{ V}^j$, $j\in\mathcal{D}$, and shape derivatives of IFE shape functions according to \eqref{v_filed} and \eqref{ife_shapefun_shape_deriv}, respectively;
\State form the material derivatives of local matrices and vectors according to Theorem \ref{D_alpha_local_matrices} and Theorem \ref{D_alpha_local_vectors}, and use them to assemble the global matrices $D_{\alpha_j}\mathbf{ A}^k(X(\bfalpha), \bfalpha)$ and vectors $D_{\alpha_j} \mathbf{ F}^k(X(\bfalpha), \bfalpha)$;
\State compute $\frac{\partial \mathcal{J}_h}{\partial \mathbf{ u}_h^k}\cdot D_{\alpha_j}\mathbf{ u}_h^k$ for $k=1,\cdots,K$, according to \eqref{adj_1};
\State compute the terms $\int_{\Omega_0}\frac{\partial J_h}{\partial \alpha_j}dX$, $\int_{\Omega_0}\nabla  J_h\cdot \mathbf{ V}^j dX$ and $\int_{\Omega_0}J_h \text{div}(\mathbf{ V}^j)dX$ according to the given shape functional;
\State compute the material derivatives of the objective function according to \eqref{material_deri}.
\end{algsubstates}
\State Update the design variable $\bfalpha$ by a chosen gradient-based optimization algorithm.
\State End loop
\end{algorithmic}
\end{algorithm}

In this proposed IFE Shape Optimization Algorithm, we note that the mesh is fixed during the optimization process, and the only mesh information needed to be updated are those interface-mesh intersection points and interface elements/edges. Consequently, the global matrices $\mathbf{ A}^k$ and vectors $\mathbf{ F}^k$ in step 4 remain the same size and algebraic structure on this fixed mesh, which is beneficial for implementation. Also, they do not need to be completely re-assembled in each iteration,
because only those global basis functions whose supports overlap with the interface elements/edges in two consecutive iterations are changed. As a result, their assemblage can be done very efficiently by just updating those entries corresponding to the global basis functions whose supports overlap with the interface elements/edges in the previous and the current iteration.

In step 5 above (computing the shape sensitivities), we emphasize that the velocity fields and the shape derivatives of IFE shape functions are only needed on interface elements, which can be implemented according to the analytical formulas \eqref{v_filed} and \eqref{ife_shapefun_shape_deriv}. These two quantities vanishing over all the non-interface elements make the whole procedure of shape sensitivity computation remarkably efficient. Firstly the integration of the terms $\int_{\Omega_0}\nabla  J_h\cdot \mathbf{ V}^j dX$ and $\int_{\Omega_0}J_h \text{div}(\mathbf{ V}^j)dX$ in the material derivative of the objective functional \eqref{material_deri} only needs to be done on interface elements intersecting $\Omega_0$ because the involved integrands all vanish on the non-interface elements. Secondly assembling the matrices $D_{\alpha_j}\mathbf{ F}^k(X(\bfalpha), \bfalpha)$ and $D_{\alpha_j} \mathbf{ A}^k(X(\bfalpha), \bfalpha)$, i.e., the material derivatives of global matrices $\mathbf{ A}^k$ and $\mathbf{ F}^k$, is also a very efficient process since it is only performed over the interface elements/edges by the explicit formulas given in Theorems \ref{D_alpha_local_matrices} and \ref{D_alpha_local_vectors}. In summary, the shape sensitivity in this algorithm is done by computations only need to be carried out over
interface elements whose number is in the order of $O(h^{-1})$ versus the number of all elements in the order of $O(h^{-2})$ in the mesh.  In contrast, preparing $D_{\alpha_j}\mathbf{ F}^k(X(\bfalpha), \bfalpha)$ and $D_{\alpha_j} \mathbf{ A}^k(X(\bfalpha), \bfalpha)$ is usually expensive within the Lagrange framework where a global velocity field requires to carry out the assemblages over all elements in a mesh \cite{1994ChoiChang}, and $D_{\alpha_j}\mathbf{ F}^k(X(\bfalpha), \bfalpha)$ and $D_{\alpha_j} \mathbf{ A}^k(X(\bfalpha), \bfalpha)$ are usually prepared approximately in methods in the Eullerian framework, see related discussions in \cite{2011DunningPeterKim,2010PengWangXing,2012ZhangZhangZhu}.

In addition, whenever necessary, one could refine the mesh easily at any point of the optimization process. Because of the Cartesian grid used by the IFE method, information in the previous mesh can be easily transformed to a new mesh through an interpolation operator. As demonstrated by examples presented in the last section, we note that the mesh refinement actually enables us to obtain better reconstruction of interface in some challenging inverse geometric problems.

Finally, we note that the proposed IFE shape optimization algorithm is highly parallelizable because computing the velocity fields $\mathbf{ V}^j$ \eqref{v_filed}, shape derivatives of IFE shape functions $\frac{\partial \phi_T}{\partial \alpha_j}$ \eqref{ife_shapefun_shape_deriv} and the material derivatives of stiffness matrices and vectors $D_{\alpha_j} \mathbf{ A}^k(X(\bfalpha), \bfalpha)$ and $D_{\alpha_j}\mathbf{ F}^k(X(\bfalpha), \bfalpha)$, i.e., the material derivatives of objective functions \eqref{material_deri} with respect to each individual design variable $\alpha_j$, are independent with each other. Hence these computations can be done very efficiently with an easy implementation on modern parallel computers.

Therefore, we believe these properties together with the optimal accuracy of PPIFE solutions \eqref{optimal_accuracy} and the resulted optimal accuracy of discretized objective functions, regardless of the interface location, make the proposed IFE shape optimization algorithm advantageous compared with those in the literature.

\section{Some Applications}
\label{sec:Applications}

In this section, we demonstrate how the general IFE method proposed in the previous section can use a fixed mesh to solve a wide spectrum of interface inverse problems posed in the format of \eqref{inter_prob_0}-\eqref{ObjFun_4_interf_optimiz} by applying this method to, but not limited to, three representative interface inverse\slash design problems: (1). the output-least-squares problem \cite{2013Cantarero,2003ChanTai,2009GockenbachKhan}; (2). the Dirichlet-Neumann problem \cite{2013BelhachmiMeftahi,2005HolderDavid,1991SattingerTracy}; and (3). the heat dissipation minimization problem \cite{2008GaoZhangZhu,2004LiStevenXie,2011ZhangLiuQiao}. The first problem uses the interior data available on the whole or a portion of $\Omega$ to reconstruct\slash design the interface, the second one recovers the interface from the data only available on $\partial \Omega$, and the last one is an application for optimal design of heat conduction fields. These examples also provide additional hints/suggestions about how to implement the proposed IFE method efficiently.

All numerical examples to be presented are posed on the domain $\Omega = (-1,1)\times (-1,1)$ on which Cartesian meshes for these numerical examples are formed by cutting $\Omega$ into $N\times N$ congruent small squares and then cutting each small square into two triangles along a diagonal line of this small square. In the following discussion, we will specify the mesh size $N$ for each example and the numerical interface curve is parameterized by a cubic spline. This choice of parametrization is based on the accuracy, versatility, and popularity of the cubic spline, and we emphasize that the fixed mesh method developed here can be readily extended to other parameterizations.

\commentout{
In the first two cases, we assume the data is available on the whole domain $\Omega$, while in the last one, the data is only assumed to be known on a sub-domain $\Omega_0\subset\Omega$. All the
results reported here are generated by numerically solving the optimization problem \eqref{J_1_h} with the well known BFGS optimization algorithm using the
gradient $D_{\bfalpha}\mathcal{J}_h$, and each sequence of the optimizations is carried out on a fixed Cartesian mesh of $\Omega$ on which the
forward interface problem is discretized by the IFE method. All the Cartesian meshes used are formed by cutting $\Omega$ into
$N\times N$ uniform small squares and then cutting each small square into two triangles along a diagonal line of this small square.
}

\commentout{
{\color{red}In the numerical examples to be presented, we use ... say something about the numerical details or later in each example.} \\

{\color{red}We should not use the following two paragraphs in the article, but Ruchi should use them together with other experiences in his thesis} \\

Before presenting each numerical example, we first describe some implementation issues which may happen in the optimization process and the corresponding techniques to address such issues. The first issue concerns the reconstructed curve intersecting with itself, which is not allowed in our case. Since the reconstructed curve is a cubic spline generated by control points, our solution for this issue is to cancel a few control points which are related to intersection parts such that the cubic spline generated by the rest of the points does not intersect itself. In this procedure, to keep the most information of the curve already constructed, we should cancel as few control points as possible. And we resample the new curve to generate a new group of control points equally spaced on the curve. See Figure \ref{restart}(a) for an illustration. Then we restart the optimization from these new control points. This restart mechanism needs to assume the interface to be recovered is a Jordan curve.

\begin{figure}[H]
  \centering
  \subfigure[The initial body fitting mesh]{
    \label{intersection} 
    \includegraphics[width=2.5in]{graph/intersection_curve-eps-converted-to.pdf}}
  \hspace{0.01in}
  \subfigure[The body fitting mesh after movement]{
    \label{large_cur} 
    \includegraphics[width=2.5in]{graph/large_curva_curve-eps-converted-to.pdf}}
  \caption{An improper velocity field causes mesh distortion for large shape change}
  \label{restart} 
\end{figure}

The second issue concerns a point at which the reconstructed curve has a very large curvature. We have observed that the objective functional is usually insensitive to the variations of the curve around points with a large curvature so that the curve gets stuck there in the optimization process. Our remedy to this issue is to
abandon every control point whose curvature is over a prescribed threshold $\tau$ and generate a new curve using the rest points. We then resample a set of
control on this new curve from which we restart the optimization, see \ref{restart}(b) for illustration. To facilitate such a mechanism, it is better for use know priori an estimate of the maximum curvature of the true interface, denoted by $Cur_{max}(\Gamma)$ so that we can choose a $\tau \geq Cur_{max}(\Gamma)$, and in our numerical experiments, we usually set $\tau$ to be much larger than $Cur_{max}(\Gamma)$.
}
\commentout{
Hence we first set up a threshold value for curvature, denoted by $\tau$. And the solution to this issue is also to cancel the control point whose curvature is over $\tau$ and generate a new curve using the rest points. Still through resampling on the new curve, we generate a new group of control points from which we restart the optimization algorithm, see \ref{restart}(b) for illustration. To equip such a mechanics in our algorithm, we need to assume that the maximum curvature of the true interface, denoted by $Cur_{max}(\Gamma)$, can not go beyond $\tau$. In real implementation, we usually set $\tau$ to be much larger than $Cur_{max}(\Gamma)$. In the following examples, unless specially specified, we always set $\tau=80$.  According to the numerical experiments, it turns out that the proposed algorithm equipped these reinitianization mechanics can recover many interfaces which even have complicated shapes.
}

\subsection{An Output-Least-Squares Problem} \label{sec:Output-Least-Squares}
In this example, we consider an interface inverse/design problem associated to the interface forward problem described by \eqref{inter_prob_0} and \eqref{jump_cond_0} with $K=1$, in which, we assume an observation data $\bar{u}$ for the solution $u^1$ to the forward problem (or a target function in optimal design application) is available on a sub-domain $\Omega_0\subseteq\Omega$, and we need to recover\slash design the location and the shape of the interface from $\bar{u}$ by solving an output-least-squares problem \cite{2003ChanTai,2001ItoKunischLi}, i.e., by optimizing the following shape functional
\begin{equation}
\label{J_1}
\mathcal{J} (u^1(\Gamma), \Gamma)=\int_{\Omega_0}(u^1-\bar{u})^2dx
\end{equation}
where $u^1$ is the solution to the interface forward problem described by \eqref{inter_prob_0}, \eqref{jump_cond_0} and \eqref{beta} with a pure Dirichlet boundary condition $g^1_D$ on the whole $\partial\Omega$. This problem appears in oil/underwater reservoirs \cite{1983Ewing,1986Yeh} and optimal designing of cooling elements in battery systems \cite{2013PengNiakhai}. And a related time dependent problem is discussed in \cite{2013HarbrechtTausch}. Applying the IFE method proposed in
\eqref{discrt_obj_2} to the inverse problem formulated in \eqref{J_1} suggests to seek the design variable
$\bfalpha^*$ that minimizes the following discrete objective functional
\commentout{
The application for this objective functional governing by \eqref{inter_prob_0} can be found in oil/underwater reservoirs \cite{1983Ewing,2001Knowles,2004KnowlesLeYan} for steady state. In this case, $u$ represents the piezometric head, $\beta$ the transmissivity and $f$ the recharge. Recently the authors in \cite{2013PengNiakhai} used the same objective functional to investigate the optimal design of cooling elements in battery systems.

We follow \eqref{discrt_obj_1} to discretize the objective functional in \eqref{J_1} by replacing $u^1$ with its IFE solution $u_h^1$ given in \eqref{ppife_mat_unified}.
Then, we employ the procedure proposed in \eqref{discrt_obj_2} to solve this interface inverse{\color{blue}\slash design} problem by seeking the design variable
$\bfalpha^*$ that minimizes the following discrete objective functional}
\begin{equation}
\begin{split}
\label{J_1_h}
&\mathcal{J}_{h} (\bfu_h^1(\bfalpha), \bfalpha)=\int_{\Omega_0} J_h(\bfu_h^1(\bfalpha), X(\bfalpha), \bfalpha)dX, \\
\text{subject to}~~~~~~ & \mathbf{ A}^1(X(\bfalpha), \bfalpha)\bfu_h^1(\alpha) - \bfF^1(X(\bfalpha), \bfalpha) =\mathbf{ 0},
\end{split}
\end{equation}
where, expressing the IFE solution $u_h^1(\bfalpha) =u_h^1(\mathbf{ u}_h^1(\bfalpha),X(\bfalpha),\bfalpha)$ in the format given in \eqref{ife_solu_alpha}, we have
\begin{eqnarray}
\begin{split}\label{J_1_h_2}
J_h(\bfu_h^1(\bfalpha), X(\bfalpha), \bfalpha) &= \big({\tilde J}_h(\bfu_h^1(\bfalpha), X(\bfalpha), \bfalpha)\big)^2, \\
\text{with~~} {\tilde J}_h(\bfu_h^1(\bfalpha), X(\bfalpha), \bfalpha) &= \sum^{|\mathring{\mathcal{N}}_h|}_{i=1} u_i^1(\bfalpha) \phi_{i}(X(\bfalpha), \bfalpha) +\sum^{|\mathcal{N}_h|}_{i=|\mathring{\mathcal{N}}_h|+1}g^1_{D}(X_i)\phi_i(X(\bfalpha), \bfalpha) -\bar{u}. \\
\end{split}
\end{eqnarray}
We have shown in \eqref{optimal_shape_fun_eq_2} that this discretized objective function has the optimal second order accuracy to approximate the continuous one regardless of the interface location and shape. According to \eqref{J_1_h_2}, the evaluation of $J_h(\bfu_h^1(\bfalpha), X(\bfalpha), \bfalpha)$ is straightforward and it is obvious that
\begin{equation} \label{eq:J_1_DJDX_DJDalpha}
\begin{split}
&\nabla J_h= 2{\tilde J}_h(\bfu_h^1(\bfalpha), X(\bfalpha), \bfalpha)\left( \sum^{|\mathring{\mathcal{N}}_h|}_{i=1} u_i^1(\bfalpha)
\nabla\phi_{i}(X(\bfalpha), \bfalpha) +\sum^{|\mathcal{N}_h|}_{i=|\mathring{\mathcal{N}}_h|+1}g^1_{D}(X_i)
\nabla\phi_i(X(\bfalpha), \bfalpha) - \nabla\bar{u}\right), \\
&\frac{\partial  J_h}{\partial \alpha_j} = 2{\tilde J}_h(\bfu_h^1(\bfalpha), X(\bfalpha), \bfalpha)\left( \sum^{|\mathring{\mathcal{N}}_h|}_{i=1} u_i^1(\bfalpha)
\frac{\partial\phi_{i}(X(\bfalpha), \bfalpha)}{\partial \alpha_j} +\sum^{|\mathcal{N}_h|}_{i=|\mathring{\mathcal{N}}_h|+1}g^1_{D}(X_i)
\frac{\partial\phi_i(X(\bfalpha), \bfalpha)}{\partial \alpha_j}\right),
\end{split}
\end{equation}
where $\bar{u}$ is assumed to be optimization independent and the shape derivatives of the global IFE basis $\phi_{i}(X(\bfalpha), \bfalpha)$ are zero on all the non-interface elements, but on every
interface element $T$, $\nabla\phi_{i}(X(\bfalpha), \bfalpha)$ and $\frac{\phi_{i}(X(\bfalpha), \bfalpha)}{\partial \alpha_j}, j \in \mathcal{D}$ can be computed according to \eqref{ife_shapefun} and \eqref{ife_shapefun_shape_deriv}, respectively.
Furthermore, a direct calculation leads to
\commentout{because $\mathcal{J}_{h} (\bfu_h^1(\bfalpha), \bfalpha)$ is a quadratic function in terms of $\mathbf{ u}_h^1$, we have the following formula for computing $\frac{\partial \mathcal{J}_h}{\partial \mathbf{ u}_h}$ efficiently in the IFE frame work:}
\begin{align}
&\mathcal{J}_h=  \left(\begin{array}{c} \mathbf{ u}_h^1 \\ \mathbf{ g}^1_D \end{array}\right)^T\mathbf{ M} \left(\begin{array}{c} \mathbf{ u}_h^1 \\ \mathbf{ g}^1_D \end{array}\right) - 2 \left(\begin{array}{c} \mathbf{ u}_h^1 \\ \mathbf{ g}^1_D \end{array}\right)^T \bar{\mathbf{ u}}+ \bar{\mathbf{ u}}^T\bar{\mathbf{ u}}, ~~~\frac{\partial \mathcal{J}_h}{\partial \mathbf{ u}_h^1} = \mathbf{ M}_0 \left(\begin{array}{c} \mathbf{ u}_h^1 \\ \mathbf{ g}^1_D \end{array}\right) -\bar{\mathbf{ u}}_0,
\label{J_1_h_grad_1} \\
\text{where}~~&\mathbf{ M}=\left( \int_{\Omega_0} \phi_i\phi_j dX \right)_{i=1,j=1}^{|\mathcal{N}_h|,|\mathcal{N}_h|} \in \mathbb{R}^{|\mathcal{N}_h|\times|\mathcal{N}_h|}, ~~~~ \bar{\mathbf{ u}}=\left( \int_{\Omega_0} \bar{u}\phi_i dX \right)_{i=1}^{|\mathcal{N}_h|} \in \mathbb{R}^{|\mathcal{N}_h|\times1}, \label{J_1_h_grad_2}
\end{align}
and $\mathbf{ M}_0$, $\bar{\mathbf{ u}}_0$ are formed by the first $|\mathring{\mathcal{N}}_h|$ columns of $\mathbf{ M}$ and $\bar{\mathbf{ u}}$, respectively. Formulas above confirm the observation that the computations for $\frac{\partial \mathcal{J}_h}{\partial \mathbf{ u}_h^k}, \nabla J_h,
\frac{\partial J_h}{\partial \alpha_j}$ and $J_h$ itself are problem dependent but they are usually straightforward to calculate within the IFE framework. These preparations can then be utilized in the proposed IFE Shape Optimization Algorithm presented in Section \ref{Implementation}.


\begin{table}[H]
\begin{center}
\footnotesize
\renewcommand{\arraystretch}{1.5}
  \begin{tabular}{ | @{}Sc | @{}Sc | @{}Sc | @{}Sc | }
    \hline
    Cases & $\beta$                                 & Interface $S$ and initial guess & Data $\bar{u}$ \\ \hline
    {\bf Case 1} & $\left.\begin{array}{c} \beta^-=1 \\ \beta^+=20 \end{array}\right.$ &   $\left.\begin{array}{c} S=(x^2+y^2)^2(1+0.8\sin{(6\arctan{(y/x)})})-0.1 \\ S_0=(x+0.6)^2+(y+0.2)^2-(\pi/9)^2 \end{array}\right.$ & $\begin{aligned}\bar{u}=&S/\beta^s~\textrm{in}~\Omega^s \\  &s=\pm  \end{aligned}$ \\ \hline
    {\bf Case 2} & $\left.\begin{array}{c} \beta^1=1~~ \beta^2=10 \\ \beta^3=100 \end{array}\right.$ & $\left.\begin{array}{c} S=4\sin(\pi x)\cos(\pi y +\pi/2)-2 \\ S^1_0=64x^2+144(y+0.5)^2-\pi^2 \\ S^2_0=64x^2+144(y-0.5)^2-\pi^2 \end{array}\right.$ & $\begin{aligned}\bar{u}=&S/\beta^i  ~\textrm{in}~\Omega^i \\  &i=1,2,3  \end{aligned}$ \\ \hline
    {\bf Case 3} & $\left.\begin{array}{c} \beta^-=1 \\ \beta^+=10 \end{array}\right.$ & $\left.\begin{array}{c} S=r-1, ~ \textrm{where} ~ r=(16x^2+64(y-0.4)^2)/\pi^2 \\ S_0=(x-0.4)^2+(y-0.2)^2-(\pi/6.28)^2 \end{array}\right.$  & $\begin{aligned} & \bar{u}  =  \frac{1024}{\pi^4\beta^s}(r^{\frac{5}{2}}-1) \\
    & +\frac{1024}{\pi^4\beta^-}  ~\textrm{in}~ \Omega_0^s, ~ s=\pm.
\end{aligned}$  \\
    \hline
  \end{tabular}
  \caption{Configuration for the Output-Least-Squares Problem}
  \renewcommand{\arraystretch}{1}
\end{center}
\label{cases_for_output_least_squares_problem}
\end{table}
We now present three specific cases for this interface inverse/design problem whose key data are described in Table \ref{cases_for_output_least_squares_problem}. In this table, $S(x,y) = 0$ is the target curve $\Gamma$ to be recovered that is plotted as a dotted curve (in red color) in the related figures. We use the BFGS optimization algorithm \cite{J.Nocedal_S.Wright_2006} in step 6 of the IFE Shape Optimization Algorithm presented in Section \ref{Implementation}, for which, $S_0(x, y) = 0$ is the initial curve that is plotted as a solid curve (in blue color) in the related figures as all other presented approximate curves in the BFGS iterations. \\

\noindent
{\bf Case 1}: The data ${\bar u}(X)$ is given on the whole $\Omega$. The numerical
curve is a parametric cubic spline with 20 control points, the initial curve is a simple circle but the target curve has a star shape representing a certain complexity.
In order to capture the complicated geometry, especially the six petals, we implement the algorithm on a $120\times120$ mesh.
Some approximate curves generated in the BFGS iterations are presented in Figure \ref{case2:cir_to_star} from which we can see a quick evolution of the numerical
curve towards to the target curve for this inverse\slash design problem even with a complicated geometry, and this suggests a benefit of the accurate gradient formula available for the proposed IFE method.

\begin{figure}[H]
\label{test1_1}
  \centering
    \includegraphics[width=4.7in]{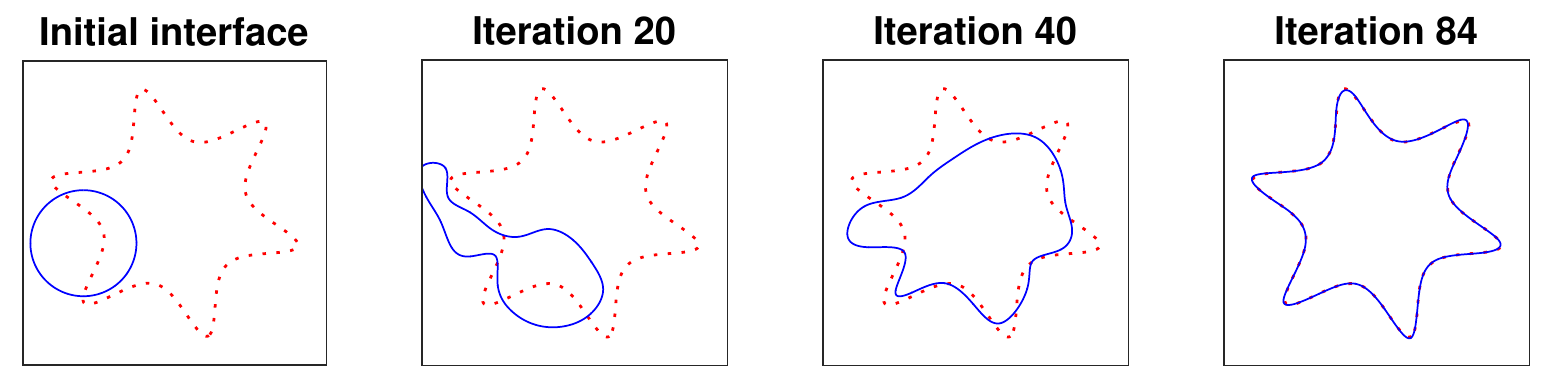}
      \caption{Optimization process for case 1 }
      \label{case2:cir_to_star} 
\end{figure}


\noindent\textbf{Case 2}: We demonstrate how the proposed algorithm can handle an interface inverse\slash design problem whose target interface
consists of multiple closed curves. For this purpose and for simplicity, we consider the case in which interface $\Gamma$ is formed by the two simple curves such that $S(x,y)=0$. We denote the sub-domain inside the upper-left dotted curve (in red color) by $\Omega^1$, the sub-domain inside the lower-right dotted curve (in red color) by $\Omega^2$, and denote sub-domain outside these two closed dotted curve by $\Omega^3$, see Figure \ref{case3:cir_to_multi_interface}. The
interface problem described by \eqref{inter_prob_0} and \eqref{jump_cond_0} and its corresponding IFE discretization can be readily modified to suit the present interface configuration in which the parameter $\beta$ is a piecewise constant function such that its value on $\Omega_i$ is $\beta^i$, $i=1,2,3$. The data $\bar{u}$ for this problem is given on the whole domain $\Omega$ on which a $80\times80$ mesh is utilized. Each numerical curve component is a parametric cubic spline with 15 control points and 30 control points in total. As demonstrated in Figure \ref{case3:cir_to_multi_interface}, the approximate curves by the proposed IFE method evolve from the initial approximate curve components $S_0^1$ and $S_0^2$ to the target curve components after 110 iterations. We notice that the numerical curve component started from $S_0^1$ converges to the exact curve component much faster than that started from $S_0^2$. After 10 iterations, the first numerical
curve component is already quite close to the target curve, while the evolution of the second numerical curve component is obviously less. We believe
the objective function is more sensitive to the design variables for the first numerical curve component than the second because the jump $\beta^3/\beta^1$ is much larger than $\beta^3/\beta^2$ in this example, and the gradient in the proposed IFE method is capable to capture this kind of subtle dependence of the
objective function on the design variables.
\begin{figure}[H]
\label{test1_3}
  \centering
    \includegraphics[width=4.7in]{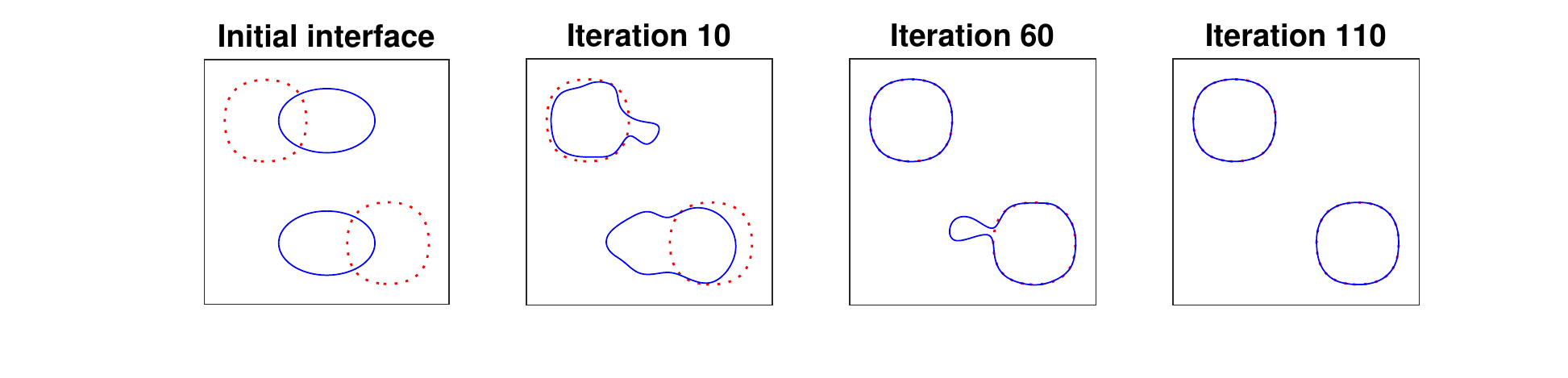}
      \caption{Optimization process for case 2 }
          \label{case3:cir_to_multi_interface} 
\end{figure}

\noindent\textbf{Case 3}: The data function $\bar u$ is given in proper sub-domain $\Omega_0=[-0.5,1]\times[0,1]$ in
the upper-right of $\Omega$ illustrated in Figure \ref{case4:partial_domain}, with $\Omega_0^s = \Omega_0 \cap \Omega^s, s = \pm$. We also implement the algorithm on a $80\times80$ mesh for this example. The numerical curve is a parametric cubic spline with 20 control points.
As presented in Figure \ref{case4:partial_domain}, the numerical curve converges in about 80 iterations. We observe that the converged numerical
curve is a much better approximation to the target interface curve $\Gamma$ inside $\Omega_0$ than outside, and we believe this is a reasonable consequence
of the available data function $\bar u$ given only on $\Omega_0$, and we think this example suggests again that the gradient in the proposed IFE method can capture the nature of the interface inverse problem in accordance with the available data.
\begin{figure}[H]
\label{test1_4}
  \centering
    \includegraphics[width=4.7in]{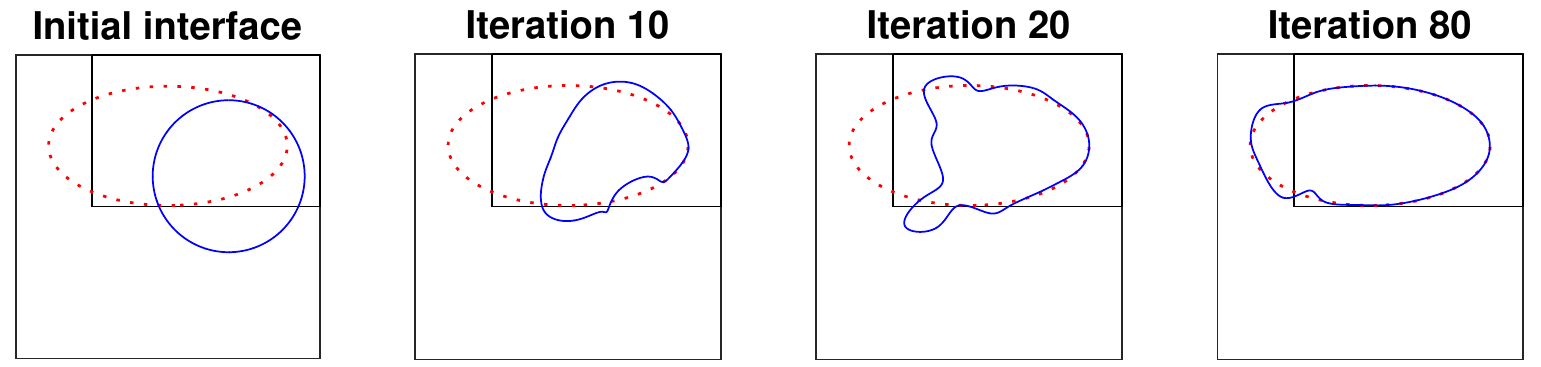}
      \caption{Optimization process for case 3}
      \label{case4:partial_domain} 
\end{figure}


\subsection{The Dirichlet-Neumann Problem with a Single Measurement}
In this group of numerical examples, we apply the propose IFE method to the popular but challenging inverse Dirichlet-Neumann problem in which
we try to recover the interface $\Gamma$ from one Neumann data $g_N$ provided on the boundary for an interface forward problem of the elliptic equation described by \eqref{inter_prob_0}-\eqref{beta} with a pure Dirichlet boundary condition. This type of inverse problems have a wide range of applications in electronic impendence tomography (EIT) \cite{2013BelhachmiMeftahi,2005HolderDavid,1998Lionheart} where one wishes to detect a material interface by injecting the voltage potential $g_D$ on $\partial\Omega$ and measuring the current density $g_N$ on (or a portion of) $\partial\Omega$. When the charge source $f=0$, it is referred as the Calder\'on's inverse conductivity problem \cite{2006Calderon} which is well-known ill-conditioned since only the data on the boundary $\partial \Omega$ is available for the reconstruction of $\Gamma$.

\commentout{
By the interface problem with the Dirichlet boundary condition, one can consider the so called Dirichlet-to-Neumann operator $\Lambda_{\beta}:H^{\frac{1}{2}}(\partial\Omega) \rightarrow H^{-\frac{1}{2}}(\partial\Omega)$ such that
\begin{equation}
\label{D_N_map}
\Lambda_{\beta}: u|_{\partial\Omega}=g_D \rightarrow \frac{\partial u}{\partial \mathbf{ n}}|_{\partial\Omega} = g_N.
\end{equation}
Kohn and Vogelius \cite{1984KohnVogelius} proved that the operator \eqref{D_N_map} can uniquely determine the piecewise analytical coefficients $\beta$. Then the analyticity assumption was weakened by a series of articles \cite{1996Nachman,1987SylvesterUhlmann}. Recently, the authors in \cite{2006Astala} proved the uniqueness by only assuming the coefficients $\beta^s\in L^{\infty}(\Omega^s)$, $s=-, +$ and bounded away from zero and infinity. If $\beta$ is assumed to be a piecewise $C^2$ function and only the coefficient $\beta^-$ on $\Omega^-$ included inside $\Gamma$ is the unknown, then the uniqueness of $\Gamma$ and $\beta^-$ can be also guaranteed from the operator \eqref{D_N_map} \cite{1988Isakov}.

Our focus here is the inverse interface problem for recovering the interface $\Gamma$ from a pair of Dirichlet-Neumann data function $g_D$ and $g_N$ assuming the values of the piecewise constant coefficient $\beta$ are known.
}
We formulate this inverse problem as a shape optimization problem with a Kohn-Vogelius type functional \cite{1987KohnVogelius,2012AntonioAlfredo}:
\begin{equation}
\label{J_2_DN}
\mathcal{J}(u^1(\Gamma),u^2(\Gamma), \Gamma)=\int_{\Omega}|u^1 - u^2|^2dX,
\end{equation}
where as in \cite{2013BelhachmiMeftahi}, $u^1$ and $u^2$ are the solutions of the following interface forward problems:
\begin{alignat*}{2}
  \begin{aligned} & \begin{cases}
 -\nabla\cdot(\beta\nabla u^1)=f,  \\
[u^1]|_{\Gamma}=0,\\
[\beta\nabla u \cdot\mathbf{ n}]|_{\Gamma}=0,  \\
  u^1=g^1_D = g_D, ~~~ \textrm{on}\; \partial\Omega, \\
  \end{cases}\\
  \end{aligned}
    & \hskip 2em &
  \begin{aligned} & \begin{cases}
-\nabla\cdot(\beta\nabla u^2)=f,  \\
[u^2]|_{\Gamma}=0,\\
[\beta\nabla u^2 \cdot\mathbf{ n}]|_{\Gamma}=0,  \\
u^2 = g^2_D = g_D, ~~~ \textrm{on}\; \partial\Omega_D, ~~\frac{\partial u^2}{\partial\mathbf{ n}}=g_N^2, ~~~ \textrm{on}\; \partial\Omega_N,
  \end{cases} 
  \end{aligned}
\end{alignat*}
and $\int_{\Omega} u^2 dx =u_0$ needs to be imposed when $\partial\Omega_N = \partial \Omega$.
\commentout{
The global uniqueness for this inverse problem has been established for several classes of interfaces, such as the convex cylinders or unions of discs in \cite{1990IsakovPowell} and convex polyhedrons in \cite{1994BarcelFabes}. A local uniqueness result is given in \cite{1995Alessandrini} by assuming some conditions on boundary data which guarantee $\nabla u$ has zero index on $\Gamma$.

Again, we follow \eqref{discrt_obj_1} to discretize the objective functional in {\color{blue} \eqref{J_2_DN}} by replacing $u^k, k = 1, 2$ with their IFE solutions $u_h^k, k = 1, 2$ given in \eqref{ppife_mat_unified}, and
}
Again, we employ the IFE method proposed in \eqref{discrt_obj_2} to solve this interface inverse problem by seeking the design variable $\bfalpha^*$ that minimizes the following objective function
\begin{equation}
\begin{split}
\label{J_2_h}
&\mathcal{J}_{h} (\bfu_h^1(\bfalpha), \bfu_h^2(\bfalpha), \bfalpha)=\int_{\Omega_0} J_h(\bfu_h^1(\bfalpha), \bfu_h^2(\bfalpha), X(\bfalpha), \bfalpha)dX, \\
\text{subject to}~~~~~~ & \mathbf{ A}^k(X(\bfalpha), \bfalpha)\bfu_h^k(\alpha) - \bfF^k(X(\bfalpha), \bfalpha) =\mathbf{ 0},~~k = 1, 2,
\end{split}
\end{equation}
where $J_{h}(\bfu_h^1(\bfalpha), \bfu_h^2(\bfalpha), X(\bfalpha), \bfalpha) = \big({\tilde J}_{h}(\bfu_h^1(\bfalpha), \bfu_h^2(\bfalpha), X(\bfalpha), \bfalpha)\big)^2$ with
\begin{equation}
\label{J_2_h_2}
{\tilde J}_{h}(\bfu_h^1(\bfalpha), \bfu_h^2(\bfalpha), X(\bfalpha), \bfalpha) = \sum^{|\mathring{\mathcal{N}}_h|}_{i=1} u^1_i \phi_{i}(X) +\sum^{|\mathcal{N}_h^m|}_{i=|\mathring{\mathcal{N}}_h|+1}g_{D}(X_i)\phi_i(X) - \sum^{|\mathcal{N}^m_h|}_{i=1} u^2_i \phi_{i}(X).
\end{equation}
\commentout{
We note that $\mathcal{J}_{h} (\bfu_h^1(\bfalpha), \bfu_h^2(\bfalpha), \bfalpha)$ can be evaluated quickly by matrix-vector operations as follows:
\begin{align}
&\mathcal{J}_{h} (\bfu_h^1(\bfalpha), \bfu_h^2(\bfalpha), \bfalpha) = \left[ \left(\begin{array}{c} \mathbf{ u}^1_h \\ \mathbf{ g}^1_D \end{array}\right) - \left(\begin{array}{c} \mathbf{ u}^2_h \\ \mathbf{ g}^2_D \end{array}\right) \right]^T \mathbf{ M}  \left[ \left(\begin{array}{c} \mathbf{ u}^1_h \\ \mathbf{ g}^1_D \end{array}\right) -  \left(\begin{array}{c} \mathbf{ u}^2_h \\ \mathbf{ g}^2_D \end{array}\right) \right], \label{J_2_h_matrix_version} \\
&\mathbf{ g}^1_D=\left( g_D(X_i) \right)^{|\mathcal{N}_h|}_{i=|\mathring{\mathcal{N}_h}|+1}, ~~\mathbf{ g}^2_D=\left( g_D(X_i) \right)^{|\mathcal{N}|_h}_{i=|\mathcal{N}^m_h|+1},~~\mathbf{ M}=\left( \int_{\Omega} \phi_i\phi_j dX \right)_{i=1,j=1}^{|\mathcal{N}_h|,|\mathcal{N}_h|}.
\end{align}
}
By a similar argument to \eqref{optimal_shape_fun_eq_3}, we can show this discretized shape functional still has the optimal second order accuracy for approximating the original one on an fixed mesh. Also, similar to \eqref{eq:J_1_DJDX_DJDalpha}-\eqref{J_1_h_grad_2} in the output-least-squares problem discussed in Section \ref{sec:Output-Least-Squares}, formulas for $\nabla J_h$, $\frac{\partial J_h}{\partial \alpha_j}, j \in \mathcal{D}$ as well as
$\frac{\partial\mathcal{J}_{h}}{\partial \mathbf{ u}^1_h}$ and $\frac{\partial\mathcal{J}_{h}}{\partial \mathbf{ u}^2_h}$ can be readily derived and implemented
in the IFE framework, and these preparations can then be used in the proposed IFE Shape Optimization Algorithm presented in Section \ref{Implementation}.

\commentout{

Also, formulas similar to those in \eqref{eq:J_1_DJDX_DJDalpha} can be easily derived to compute $\frac{\partial J_h}{\partial X}$ and
$\frac{\partial J_h}{\partial \alpha_j}, j \in \mathcal{D}$. Furthermore, by direct calculations, we have
\begin{align}
&\frac{\partial\mathcal{J}_{h}}{\partial \mathbf{ u}^1_h}=2 \mathbf{ M}_1 \left[\left(\begin{array}{c} \mathbf{ u}^1_h \\ \mathbf{ g}^1_D \end{array}\right)  - \left(\begin{array}{c} \mathbf{ u}^2_h \\ \mathbf{ g}^2_D \end{array}\right) \right] , ~~~ \textrm{and} ~~~
\frac{\partial\mathcal{J}_{h}}{\partial \mathbf{ u}^2_h}=2\mathbf{ M}_2 \left[ \left(\begin{array}{c} \mathbf{ u}^2_h \\ \mathbf{ g}^2_D \end{array}\right) - \left(\begin{array}{c} \mathbf{ u}^1_h \\ \mathbf{ g}^1_D \end{array}\right) \right], \label{J_2_h_grad} \\
&\mathbf{ M}_1=\left( \int_{\Omega} \phi_i\phi_j dX \right)_{i=1,j=1}^{|\mathring{\mathcal{N}}_h|,|\mathcal{N}_h|}, ~~~ \textrm{and}~~~ \mathbf{ M}_2=\left( \int_{\Omega} \phi_i\phi_j dX \right)_{i=1,j=1}^{|\mathcal{N}^m_h|,|\mathcal{N}_h|}. \label{J_2_h_matrices}
\end{align}
}


For the Dirichlet-Neumann inverse problem, we report $3$ experiments that are configured with the target curve $S(x, y) = 0$ and exact solution to the
interface forward problem $u(x, y)$ given in Table \ref{cases_for_DN_problem}. We note that $u(x, y)$ given in this table is used only to generate the
Dirichlet and Neumann boundary data for the related inverse problem. As before, the BFGS algorithm \cite{J.Nocedal_S.Wright_2006} is employed to carry out the shape optimization described by \eqref{J_2_h} according to the proposed IFE Shape Optimization Algorithm, for which, $S_0(x,y) = 0$ given in Table \ref{cases_for_DN_problem} is the initial curve that is
plotted as a solid curve (in blue color) in the related figures as all other presented approximate curves in the BFGS iterations. In addition, we refine the mesh once the optimization has stalled at a certain numerical curve, i.e., the reconstructed interface curve is not moving, to obtain better reconstruction.

\begin{table}[H]
\begin{center}
\footnotesize
\renewcommand{\arraystretch}{1.5}
  \begin{tabular}{ | @{}Sc | @{}Sc | @{}Sc | @{}Sc | }
    \hline
    Cases & $\beta$                                 & Interface $S$ and initial guess & Exact $u$ \\ \hline
    {\bf Case 1} & $\left.\begin{array}{c} \beta^-=1 \\ \beta^+=10 \end{array}\right.$ & $\left.\begin{array}{c} S=r-1, ~ \textrm{where} ~ r=(100(x-0.4)^2+36(y+0.3)^2)/\pi^2 \\ S_0=(x-0.1)^2+y^2-(\pi/4)^2 \end{array}\right.$  & $\begin{aligned} & u  =  \frac{3600}{\pi^4\beta^s}(r^{\frac{5}{2}}-1) \\
    & +\frac{3600}{\pi^4\beta^-}  ~\textrm{in}~ \Omega^s ~ s=\pm
\end{aligned}$  \\ \hline
    {\bf Case 2} & $\left.\begin{array}{c} \beta^-=1 \\ \beta^+=2 \end{array}\right.$ & $\left.\begin{array}{c} S=(2((x+0.5)^2+y^2)-x-0.2)^2-((x+0.5)^2+y^2)+0.3 \\ S_0=64(x-0.5)^2+16y^2-\pi^2 \end{array}\right.$ & $\begin{aligned}u=&S/\beta^s  ~\textrm{in}~\Omega^s, \\  &s=\pm  \end{aligned}$ \\ \hline
    {\bf Case 3} & $\left.\begin{array}{c} \beta^-=1, \\ \beta^+=2 \end{array}\right.$ & $\left.\begin{array}{c} S=\sin{(\pi x)}+\frac{\pi}{1.5}y+0.1 \\ S_0=y+0.15/\pi \end{array}\right.$  & $\begin{aligned} u  =&S/\beta^s  ~\textrm{in}~\Omega^s, \\  &s=\pm
\end{aligned}$  \\
    \hline
  \end{tabular}
  \caption{Configuration for the Dirichlet-Neumann Problem}
  \renewcommand{\arraystretch}{1}
\end{center}
\label{cases_for_DN_problem}
\end{table}

\noindent\textbf{Case 1}: The Neumann data is given on the whole $\partial \Omega$. The numerical
curve is a parametric cubic spline with 20 control points. Some representative approximate curves generated in the optimization are plotted as solid curves (with blue color) in Figure \ref{examp2_case1} in which the dotted curve (in red color) is the target curve to be recovered. These numerical results demonstrate that the propose IFE method can handle a large shape change, as illustrated in Figure \ref{examp2_case1} from the initial interface to the one generated by the third iteration. We note that such a large shape change often causes mesh distortion when body fitting mesh is used, but the proposed IFE method totally avoid this issue by the using a fixed interface independent mesh. The first 30 iterations are generated on a $80\times80$ mesh. The final result is generated on a $320\times320$ mesh. Clearly, with a finer mesh we can obtain more accurate reconstruction. The numerical curve quickly converges to the target curve after about $80$ iterations, and this demonstrates again the benefit of
the fact that the objective function in the proposed algorithm is a good approximation of the continuous objective functional defined by \eqref{J_2_DN} and the gradient of the objective function in the proposed algorithm provides a good sensitivity with respect to design variables in the numerical curve.

\commentout{
In the first case, we let the true interface be an ellipse $100(x-0.4)^2+36(y+0.3)^2=\pi^2$ plotted in red color in Figure \ref{examp2_case1}, and let the Dirichlet data and Neumann data be generated by the function \eqref{example_u} with $a=\pi/10$, $b=\pi/6$ and $\beta^-=1$, $\beta^+=10$. We start the BFGS iteration from an initial curve which is a circle of radius $\pi/4$ centered at $(0.1, 0)$, see the blue curve in the first plot of Figure \ref{examp2_case1}. {\color{blue}The numerical
curve is a parametric cubic spline with 20 control points.} Some representative approximate curves in the optimization are provided in Figure \ref{examp2_case1} with blue color. These numerical results demonstrate that the propose IFE method can handle large shape change, as illustrated in Figure \ref{examp2_case1} from the initial interface to the one generated by the second iteration. We note that such a large shape change often causes mesh distortion if body fitting mesh is used, but the proposed IFE method totally avoid this issue by the using a fixed interface independent mesh. We also observe that the numerical curve quickly converges to the target curve after about $80$ iterations, and this demonstrates again the benefit of
the fact that the objective function in the proposed algorithm is a good approximation of the continuous objective functional defined by
\eqref{J_2_DN} and the gradient of the objective function in the proposed algorithm provides a good sensitivity with respect to design variables in the numerical curve.

And 20 control points are used in the optimization. The reconstruction process is given by the Figure \ref{examp2_case1}. As we mentioned before, our algorithm can handle large shape change, as illustrated in the Figure \ref{examp2_case1} from the initial interface to the second iteration. The large shape change will yield mesh distortion if body fitting mesh is used. Our algorithm can totally eliminate this issue by the using the IFE method on the interface independent mesh. However we observe that after the first iteration, the curve intersects itself. So we use the mechanic described above to restart the optimization by a new curve as shown on the iteration 3 in Figure \ref{examp2_case1}. The reconstruction curve quickly find the location of the true interface after only one iteration. It indicates that our formulas to compute the sensitivity are very accurate. And the reconstructed curve converges to the true interface after about 80 iterations.
However, as shown by the last graph in Figure \ref{examp2_case1}, we observe that the convergence on the right side of the true interface is a little better than the left side. We guess this phenomenon is due to the fact that the right side of the true interface is more close to the boundary on which the data is only available.
}
\begin{figure}[H]
  \centering
    \includegraphics[width=4.7in]{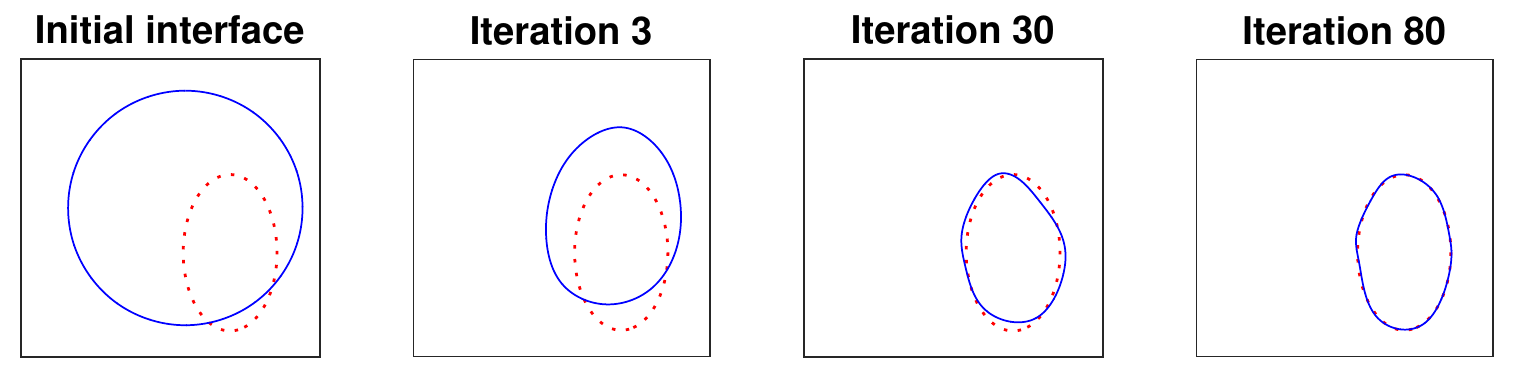}
      \caption{Reconstruction process for case 1}
       \label{examp2_case1}
\end{figure}

\commentout{
\noindent\textbf{Case 2} In the second case, we consider a non-conical but still convex interface, which is given by a level set function $S(x,y)=0$, where $(x-0.4)^4+(y-0.3)^4=(\pi/8)^4$. It is a square-like curve, as shown on the upper-right in $\Omega$ by Figure \ref{examp2_case2}. And the boundary data $g_N$ and $g_D$ are given by the function
\begin{equation}
\label{example2_u_2}
u(x,y) =\left\{
\begin{aligned}
&\frac{S(x,y)}{\beta^1},  &\; (x,y)\in \Omega^1,\\
&\frac{S(x,y)}{\beta^2}, & \; (x,y) \in \Omega^2,
\end{aligned}
\right.
\end{equation}
where $\beta^-=1$, $\beta^+=2$ and $\Omega^1$ is the sub-domain inside the true interface $S(x,y)=0$. And the initial interface is a circle $x^2+y^2=(\pi/4)^2$. We use 20 control points for the optimization. The initial mesh size is 80-by-80; we uniformly refine it to be 160-by-160 after 44 iterations and to be 320-by-320 after 70 iterations, as shown by Figure \ref{examp2_case2}. The reconstructed interface converges to the exact one after about 145 iterations. And we can clearly see that the reconstructed curve can find outline of the exact interface even on a relatively coarse mesh. However finer meshes have to be used to push the reconstruction more precise. The similar behavior was also observed in \cite{2015GiacominiPantz} in which the authors dynamically increased the number of elements especially near the interface during the optimization process.

\begin{figure}[H]
  \centering
    \includegraphics[width=4.7in]{graph/test2_2/test2_2-eps-converted-to.pdf}
      \caption{Reconstruction process for case 2}
       \label{examp2_case2} 
\end{figure}
}


\noindent\textbf{Case 2}: We now consider a more difficult Dirichlet-Neumann interface inverse problem whose exact solution interface curve $\Gamma$ is
non-conical and non-convex with a kidney-like shape plotted as dotted curve (in red color) in Figure \ref{examp2_case3}, and, to the best of our knowledge, there is no general theory to guaranty the uniqueness of the solution to this inverse problem with only one single pair of Dirichelt and Neumann data. The Neumann data is given on the whole $\partial \Omega$.
The numerical curve is a parametric cubic spline with 20 control points. In this case, we present 4 plots in Figure \ref{examp2_case3}: the first one is the initial guess, and the 2nd, 3rd and 4th plots are obtained on a $80\times80$, $160\times160$ and $320\times320$ mesh, respectively. Even on a relatively coarse $80\times80$ mesh, our algorithm can capture the basic feature of the target curve to be recovered. Again, we can observe that finer mesh can push the numerical curve to the exact target curve, but this process takes far more iterations than \textbf{Case 1}. We believe this is caused by the challenging nature of this interface inverse problem whose exact solution is non-convex; nevertheless, the proposed IFE method still produces an approximate solution quite satisfactory to a certain extend.
\begin{figure}[h]
  \centering
    \includegraphics[width=4.7in]{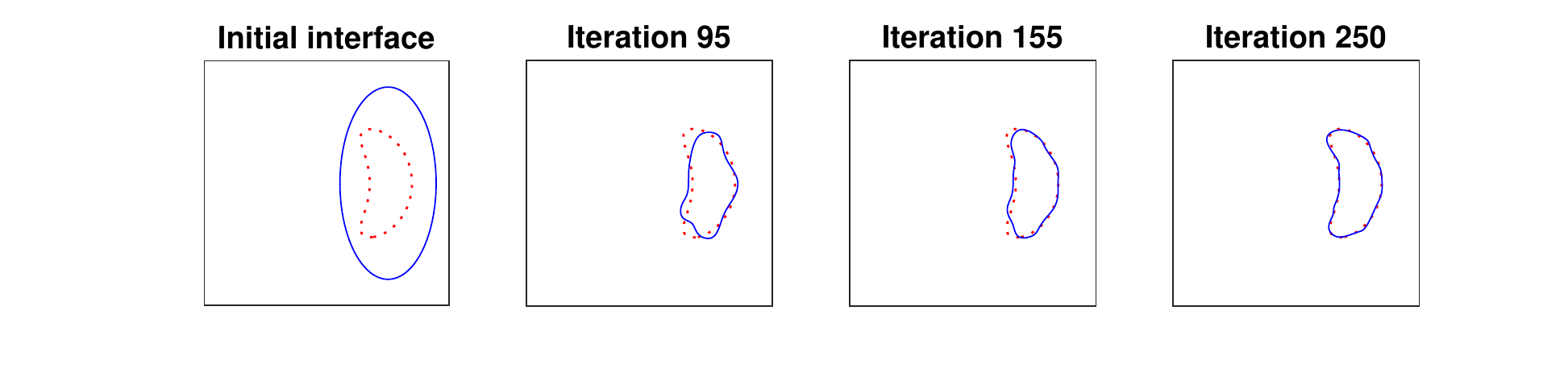}
      \caption{Reconstruction process for case 2}
       \label{examp2_case3} 
\end{figure}
\commentout{
 We also use 20 control points for the optimization. Again the initial mesh size is 80-by-80; and it is refined to be 160-by-160 and 320-by-320 after 95 iterations and 155 iterations. On a coarse mesh, we could see the reconstructed curve can recover the left side convex part, but it is difficult to recover the right-side non-convex part and the two corners. And it can significantly improve the reconstruction by refining the mesh twice. This case is more challenging than the previous ones in the sense that it takes much more iterations to converge especially on the non-convex part and the two corners. Actually, in theory, researchers have only shown the global uniqueness for discs and convex polygons \cite{1994BarcelFabes,1990IsakovPowell}, which, in another point of view, suggests the difficulties for reconstructing non-convex interfaces. But the numerical result indicates our algorithm is also successful even for this non-convex interface.
}


\noindent\textbf{Case 3}: In this case, the Neumann data is provided only on a proper subset of the boundary
$\partial \Omega$. Specifically, the true interface $\Gamma$ is the level set $S(x,y)=0$ plotted as the dotted curve (in red color) in Figure \ref{examp2_case4} that
separates $\Omega$ into two sub-domains $\Omega^-$ and $\Omega^+$ below and above $\Gamma$, and the Neumann data function $g_N$ is given only on the lower and upper edge of the square domain $\Omega$. The numerical interface is a 1-D cubic spline $y=y(t)$, $t\in[-1,1]$ with 10 control points whose end points match the exact interface. The first plot in Figure \ref{examp2_case4} shows the initial guess and  the 2nd, 3rd and 4th plots are obtained on a $80\times80$, $160\times160$ and $320\times320$ mesh, respectively. Again, the algorithm can produce a quite good reconstruction even on a coarse mesh ($80\times80$) to a certain extend. The last plot in Figure \ref{examp2_case4} shows that the numerical curve after $140$ iterations matches the exact curve well, and this demonstrates
that the proposed IFE method can treat a Dirichlet-Naumann interface inverse problem that has a limited Neumann data measured on part of the boundary
of $\Omega$. We also test the case in which the Neumann data is on the left and right boundary of $\Omega$ instead of the lower and upper edges, but the result is not as satisfactory as the one presented here.

\commentout{with $\beta^-=1$, $\beta^+=2$. Since the true interface $\Gamma$ has a simple 1-D geometry, i.e., it can be represented by $y=-1.5/\pi(\sin{(x)}+0.1)$,
we represent the numerical interface by a 1-D cubic spline $y=y(t)$, $t\in[-1,1]$ {\color{red}with 10 control points} whose end points match the exact interface. Consequently, the design variables are the $y$-coordinates of the control points. We start the BFGS iteration from the horizontal line $y=-0.15/\pi$ plotted in blue color in the first plot of Figure \ref{examp2_case4} which contains some numerical curves constructed in the optimization iterations.
The last plot in Figure \ref{examp2_case4} shows that the numerical curve after $140$ iterations matches the exact curves well, and this demonstrates
that the proposed IFE method can treat a Dirichlet-Naumann interface inverse problem that has a limited Neumann data measured on part of the boundary
of $\Omega$.

{\color{blue}We also test the case that the Neumann data is on the left and right boundary of $\Omega$ instead of the lower and upper edges. However the result is not as satisfactory as the one presented here. We think this is due to the fact the concave down or up parts are difficult to detect on the boundary.} {\color{red}Some comments about its limitations such as when it might fail????}

The evolution of the reconstructed interface is given by the Figure \ref{examp2_case4}. The initial mesh size is 80-by-80; and it is uniformly refined to be 80-by-80 and 160-by-160 after 41 iterations and 94 iterations, respectively. According to the numerical results, the algorithm can quickly find the location of the true interface; but it takes much more iterations to improve the convergence to the exact interface. The similar convergence behavior is one of the features of the ill-conditioned inverse problems, which is due to the fact that objective functional is very unsensitive to the errors.
}
\begin{figure}[H]
  \centering
    \includegraphics[width=4.7in]{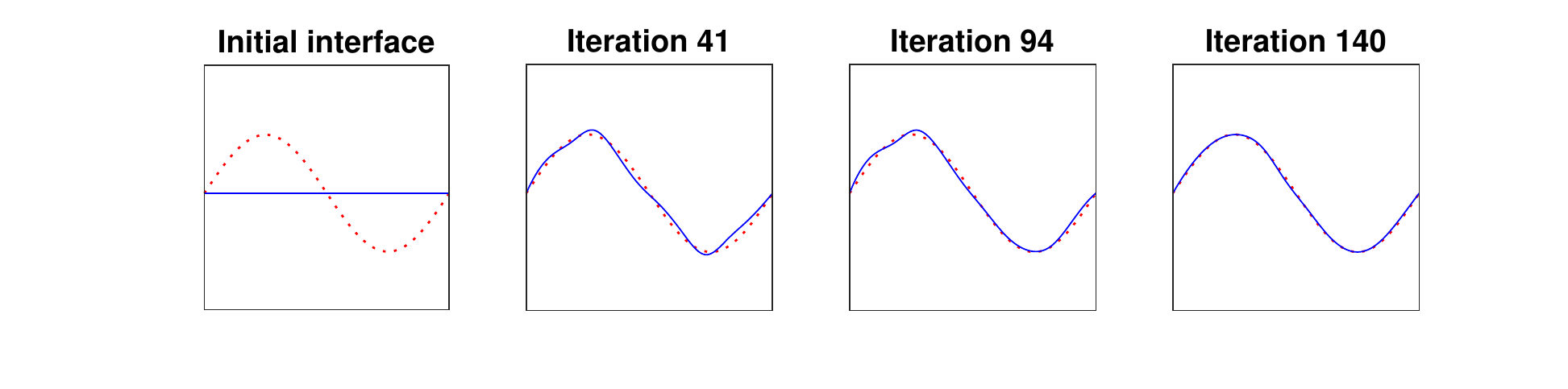}
      \caption{Reconstruction process for case 3}
       \label{examp2_case4} 
\end{figure}



\subsection{The Heat Dissipation Problem}
We now consider an application of the proposed IFE method to an optimal design problem for a heat system in which the goal is to minimize the overall heat dissipation by optimally distributing two materials in a domain \cite{2008GaoZhangZhu,2006Gersborg,2011ZhangLiuQiao}. This thermal design problem has wide applications such as cooling fins \cite{2001AttetkovVolkov,2002SasikumarBalaji} and high-conductivity channel of electronic components \cite{1997Bejan}.
\commentout{
Similar shape optimization problems are also very popular in elasticity equations, i.e., minimizing the elastic compliance \cite{2004AllaireJouveToader,2012Shojaee}. Besides, in \cite{2007Pedregal}, the authors investigated the existence to this type of optimal design problem by introducing the so called Div-Curl Young measures and considering a suitable relaxed version of the original optimization problem. Roughly speaking, they proved the existence of the infimum in a generalized sense.
}

In the steady heat conduction situation, this design problem is to find an optimal curve $\Gamma^*$ separating two chosen materials that can minimize the following objective functional \cite{2008GaoZhangZhu}:
\begin{equation}
\label{J_3}
\mathcal{J} (u^1(\Gamma), \Gamma)=\int_{\Omega} \nabla u^1\cdot (\beta\nabla u^1) dx \text{~~~subject to~~~} |\Omega^1|\leqslant \theta |\Omega|,
\end{equation}
where $u^1$ is the solution to the interface problem described by \eqref{inter_prob_0}-\eqref{jump_cond_0} with $K = 1$ with a Dirichlet boundary condition,
$\Omega^1$ is the sub-domain filled with the high conductivity material, and $\theta\in(0,1)$ is prescribed design parameter. By the proposed IFE method
\eqref{discrt_obj_2}, we seek a design variable $\bfalpha^*$ that minimizes the following objective function
\begin{equation}
\begin{split}
\label{J_3_h}
&\mathcal{J}_{h} (\bfu_h^1(\bfalpha), \bfalpha)=\int_{\Omega_0} J_h(\bfu_h^1(\bfalpha), X(\bfalpha), \bfalpha)dX, \\
\text{subject to}~~~~~~ & \mathbf{ A}^1(X(\bfalpha), \bfalpha)\bfu_h^1(\alpha) - \bfF^1(X(\bfalpha), \bfalpha) =\mathbf{ 0}, \text{~~and~~} |\Omega^1|\leqslant \theta |\Omega|,
\end{split}
\end{equation}
where
\begin{equation}
\label{J_3_h_0_fun}
J_{h} (\bfu_h^1(\bfalpha),X(\bfalpha),\bfalpha) = \beta \left|\sum^{|\mathring{\mathcal{N}}_h|}_{i=1} u_i^1 \nabla\phi_{i}(X) +\sum^{|\mathcal{N}_h|}_{i=|\mathring{\mathcal{N}}_h|+1}g_{D}(X_i) \nabla\phi_i(X)\right|^2.
\end{equation}
Since the objective functional involves gradients, following the idea in derivation \eqref{optimal_shape_fun_eq_3}, we can show that the discretized functional can approximate the true shape functional with an optimal first order accuracy independent of the interface shape and location. Also, similar to \eqref{eq:J_1_DJDX_DJDalpha}-\eqref{J_1_h_grad_2} again, formulas can be derived for $\mathcal{J}_{h}(\bfu_h^1(\bfalpha), \bfalpha)$, $\frac{\partial J_h}{\partial \alpha_j}, j \in \mathcal{D}$ and
$\frac{\partial \mathcal{J}_{h}}{\partial \mathbf{ u}_h^1}$ within the IFE framework. In particular, we have $\nabla J_h = {\bf 0}$.
\commentout{
The objective function $\mathcal{J}_{h}(\bfu_h^1(\bfalpha), \bfalpha)$ can be evaluated efficiently by matrix-vector operations as follows:
\begin{equation}
\label{J_3_h_matrix_form}
\mathcal{J}_{h}(\bfu_h^1(\bfalpha), \bfalpha) = \left(\begin{array}{c} \mathbf{ u}_h^1 \\ \mathbf{ g}_D \end{array}\right)^T \mathbf{ K} \left(\begin{array}{c} \mathbf{ u}_h^1 \\ \mathbf{ g}_D \end{array}\right)
\end{equation}
where
\begin{equation}
\label{stiffness_mat}
\mathbf{ K}=\left(\begin{array}{cc} \mathbf{ K}_{11} & \mathbf{ K}_{12} \\ \mathbf{ K}_{12} & \mathbf{ K}_{22} \end{array}\right) \in \mathbb{R}^{|\mathcal{N}_h|\times|\mathcal{N}_h|}
\end{equation}
is the global stiffness matrix in block form according to the vector partition in \eqref{J_3_h_matrix_form}. Since
$\phi_i, 1\leq i \leq \abs{\mathcal{N}_h}$ are linear IFE basis functions, by \eqref{J_3_h_0_fun}, it is obvious that $\frac{\partial J_h}{\partial X} = {\bf 0}$,
and a formula similar to the one in \eqref{eq:J_1_DJDX_DJDalpha} can be easily derived to compute
$\frac{\partial J_h}{\partial \alpha_j}, j \in \mathcal{D}$. Furthermore, by direct calculations, we can see
\begin{equation}
\label{J_3_h_grad}
\frac{\partial \mathcal{J}_{h}}{\partial \mathbf{ u}_h^1} = \mathbf{ K}_{11} \mathbf{ u}_h + \mathbf{ K}_{12} \mathbf{ g}_D.
\end{equation}
}
These preparations can then be employed in the proposed IFE Shape Optimization Algorithm together with the SQP (sequential quadratic programming) method \cite{J.Nocedal_S.Wright_2006} to carry out the constrained optimization numerically.

\commentout{
 to compute the gradient $D_{\bfalpha}\mathcal{J}_h$ according to the formula \eqref{material_deri}, and the gradient is used in a method such as the SQP (sequential quadratic programming) method \cite{J.Nocedal_S.Wright_2006} to carry out the constrained optimization numerically.
}

We test the proposed IFE method on a specific design problem configured in the domain $\Omega$ that contains a design independent heat source $f=-1$
on a center square $[-0.1,0.1]\times[-0.1,0.1]$, the boundary temperature is fixed to be $u=0$ and $\theta=0.5$, see the illustration in Figure \ref{test3_1}.
The two materials separated by the curve $\Gamma$ are such that $\beta^- = 1$ and $\beta^+=10^{-3}$. We start the SQP iteration from a circle $x^2+y^2=0.82^2$ plotted
as a solid curve (in blue color) in Figure \ref{optimal_design}(a), and the numerical curve in the optimization is a parametric cubic spline with 20 control points. We use the $160\times160$ mesh in this example. After 28 iterations, the proposed algorithm generates a design shown in Figure \ref{test3_1}(b) whose patten is very similar to the one reported in \cite{2008GaoZhangZhu}.

\begin{figure}[H]
  \centering
  \subfigure[The heating conditions and initial guess]{
    \label{heat_condition_initial_guess} 
    \includegraphics[width=1.8in]{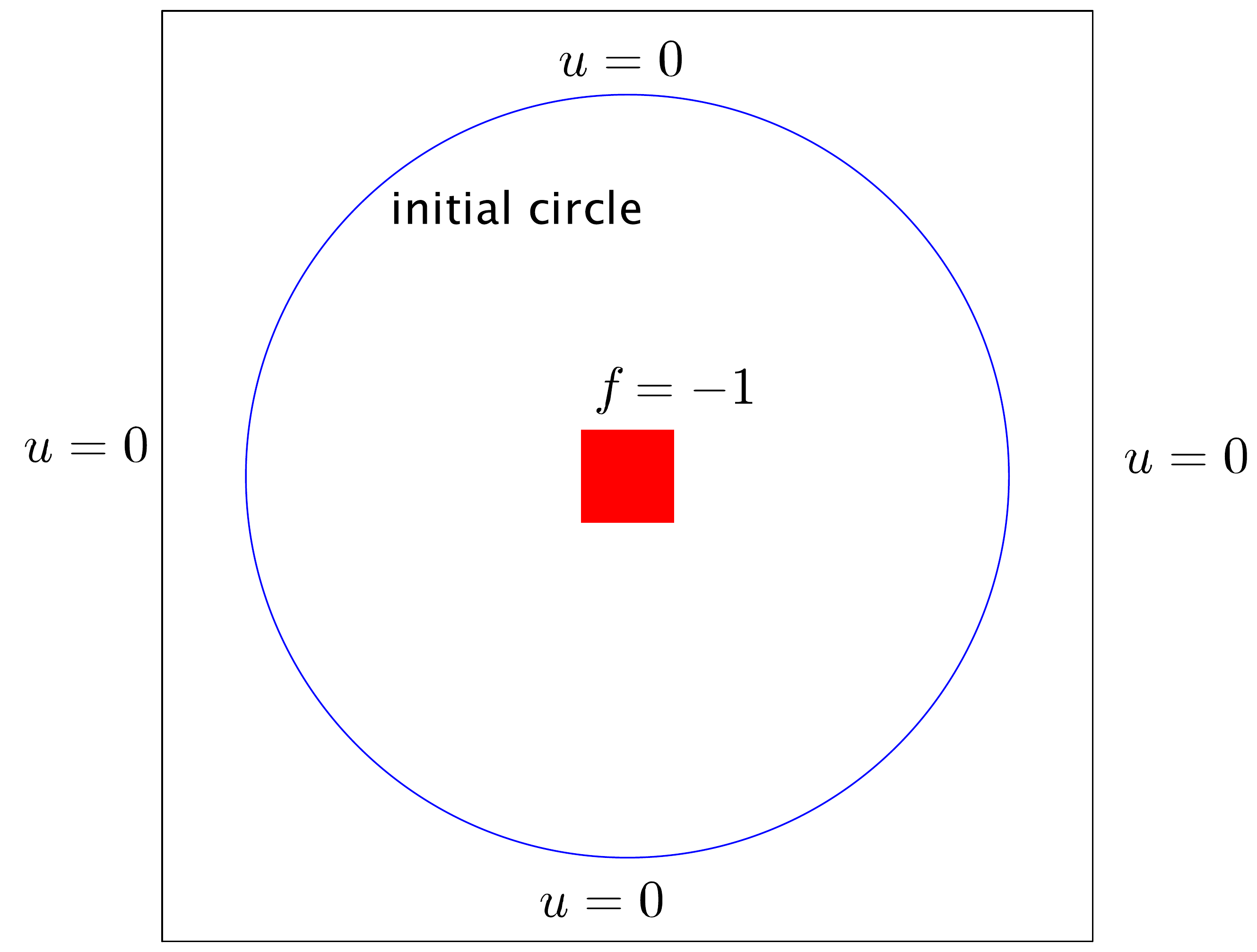}}
  \hspace{0.01in}
  \subfigure[The optimal design]{
    \includegraphics[width=1.35in]{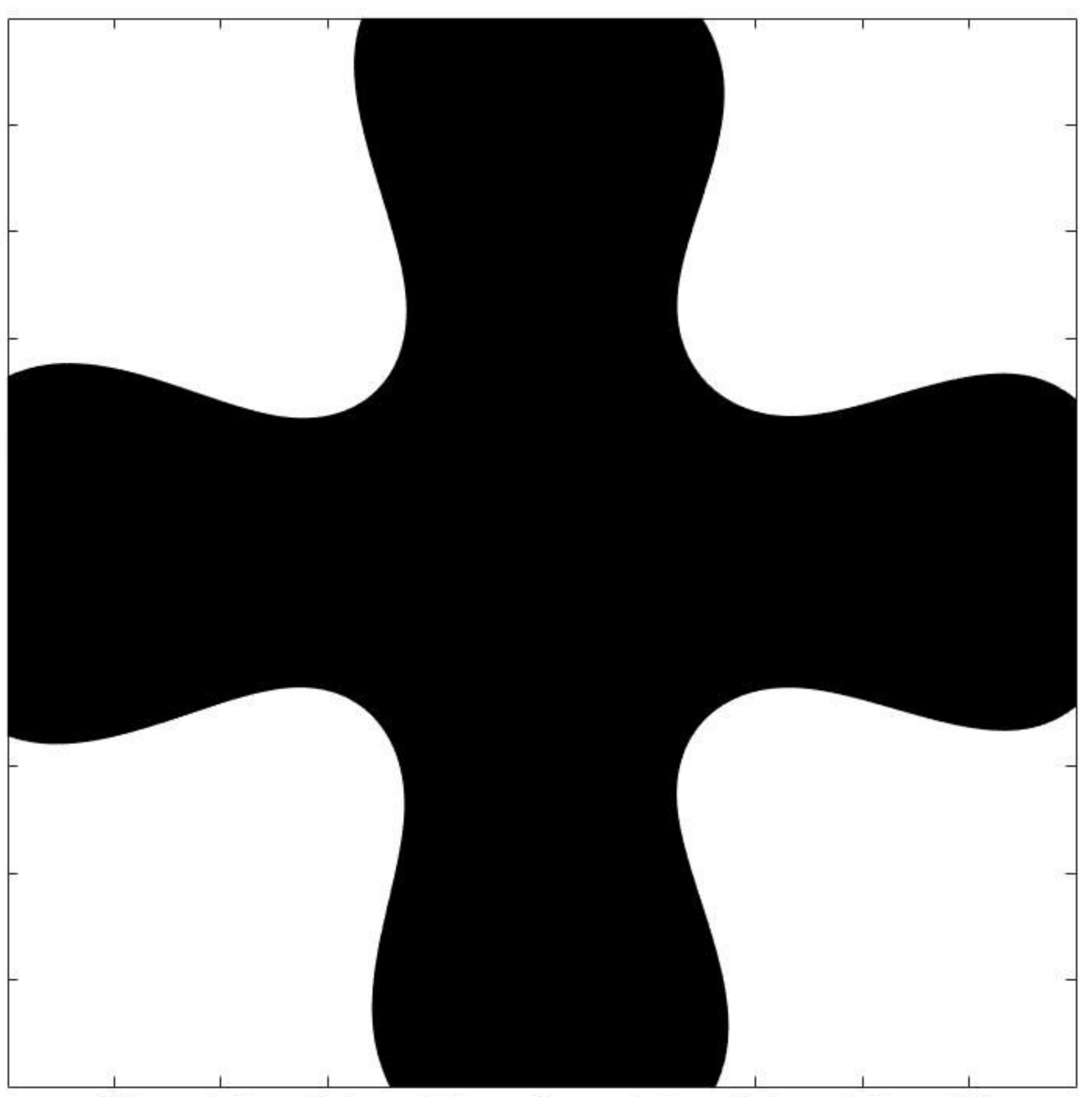}}
     \label{optimal_design}
  \caption{The heat dissipation problem}
  \label{test3_1} 
\end{figure}

\commentout{
The filled sub-domain is occupied by the high conductivity material, which has to find the shortest heat dissipation channels from the center to four edges.
}



\commentout{
\section{Conclusion}

We develop a shape optimization algorithm for solving interface inverse problems based on the IFE method. The proposed algorithm is suitable for inverse geometric problems in the sense that it is based on interface independent meshes. On such interface independent meshes, we build a $H^1$ \textit{velocity field} by establishing a explicit function relating the design variables and spacial variables. And the \textit{velocity field} vanishes on all the non-interface elements, which results in efficient computation of sensitivities. Also, we compute the shape derivatives of the coefficients in the IFE basis functions analytically. Then we combine these results with a discretized adjoint method to evalute the sensitivities. The proposed algorithm is efficient and accurate for inverse geometric problems because (1) no mesh regeneration procedure is needed and the \textit{velocity field} can be analytically established; (2) the forward solvers based on the IFE method have optimal accuracy; and (3) the sensitivity can be analytically computed on the discretization level and almost all the computation only need to be carried on interface elements.

For applications, we consider three problems: the output-least-squares problem where the data is available on the whole or portion of $\Omega$; the Dirichlet-Neumann problem where the data is only available on $\partial\Omega$; and a real optimal design problem for heat dissipation. We formulate these three inverse problems as interface optimization problems with different shape objective functionals. It turns out our method can be easily applied to different objective functionals where the only difference is the partial derivatives of the objective functions with respect to the IFE solution vector $\mathbf{ u}_h$, which can be also explicitly formulated in these three examples.

The numerical results turn out to be very promising. In general, our method can handle large shape change without any issue about the mesh distortion since the interface independent mesh is used. And we observe that the method can quickly find the location or the outline of the true interface during the first several iterations, which suggests that our sensitivity computation based the IFE discretization and fixed mesh is very accurate. Specifically, in the first problem, the reconstruction is very good even for interface with complicated geometry and multiple interface curves. Our other numerical experiments also suggest the reconstruction is successful for both large and small contrast of coefficients $\beta$. In the second problem, we carry out the numerical experiments on both convex and non-convex inclusions $\Omega^1$. The reconstruction of the convex one is clearly better than the non-convex ones, which agrees with the theoretical analysis in the literature \cite{1994BarcelFabes,1990IsakovPowell}. And the reconstruction for a 1-D interface with Neumann data only available on a portion of $\Omega$ is still successful. Besides, for the Dirichlet-Neumann problems, we have to uniformly refine the mesh to guarantee better reconstruction. The similar technique is also used in \cite{2015GiacominiPantz} to improve the reconstruction. Finally in the last example, for a bench mark problem from \cite{2007Pedregal} about minimizing the heat dissipation on $\Omega$, we obtain very similar design as in that article.
}

\bibliographystyle{plain}

\end{document}